\newtheorem{theorem}{Theorem}
\newtheorem{lemma}[theorem]{Lemma}
\newtheorem{definition}[theorem]{Definition}
\newtheorem{remark}[theorem]{Remark}
\numberwithin{theorem}{section}
\numberwithin{equation}{section}
\newcommand{\mint}{- \mskip-19,5mu \int}
\def\N{\mathbb{N}}
\def\R{\mathbb{R}}
\newcommand{\A}[0]{\ensuremath{\mathbf{A}(x,t,u,D\power um)}}
\newcommand{\dxdt}{\,\mathrm{d}x\mathrm{d}t}
\renewcommand{\d}{\,\mathrm{d}}
\newcommand{\dx}{\,\mathrm{d}x}
\newcommand{\dt}{\,\mathrm{d}t}
\renewcommand{\epsilon}{\varepsilon}
\DeclareMathOperator{\Div}{div}
\renewcommand{\epsilon}{\varepsilon}
\newcommand{\eps}{\varepsilon}
\renewcommand{\rho}{\varrho}
\def\eqn#1$$#2$${\begin{equation}\label#1#2\end{equation}}
 \newcommand{\power}[2]{\bm{#1^{\mbox{\unboldmath{\scriptsize$#2$}}}}}
 \newcommand{\abs}[1]{|#1|}
\def\Xint#1{\mathchoice
    {\XXint\displaystyle\textstyle{#1}}%
    {\XXint\textstyle\scriptstyle{#1}}%
    {\XXint\scriptstyle\scriptscriptstyle{#1}}%
    {\XXint\scriptscriptstyle\scriptscriptstyle{#1}}%
    \!\int}
\def\XXint#1#2#3{\setbox0=\hbox{$#1{#2#3}{\int}$}
    \vcenter{\hbox{$#2#3$}}\kern-0.5\wd0}
\def\bint{\Xint-}
\def\dashint{\Xint{\raise4pt\hbox to7pt{\hrulefill}}}
\def\Xiint#1{\mathchoice
    {\XXiint\displaystyle\textstyle{#1}}%
    {\XXiint\textstyle\scriptstyle{#1}}%
    {\XXiint\scriptstyle\scriptscriptstyle{#1}}%
    {\XXiint\scriptscriptstyle\scriptscriptstyle{#1}}%
    \!\iint}
\def\XXiint#1#2#3{\setbox0=\hbox{$#1{#2#3}{\iint}$}
    \vcenter{\hbox{$#2#3$}}\kern-0.5\wd0}
\def\biint{\Xiint{-\!-}}
\def\<{\langle}
\def\>{\rangle}
\def\Tilde{\widetilde}
\subjclass[2010]{35B65, 35K67, 35K86, 47J20}
\keywords{Porous medium equation, obstacle problem, higher integrability}
\begin{document}
\title[Higher integrability in the obstacle problem for the fast diffusion equation]{Higher integrability in the obstacle problem for the\\ fast diffusion equation}
\date{\today}

\author[Y. Cho]{Yumi Cho}
\address{Yumi Cho\\
Department of Mathematical Sciences, Seoul National University\\
08826 Seoul, Republic of Korea}
\email{imuy31@snu.ac.kr}

\author[C. Scheven]{Christoph Scheven}
\address{Christoph Scheven\\ Fakult\"at f\"ur Mathematik, 
Universit\"at Duisburg-Essen\\45117 Essen, Germany}
\email{christoph.scheven@uni-due.de}

\thanks{Y. Cho was supported by NRF-2019R1I1A1A01064053.}

\maketitle


\begin{abstract}
 We prove local higher integrability of the spatial
 gradient for solutions to obstacle problems of porous medium type in
 the fast diffusion case $m<1$. The result holds for the natural range
 of exponents that is known from other regularity results for
 porous medium type equations. We also
 cover the case of signed solutions. 
\end{abstract}

\section{Introduction}
In this paper we consider the obstacle problem associated with equations
of porous medium type, whose prototype is given by 
\begin{equation}\label{FDE}
  \partial_t u - \Delta\big(|u|^{m-1}u\big) = 0
  \quad\mbox{in }\Omega_T:=\Omega\times(0,T),
\end{equation}
with possibly signed solutions $u:\Omega_T\to\R$.
Here, we study the singular case $\frac{(n-2)_+}{n+2}<m<1$,
in which \eqref{FDE} is known as \emph{singular porous medium
  equation} or \emph{fast diffusion equation}.
In this case, the behaviour of solutions to~\eqref{FDE} differs
considerably from the behaviour in the case $m>1$.
For example, in the case $m<1$
solutions exhibit infinite propagation speed and we may observe extinction in finite time.
This is in contrast to the case $m>1$ of slow diffusion, in which 
disturbances propagate with finite speed and the solution
might vanish outside of a compact subset of the spatial domain.

Here, we consider a large class of equations that generalize the fast diffusion
equation~\eqref{FDE}. 
More precisely, we deal
with obstacle problems related to equations of the kind
\begin{equation}\label{por-med-eq}
  \partial_t u -\Div \mathbf A\big(x,t,u,D(|u|^{m-1}u)\big) =g-\Div F
  \quad\mbox{in $\Omega_T$}, 
\end{equation}
where the vector field $\mathbf
A\big(x,t,u,\xi):\Omega_T\times\R\times\R^n\to\R^n$ satisfies ellipticity and
growth conditions comparable to the model case $A(\xi)=\xi$, which
corresponds to~\eqref{FDE}.
For the obstacle problem, we additionally consider a constraint of the
form $u\ge\psi$ a.e. in $\Omega_T$ for a given obstacle function
$\psi:\Omega_T\to\R$, and replace \eqref{por-med-eq} by an associated
variational inequality. 
For the precise assumptions we refer to 
Section \ref{sec:result} below.
Our notion of solution includes the regularity property
$D(|u|^{m-1}u)\in L^2(\Omega_T,\R^n)$, which is natural in view of
existence results, cf. \cite{BLS:MathAnn05,AltLuckhaus}. The objective of the
present paper is to establish the self-improving property of higher
integrability for the spatial gradient, i.e. to prove that every
solution to the obstacle problem satisfies
$D(|u|^{m-1}u)\in L^{2+\eps}_{\mathrm{loc}}(\Omega_T,\R^n)$ for some $\eps>0$.

The question for higher integrability of solutions to partial
differential equations has a long history.
In the case of elliptic systems, the first result of this type goes
back to the work \cite{Meyers-Elcrat} by  Elcrat \& Meyers, which in
turn is based on the
ground-breaking work of Gehring \cite{Gehring}. For further results in
the elliptic setting 
and their applications to regularity theory, we refer to the monographs 
\cite[Chapter~V, Theorem 2.1]{Giaquinta:book} and \cite[Section
6.4]{Giusti:book}. The case of elliptic obstacle problems has been
considered in \cite{Eleuteri:2007,EleuteriHabermann:2008}.
The first parabolic analogue was established by Giaquinta \&
Struwe \cite[Theorem 2.1]{Giaquinta-Struwe}, who settled the question
of higher integrability in the case of 
parabolic systems with quadratic growth. For the more general case of
parabolic systems of
$p$-Laplace type with $p\neq2$, however, the solution of the
corresponding problem turned out to be much more involved and stayed
open for some time. The additional issue in this case is caused by the
anisotropic scaling behaviour of the parabolic $p$-Laplace operator,
which leads to inhomogeneous estimates and therefore prevents the
application of the classical Gehring lemma. This problem was 
eventually overcome by Kinnunen \& Lewis
\cite{Kinnunen-Lewis:1}, who verified the property of higher integrability for the spatial gradient of
solutions to parabolic $p$-Laplace systems in the range $p>\frac{2n}{n+2}$. 
As a key tool to compensate the anisotropic nature of the equation,
they used the idea of intrinsic parabolic cylinders by
DiBenedetto. For an exposition of this idea of intrinsic geometry and further applications,
we refer to the monographs \cite{DiBe,DBGV-book}. The breakthrough by
Kinnunen \& Lewis initiated various related research activities,
cf. \cite{AdimurthiByun:2018, Boegelein:1, Boegelein:2,Boegelein-Duzaar:1,
  Boegelein-Parviainen, Kilpelainen-Koskela, Parviainen, Parviainen-singular}. In particular, the
case of obstacle problems of parabolic $p$-Laplace type has been
treated in \cite{Boegelein-Scheven-obs}.  

For the porous medium equation, the problem of higher integrability of
the spatial gradient poses even more challenges than in the case of
the parabolic $p$-Laplace equation. This stems from the fact that the
modulus of ellipticity of the porous medium operator degenerates when
$|u|$ becomes small, while the degeneracy of the $p$-Laplacian depends
on the size of the gradient $|Du|$.
The type of degeneracy of the porous medium equation 
makes it considerably harder to prove gradient estimates. On a
technical level, the problem is to construct a system of intrinsic
cylinders that compensates the possible degeneracy in $|u|$ and at the
same time, takes into account the size of the spatial gradient. 
This major problem was eventually solved by Gianazza \& Schwarzacher
\cite{Gianazza-Schwarzacher} who established the self-improving
property of higher integrability for the spatial gradient of
non-negative solutions to the porous medium equation in the slow
diffusion range $m>1$. Their method relies on the phenomenon of expansion of
positivity and is therefore limited to the scalar case and to
non-negative solutions.
The extension of the result to signed solutions and to
systems of porous medium type 
is due to \cite{Bogelein-Duzaar-Korte-Scheven}.
In the fast diffusion case $m<1$, the different behaviour
of the singular porous medium equation requires non-trivial
adaptations of the techniques from \cite{Gianazza-Schwarzacher}. This was
carried out for non-negative solutions in
\cite{Gianazza-Schwarzacher:m<1} and for the vectorial case in
\cite{BDS-singular-higher-int}.
The techniques that have been developed in the mentioned works also
proved to be useful for several related problems. 
For example, in \cite{BDKS:2018,Saari-Schwarzacher}
the authors adapted these techniques
to prove higher integrability for a certain class of 
doubly nonlinear equations, which include both the degeneracy of the
porous medium equation and of the parabolic $p$-Laplacian.
For the degenerate porous medium equation, the
higher integrability was extended up to the boundary in \cite{MSSS}
and to obstacle problems in \cite{Cho-Scheven}. The present work is
devoted to the question whether the result of the latter work also
applies in the singular case of the fast diffusion equation.

After finalizing this paper we became aware that independently of us,
a partial answer to this question has been given in
\cite{QifanLi}. However, the approach in \cite{QifanLi} is limited to
solutions that are non-negative and bounded. Therefore, the author has
to assume very strong regularity properties of the obstacle such as
Lipschitz continuity and boundedness of the time derivative of a
certain power of the solution. On the contrary, we
are able to treat signed solutions that may be unbounded and can deal
with general obstacle functions in the natural energy space.

We conclude the introduction with some remarks on the strategy of the proof. As
mentioned above, the identification of a suitable intrinsic geometry
plays a crucial role in the arguments. Since our goal is the estimate of
the spatial gradient $D(|u|^{m-1}u)$, we consider a
parameter $\theta^m$ that is related to the quantity
$\frac{|u|^m}r$,which has the same scaling behaviour as
$|D(|u|^{m-1}u)|$. Then we work with cylinders of the type
\begin{equation}\label{ratio-cylinders}
  Q_{r,s}:=B_{r}(x_o)\times(t_o-s,t_o+s)
  \qquad\mbox{with}\qquad
  \frac{s}{r^{\frac{1+m}{m}}}=\theta^{1-m}.
\end{equation}
Heuristically, the last identity means that $s\approx|u|^{1-m}r^2$. 
This form of the cylinders reflects the fact that the modulus of
ellipticity of the porous medium equation is proportional to
$|u|^{m-1}$ and at the same time, the cylinders are adapted to the derivation of
gradient estimates. 
The heuristical idea that $\theta^m$ corresponds to the size of
$\frac{|u|^m}{r}$ is made precise by the requirement
\begin{equation}\label{intro:intrinsic-cylinder}
  \theta^{1+m}
  \le
  B\biint_{Q_{r,s}}\frac{|u|^{1+m}}{r^{\frac{1+m}{m}}}\dx\dt
  \le
  B^2\theta^{1+m},
\end{equation}
for a constant $B\ge1$. 
If $Q_{r,s}$ satisfies this condition, we say that the cylinder is
\emph{intrinsic}. However, it turns out that this property is too strong
to be achieved in every case. 
A decisive idea by Gianazza \& Schwarzacher \cite{Gianazza-Schwarzacher}
is therefore to relax this assumption and to work with \emph{sub-intrinsic
  cylinders},  which means that only the second inequality in
\eqref{intro:intrinsic-cylinder} holds true. In order to obtain the
missing bound of $\theta$
from above, we consider two cases: the \emph{singular regime}
that is characterized by the fact that the solution is small in terms
of its oscillation, more precisely
\begin{equation}\label{intro:singular}
  \theta^{2m}
  \le
  B\biint_{Q_{r,s}}\big(\big|D(|u|^{m-1}u)\big|^2+G\big)\dx,
\end{equation}
where $G\in L^{1+\eps}(\Omega_T)$ is a function that depends on the data, and the
\emph{non-singular regime}, in which the construction of an intrinsic
cylinder with~\eqref{intro:intrinsic-cylinder} is possible.

For both cases, we establish a reverse H\"older inequality on the
respective cylinders, which is the prerequisite for any higher
integrability result. The usual ingredients for the proof of the
reverse H\"older inequality are an energy estimate and a parabolic
Sobolev-Poincar\'e inequality, cf. Section~\ref{sec:energy-poincare}.
In particular the second one is very involved in the case of the
obstacle problem. In order to control the oscillation in time
direction in the absence of a weak time derivative, one needs to
control the difference of mean values over different time
slices. However, for a solution to the obstacle problem, it is difficult to
obtain any information on the set where the solution is close to the
obstacle. This problem leads to additional integrals over the set
$\{|u|^{m-1}u\le|\psi|^{m-1}\psi+\mu^m\}$, where $\mu>0$ is an
arbitrary parameter (cf. the proof of
Lemma~\ref{lem:time-diff}). These additional integrals have to be
dealt with by choosing the parameter $\mu$ carefully in order to
compensate the inhomogeneity of the estimates. By distinguishing
between the singular and the non-singular case, we thereby obtain the
desired Sobolev-Poincar\'e inequality, see
Lemma~\ref{lem:sobolev-poin}, and finally the reverse H\"older
inequality in Lemma~\ref{lem:reverseHolder}.

This inequality is then exploited on a suitable system of
sub-intrinsic cylinders that satisfy either
\eqref{intro:intrinsic-cylinder} or
\eqref{intro:intrinsic-cylinder}$_2$, \eqref{intro:singular}.
For the details of the construction, which originates from
\cite{Gianazza-Schwarzacher,BDS-singular-higher-int}, we refer to
Section~\ref{sec:hi}. By covering the super-level sets of the gradient
with these cylinders and applying the reverse H\"older inequality on
each of them, we can then derive the desired higher integrability
estimate along the same lines as in the case of the
parabolic $p$-Laplacian.  

Both for the derivation of the Sobolev-Poincar\'e inequality and for
the construction of the sub-intrinsic cylinders, we rely on the lower bound
\begin{equation*}
  m>\frac{(n-2)_+}{n+2}=:m_c.
\end{equation*}
This restriction of the exponent is natural and appears also in other
regularity results, see \cite[Section 6.21.6]{DBGV-book} for a
discussion of the critical exponent $m_c$. This exponent also occurs
naturally
as a borderline case in a parabolic Sobolev embedding that is related to
our setting, see~\eqref{integrability-Kpsi} below.

\section{Notation and the main result}

Before stating our main result, we introduce some notations to be used
throughout the paper.

\subsection{Notation}
As short-hand notation for the power of a signed value $a\in \R$ we define
$$\power ak := |a|^{k-1}a \quad\mbox{for $k\in(0,\infty)$}.$$
We use the following parabolic cylinders:
\begin{equation}\label{def-Q}
	Q_\rho^{(\theta)}(z_o)
	:=
	B_\rho^{(\theta)}(x_o)\times\Lambda_\rho(t_o)
\end{equation}
for $z_o=(x_o,t_o)\in\R^n\times(0,T)$, $n\in\N$, and for $\rho,\theta>0$,
where  
\begin{align*}
B_\rho^{(\theta)}(x_o):=B_{\rho\theta^\frac{m(m-1)}{1+m}}(x_o)
=\big\{x\in\R^n:|x-x_o|<\rho\theta^\frac{m(m-1)}{1+m}\big\}
\end{align*}
and
\begin{equation*}
  \Lambda_\rho(t_o)
  :=
  \big(t_o-\rho^{\frac{1+m}{m}},t_o+\rho^{\frac{1+m}{m}}\big).
\end{equation*}
We note that the cylinders $Q_\rho^{(\theta)}(z_o)$ are of the
type~\eqref{ratio-cylinders} mentioned in the introduction with
$r=\rho\theta^{\frac{m(m-1)}{1+m}} $ and $s=\rho^{\frac{1+m}m}$.
If $\theta=1$, we simply write
\begin{align*}
B_\rho(x_o):=B_\rho^{(1)}(x_o)
\quad\mbox{and}\quad
Q_\rho(z_o):=B_\rho^{(1)}(x_o)\times\Lambda_\rho(t_o).
\end{align*}
Moreover, we define the boundary term
\begin{align}\label{def:b}
	\mathfrak b[\power{u}{m},\power{v}{m}]
	:=
	\tfrac{m}{1+m}\big(\abs{v}^{1+m} - \abs{u}^{1+m}\big) - u\big(\power{v}{m} - \power{u}{m}\big)
\end{align}
for $u,v\in\R$. 
This has the property
\begin{align}\label{boundary-term}
\frac1{c}\big|\power{u}{\frac{1+m}{2}}-\power{v}{\frac{1+m}{2}}\big|^2 
	\le
	\mathfrak b[\power{u}{m},\power{v}{m}]
	\le
	c \big|\power{u}{\frac{1+m}{2}}-\power{v}{\frac{1+m}{2}}\big|^2
\end{align}
for some constant $c=c(m)>0$, see \cite[Lemma 3.4]{BDKS:2018}.
We also define the slicewise mean  $\<h\>_B(t)$ of $h$ over a
measurable set $B\subset\Omega\subset\R^n$ for a.e. $t\in(0,T)$ by
\begin{equation}\label{slice-mean}
	\<h\>_{B}(t)
	:=
	\bint_{B} h(t)\dx,
\end{equation}
while  the mean value $(h)_{D}\in \R$ of $h$ over a measurable set
$D\subset\Omega\times(0,T)$ in space-time is given by
\begin{equation*}
	(h)_{D}
	:=
	\biint_{D} h \dxdt,
\end{equation*}
provided $h\in L^1(0,T;L^1(\Omega,\R))$.
We introduce the mollification in time 
\begin{align}\label{time-molli}
\llbracket w \rrbracket_h(x,t):=\frac1h\int_0^t e^\frac{s-t}h w(x,s)\d s,
\quad h>0
\end{align}
for $w\in L^1(0,T;L^1(\Omega,\R))$ and a.e. $(x,t)\in\Omega_T$.

\subsection{Assumptions and statement of the result}
\label{sec:result}
On an arbitrary domain $\Omega\subset\R^n$, where $n\in\N$,
we deal with the obstacle problem related to
the singular porous medium equation with an exponent $m$
in the range
\begin{equation}
  \label{range-m}
  \frac{(n-2)_+}{n+2}<m\le1.
\end{equation}
Moreover, we consider an obstacle function $\psi : \Omega_T \rightarrow \R$ with
\begin{equation}\label{obs-func}
\power \psi m \in L^2(0,T; W^{1,2}(\Omega))\quad\mbox{and} \quad \partial_t \power \psi m \in L^{\frac{1+m}{m}}(\Omega_T),
\end{equation}
and inhomogeneities
\begin{align}\label{inhom-func}
g\in L^{1+m}(\Omega_T,\R)
\quad\mbox{and}\quad
F\in L^2(\Omega_T,\R^n).
\end{align}
Note that \eqref{obs-func} implies $\psi\in
C^0\big([0,T];L^{1+m}(\Omega)\big)$, since $m\ge\frac{n-2}{n+2}$.
We then define the class of functions
\begin{align*}
	K_{\psi}
	:=
	\big\{v \in C^0\big([0,T];L^{1+m}(\Omega)\big):
	\power v m\in L^2\big(0,T;W^{1,2}(\Omega)\big), \
	v\ge\psi\mbox{ a.e. in $\Omega_T$}\big\}
\end{align*}
and 
$$
	K_{\psi}'
	:=
	\big\{v\in K_{\psi}: \partial_t \power v m\in L^{\frac{1+m}{m}}(\Omega_T)\big\}.
$$
In addition, we assume that the vector field $\mathbf A=\mathbf
A(x,t,u,\xi)\colon \Omega_T\times \R\times\R^{n}\to\R^{n}$ is
measurable in $(x,t)\in\Omega_T$ for every $(u,\xi)\in\R\times\R^n$,
continuous in $(u,\xi)\in\R\times\R^n$ for a.e. $(x,t)\in\Omega_T$, and
satisfies the following growth and ellipticity conditions with some constants $0<\nu\le L<\infty$:
\begin{equation}\label{A-struc}
\left\{ \begin{array}{l}
|\mathbf{A}(x,t,u,\xi)| \leq L |\xi|,\\[0.6ex]
\mathbf{A}(x,t,u,\xi)\cdot \xi \geq \nu |\xi|^2,
\end{array}\right.\end{equation}
for a.e. $(x,t)\in\Omega_T$ and for every $(u,\xi)\in\R\times\R^n$.
Under the above assumptions, we consider local weak solutions in the
following sense.

\begin{definition}
  We say that a function $u\in K_\psi$ is a local weak solution of the
  obstacle problem related to the equation \eqref{por-med-eq} if it
  satisfies the variational inequality
  \begin{align}\label{vari-ineq}
    &\langle\!\langle\partial_tu,\alpha\eta(\power vm-\power um)\rangle\!\rangle
    +
    \iint_{\Omega_T}\alpha \A\cdot D\big(\eta(\power vm-\power um)\big)\dxdt \\\nonumber
    &\hspace{2.7cm}
    \ge \iint_{\Omega_T} \alpha \big[\eta g (\power vm-\power um)+F \cdot D\big(\eta(\power
    vm-\power um)\big) \big] \dxdt
  \end{align}
  for every comparison function $v\in K'_{\psi}$\,, any cut-off
  function $\alpha\in W^{1,\infty}_0([0,T],\R_{\ge0})$ in time, and
  any cut-off function $\eta\in W^{1,\infty}_0(\Omega,\R_{\ge0})$ in
  space, where
  \begin{equation}\label{weak-time-term}
    \langle\!\langle\partial_t u,\alpha\eta (\power vm-\power um)\rangle\!\rangle
    :=
    \iint_{\Omega_T} \eta \big\{ \alpha'\big(\tfrac1{1+m}|u|^{1+m}-u\power{v}{m}\big)
    -\alpha u\partial_t \power{v}{m} \big\}\dxdt.
  \end{equation}
\end{definition}

An existence result for solutions in this sense has been established
in \cite{BLS:MathAnn05}.

We now present our main result.
\begin{theorem}\label{thm:main}
Let $\frac{(n-2)_+}{n+2}<m\le1$ and let $u\in K_\psi$ be a local weak
solution to the variational inequality \eqref{vari-ineq}.
Then there exist constants $\tau_o=\tau_o(n,m,\nu,L)\in(0,1]$ and $c=c(n,m,\nu,L)\ge1$ such that if
\begin{align}\label{def-G}
G:=|F|^2+|g|^{1+m}+|D\power\psi m|^2 + |\partial_t\power \psi m|^\frac{1+m}m \in L^{1+\frac\gamma2}_{\rm loc}(\Omega_T)
\end{align}
for some $\gamma>0$,
then we have
\begin{align*}
|D\power um|\in L^{2+\tau_1}_{\rm loc}(\Omega_T) 
\quad\mbox{with $\tau_1:=\min\{\tau_o,\gamma, \frac{4m}{1+m}+\frac4n-2\}$}.
\end{align*}
Moreover, we have the estimate
\begin{align}\label{desired-est-1}
&\biint_{Q_{R}(z)}|D\power um|^{2+\tau}\dxdt\nonumber\\
&\qquad\le
c\Bigg[ \biint_{Q_{2R}(z)} \bigg(\frac{|u|^{1+m}}{R^\frac{1+m}m}+G+1\bigg)\dxdt\Bigg]^\frac{d\tau }{2} \biint_{Q_{2R}(z)}|D\power um|^2\dxdt\nonumber\\
&\qquad\qquad+
c\,\biint_{Q_{2R}(z)}(G+|u|^{1+m})^{1+\frac\tau2}\dxdt
\end{align}
for every $\tau\in(0,\tau_1]$ and for every cylinder $Q_{2R}(z)\subset\Omega_T$ with $0<R\le1$,
where the scaling deficit $d$ is given by
\begin{align}\label{def-d}
d:=\frac{2(1+m)}{2(1+m)-n(1-m)}.
\end{align}
 \end{theorem}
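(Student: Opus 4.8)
The plan is to follow the by-now-standard Kinnunen--Lewis strategy, adapted to the intrinsic geometry~\eqref{ratio-cylinders} dictated by the fast diffusion operator. The starting point is the reverse H\"older inequality from Lemma~\ref{lem:reverseHolder}, which must be available on a family of sub-intrinsic cylinders that satisfy either the intrinsic condition~\eqref{intro:intrinsic-cylinder} or its relaxation~\eqref{intro:intrinsic-cylinder}$_2$ together with the singular alternative~\eqref{intro:singular}. First I would fix a reference cylinder $Q_{2R}(z)\subset\Omega_T$ with $0<R\le1$, and introduce the quantity $\lambda_0$ measuring the relevant mean over $Q_{2R}(z)$, namely something comparable to the right-hand side of~\eqref{desired-est-1} divided by the gradient integral, so that $\lambda_0$ controls the natural energy. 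For levels $\lambda>\lambda_0$ one performs a stopping-time/Vitali argument: around each point of the super-level set $\{|D\power um|>\lambda\}$ one selects the largest sub-intrinsic cylinder on which the mean of $|D\power um|^2+G+|u|^{1+m}/R^{(1+m)/m}+1$ equals $\lambda^2$ (or the appropriate power), using continuity and monotonicity of these means in the radius together with the critical-exponent bound $m>m_c$ to guarantee existence of such a stopping radius; this is precisely the construction from~\cite{Gianazza-Schwarzacher,BDS-singular-higher-int} referenced in Section~\ref{sec:hi}.

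The heart of the argument is then local: on each selected cylinder $Q$ one applies Lemma~\ref{lem:reverseHolder} to obtain
\begin{align*}
  \biint_{Q}|D\power um|^2\dxdt
  \le
  c\,\lambda^{2-q}\biint_{2Q}|D\power um|^q\dxdt
  +
  c\biint_{2Q}(G+|u|^{1+m}+1)\dxdt
\end{align*}
for some $q<2$, where the factor $\lambda^{2-q}$ carries the scaling deficit $d$ from~\eqref{def-d} because the cylinders are anisotropic. Summing this over the covering, splitting the right-hand integrals according to whether the integrand exceeds a fixed fraction of $\lambda$, and using the finite-overlap property of the Vitali cover, one arrives at a good-$\lambda$ inequality of the form
\begin{align*}
  \int_{\{|D\power um|>\lambda\}\cap Q_R}|D\power um|^2\dxdt
  \le
  c\,\lambda^{2-q}\!\!\int_{\{|D\power um|>c^{-1}\lambda\}}\!\!|D\power um|^q\dxdt
  +
  c\!\!\int_{\{H>c^{-1}\lambda\}}\!\!H\dxdt,
\end{align*}
where $H:=G+|u|^{1+m}+1$. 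This is the point where the scaling deficit $d$ enters the final exponent: the deficit forces one to interpolate and then absorb, which is legitimate only up to the threshold $\tau_o$, and the term $|u|^{1+m}$ contributes the structural ceiling $\frac{4m}{1+m}+\frac4n-2$ on admissible exponents, while $\gamma$ is the ceiling coming from the integrability~\eqref{def-G} of the data. Hence $\tau_1=\min\{\tau_o,\gamma,\frac{4m}{1+m}+\frac4n-2\}$.

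Finally, I would multiply the good-$\lambda$ inequality by $\lambda^{\tau-1}$ and integrate over $\lambda\in(\lambda_0,\infty)$ using Fubini, as in the classical Gehring-type lemma; this converts the good-$\lambda$ inequality into the bound
\begin{align*}
  \biint_{Q_R(z)}|D\power um|^{2+\tau}\dxdt
  \le
  \varepsilon\biint_{Q_{2R}(z)}|D\power um|^{2+\tau}\dxdt
  +
  c(\varepsilon)\big(\text{lower-order terms}\big),
\end{align*}
after which a standard iteration lemma on concentric cylinders absorbs the first term and yields~\eqref{desired-est-1}, with the factor $\lambda_0^{d\tau/2}$ reassembled into the bracketed mean of $\frac{|u|^{1+m}}{R^{(1+m)/m}}+G+1$ raised to the power $\frac{d\tau}{2}$. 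One should be careful that all estimates are local and that the passage from super-level sets back to averages over $Q_{2R}$ respects the intrinsic scaling; the truncation of $|u|^{1+m}$ and the handling of the obstacle terms hidden inside Lemma~\ref{lem:reverseHolder} already took care of the signed and unbounded case.

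I expect the main obstacle to be the stopping-time construction of the sub-intrinsic cylinders in the singular regime: one must ensure that a cylinder on which the intrinsic balance~\eqref{intro:intrinsic-cylinder} or the singular condition~\eqref{intro:singular} holds can always be found below a given scale, that the two alternatives can be combined coherently across the Vitali cover, and that the resulting deficit stays uniformly bounded by $d$. This is exactly where the restriction $m>m_c$ is used, both to make the relevant maps in the radius continuous with the right monotonicity and to keep the Sobolev exponent in~\eqref{integrability-Kpsi} subcritical; verifying these properties uniformly over the cover, while tracking the parameter $B$, is the delicate part of the proof.
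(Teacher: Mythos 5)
Your plan follows essentially the same route as the paper's Section~\ref{sec:hi}: the intrinsic parameter $\lambda_o$, the stopping-time selection of sub-intrinsic cylinders satisfying either \eqref{Q-intrinsic-1} or \eqref{Q-intrinsic-2}, the reverse H\"older inequality of Lemma~\ref{lem:reverseHolder} on each cylinder, a Vitali covering leading to a level-set inequality, and the Fubini/iteration argument, with the three ceilings $\tau_o$, $\gamma$ and $\frac{4m}{1+m}+\frac4n-2$ arising exactly as you indicate. The one technical point to state correctly is that the Fubini step must be carried out for the truncated gradient $|D\power um|_\ell$ (not a truncation of $|u|^{1+m}$) over a finite range of levels, followed by Fatou's lemma, since the finiteness of the $(2+\tau)$-integral is not known a priori and integrating directly up to $\lambda=\infty$ would be circular.
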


\begin{remark}\upshape
 The appearance of the scaling deficit $d$ in the above estimate is a
 natural consequence of the anisotropic scaling behaviour of the
 porous medium equation. We note that $d=1$ in the case $m=1$, which
 corresponds to the linear case of the heat equation. Moreover, the
 scaling deficit tends to infinity if $m$ approaches the critical
 exponent $\frac{n-2}{n+2}$.  
\end{remark}

\section{Auxiliary material}

\subsection{An iteration lemma}
We start with an elementary lemma that follows from \cite[Lemma~6.1]{Giusti:book} by a change of variables.

\begin{lemma}\label{lem:tech}
Let $0<\vartheta<1$, $A,C\ge 0$ and $\alpha,\beta > 0$. Then, there exists a constant  $c = c(\beta,\vartheta)$ such that there holds: For any $0<r<\rho$ and any non-negative bounded function $\phi\colon [r,\rho]\to\R_{\ge 0}$ satisfying
\begin{equation*}
	\phi(t)
	\le
	\vartheta \phi(s) + A\big (s^\alpha-t^\alpha\big)^{-\beta} + C
	\qquad \text{for all $r\le t<s\le \varrho$.}
\end{equation*}
we have
\begin{equation*}
	\phi(r)
	\le
	c \big[A (\varrho^\alpha - r^\alpha)^{-\beta} + C\big].
\end{equation*}
\end{lemma}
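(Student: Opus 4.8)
The plan is to deduce the estimate from the classical iteration lemma \cite[Lemma~6.1]{Giusti:book} by the substitution $y=t^\alpha$. Concretely, I would introduce the auxiliary function
$$
\widetilde\phi(y):=\phi\big(y^{1/\alpha}\big)\qquad\text{for }y\in[r^\alpha,\varrho^\alpha],
$$
which is non-negative and bounded because $\phi$ is, noting that $y\mapsto y^{1/\alpha}$ is an increasing bijection of $[r^\alpha,\varrho^\alpha]$ onto $[r,\varrho]$ since $\alpha>0$. For $r^\alpha\le y_1<y_2\le\varrho^\alpha$ one puts $t:=y_1^{1/\alpha}$ and $s:=y_2^{1/\alpha}$, so that $r\le t<s\le\varrho$, and the assumed inequality yields
$$
\widetilde\phi(y_1)=\phi(t)\le\vartheta\,\phi(s)+A\big(s^\alpha-t^\alpha\big)^{-\beta}+C=\vartheta\,\widetilde\phi(y_2)+A\,(y_2-y_1)^{-\beta}+C .
$$
Hence $\widetilde\phi$ satisfies the hypotheses of \cite[Lemma~6.1]{Giusti:book} on $[r^\alpha,\varrho^\alpha]$ with the \emph{same} constants $\vartheta,A,C,\beta$, and that lemma provides $c=c(\beta,\vartheta)$ with $\phi(r)=\widetilde\phi(r^\alpha)\le c\big[A\,(\varrho^\alpha-r^\alpha)^{-\beta}+C\big]$, which is the assertion.

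Should a self-contained argument be preferred, the same bound follows from the usual hole-filling iteration. I would fix $\lambda\in(\vartheta^{1/\beta},1)$, which is possible because $0<\vartheta<1$, set $t_0:=r$ and define $t_{i+1}$ through $t_{i+1}^\alpha:=t_i^\alpha+(1-\lambda)\lambda^i(\varrho^\alpha-r^\alpha)$, so that $t_i\nearrow\varrho$ and $t_{i+1}^\alpha-t_i^\alpha=(1-\lambda)\lambda^i(\varrho^\alpha-r^\alpha)$. Applying the hypothesis with $t=t_i$, $s=t_{i+1}$ and iterating gives, for every $k\in\N$,
$$
\phi(r)\le\vartheta^k\phi(t_k)+A\,(1-\lambda)^{-\beta}(\varrho^\alpha-r^\alpha)^{-\beta}\sum_{i=0}^{k-1}(\vartheta\lambda^{-\beta})^i+C\sum_{i=0}^{k-1}\vartheta^i .
$$
Since $\vartheta\lambda^{-\beta}<1$ and $\vartheta<1$, both geometric series converge; letting $k\to\infty$ and using the boundedness of $\phi$ to discard the term $\vartheta^k\phi(t_k)$, one obtains $\phi(r)\le c\big[A\,(\varrho^\alpha-r^\alpha)^{-\beta}+C\big]$ with $c=c(\beta,\vartheta)$.

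The argument is routine and I do not expect a genuine obstacle; the only points needing (minor) care are the choice of $\lambda$ — it must lie strictly between $\vartheta^{1/\beta}$ and $1$ so that simultaneously $\vartheta\lambda^{-\beta}<1$ (convergence of the series) and $t_i\to\varrho$ — and checking that the substitution $y=t^\alpha$ respects the strict ordering of points and the endpoints of the interval, which is immediate from $\alpha>0$.
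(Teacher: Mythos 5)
Your first argument — the substitution $y=t^\alpha$ reducing the statement to \cite[Lemma~6.1]{Giusti:book} — is exactly how the paper justifies this lemma, and it is carried out correctly; the additional self-contained iteration is also sound. Nothing further is needed.
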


\subsection{Some elementary estimates}
We will frequently use estimates for the differences of powers,
cf. \cite[Lemma 8.3]{Giusti:book}. 

\begin{lemma}\label{lem:a-b}
Let $a,b\in\R^N$ and $\alpha>0$. Then we have that
\begin{align}\label{a-b-1}
 \tfrac1c\big|\power{a}{\alpha} - \power{b}{\alpha}\big|
	\le
	\big(|a| + |b|\big)^{\alpha-1}|a-b|
	\le
	c \big|\power{a}{\alpha} - \power{b}{\alpha}\big|,
	\mbox{ for some $c=c(\alpha)$.}
\end{align}
Moreover, if $0<\alpha<1$, then
\begin{align}\label{a-b-2}
        \big|\power{a}{\alpha} - \power{b}{\alpha}\big|
        \le
        c|a-b|^\alpha.
\end{align}
\end{lemma}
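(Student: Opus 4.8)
The plan is to deduce the two-sided comparison \eqref{a-b-1} from a one-parameter family of elementary scalar inequalities after a normalisation, and to obtain the H\"older-type bound \eqref{a-b-2} by a separate near/far dichotomy.

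\emph{The inequality \eqref{a-b-1}.} Both asserted inequalities are invariant under $(a,b)\mapsto(\lambda a,\lambda b)$ for $\lambda>0$ (all expressions being homogeneous of degree $\alpha$) and symmetric in $a$ and $b$, so we may assume $\abs{a}=1\ge\abs{b}=:r\in[0,1]$; then $\power{a}{\alpha}=a$, the factor $(\abs{a}+\abs{b})^{\alpha-1}$ lies between two positive constants depending only on $\alpha$, and it remains to prove $c^{-1}\abs{a-b}\le\big|a-\power{b}{\alpha}\big|\le c\abs{a-b}$ with $c=c(\alpha)$. With $y:=\tfrac{a\cdot b}{\abs{b}}\in[-1,1]$ and $\power{b}{\alpha}=r^{\alpha-1}b$ — the case $b=0$ being immediate — one computes $\abs{a-b}^2=1-2ry+r^2$ and $\big|a-\power{b}{\alpha}\big|^2=1-2r^\alpha y+r^{2\alpha}$, so the claim reduces to
\begin{equation*}
  c^{-2}\big(1-2ry+r^2\big)\le 1-2r^\alpha y+r^{2\alpha}\le c^2\big(1-2ry+r^2\big)
\end{equation*}
for all $r\in[0,1]$ and $y\in[-1,1]$. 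Since both inequalities are \emph{affine in $y$}, it suffices to check them at $y=\pm1$, where they read $c^{-2}(1-r)^2\le(1-r^\alpha)^2\le c^2(1-r)^2$ and $c^{-2}(1+r)^2\le(1+r^\alpha)^2\le c^2(1+r)^2$. For $0<\alpha\le1$ these follow from the elementary estimates $\alpha(1-r)\le 1-r^\alpha\le 1-r$ and $1+r\le 1+r^\alpha\le c_\alpha(1+r)$ on $[0,1]$: the bound $r^\alpha\ge r$ gives the middle comparisons, monotonicity of $r\mapsto(1-r^\alpha)-\alpha(1-r)$ gives the left one, and $c_\alpha:=\sup_{r\in[0,1]}\tfrac{1+r^\alpha}{1+r}<\infty$ by continuity; the case $\alpha\ge1$ is treated by the analogous chains. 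Undoing the normalisation yields \eqref{a-b-1}.

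\emph{The inequality \eqref{a-b-2}.} Let $0<\alpha<1$, $a\neq b$, and, by symmetry, $\abs{a}\ge\abs{b}$. If $\abs{a-b}\ge\tfrac12\abs{a}$, then, using $\big|\power{x}{\alpha}\big|=\abs{x}^\alpha$, $\big|\power{a}{\alpha}-\power{b}{\alpha}\big|\le\abs{a}^\alpha+\abs{b}^\alpha\le2\abs{a}^\alpha\le 2^{1+\alpha}\abs{a-b}^\alpha$. If $\abs{a-b}<\tfrac12\abs{a}$, then every point $\xi_t:=b+t(a-b)$, $t\in[0,1]$, of the segment from $b$ to $a$ satisfies $\abs{\xi_t}\ge\abs{a}-\abs{a-b}>\tfrac12\abs{a}>0$, so that segment avoids the origin and the fundamental theorem of calculus gives $\power{a}{\alpha}-\power{b}{\alpha}=\int_0^1 D\power{\xi_t}{\alpha}\,(a-b)\dt$. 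Since $D\power{\xi}{\alpha}=\abs{\xi}^{\alpha-1}\big(\mathrm{Id}+(\alpha-1)\tfrac{\xi\otimes\xi}{\abs{\xi}^2}\big)$ has eigenvalues $1$ and $\alpha$, hence operator norm at most $\max\{1,\alpha\}\abs{\xi}^{\alpha-1}$, we get $\big|\power{a}{\alpha}-\power{b}{\alpha}\big|\le c(\alpha)\abs{a-b}\,(\tfrac12\abs{a})^{\alpha-1}\le c(\alpha)\abs{a-b}^\alpha$, the last step because $\abs{a-b}<\tfrac12\abs{a}$ and $\alpha-1<0$.

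The genuinely delicate point is \eqref{a-b-1} near the configuration $b=a$, i.e.\ as $(r,y)\to(1,1)$: there $\big|a-\power{b}{\alpha}\big|^2$ and $\abs{a-b}^2$ vanish simultaneously, so \eqref{a-b-1} is of the indeterminate type $0/0$. The affine-in-$y$ reduction disposes of this uniformly in $r$, since the endpoint inequalities remain (trivially) valid at $r=1$. The more traditional route — expressing $\power{a}{\alpha}-\power{b}{\alpha}$ through the fundamental theorem of calculus along the whole segment $[b,a]$ — would instead require the quantitative two-sided estimate $\int_0^1\abs{b+t(a-b)}^{\alpha-1}\dt\sim(\abs{a}+\abs{b})^{\alpha-1}$, whose proof must control the possible vanishing of $\abs{\xi_t}$ along the segment; the approach above circumvents that.
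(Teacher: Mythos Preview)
The paper does not contain its own proof of this lemma; it merely cites \cite[Lemma~8.3]{Giusti:book} and states the result. Your argument is correct and fully self-contained, which is more than the paper provides.

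Your approach to \eqref{a-b-1} via the scaling normalisation $\abs{a}=1\ge\abs{b}=r$ and the observation that the resulting squared-norm inequalities are affine in the cosine parameter $y$ is clean: reducing to the two endpoint cases $y=\pm1$ turns a vector inequality into the scalar comparisons $(1\pm r)^2\sim(1\pm r^\alpha)^2$, which are straightforward. This sidesteps the usual integral-representation route through $\int_0^1 |D\power{\xi_t}{\alpha}|\,\dt$, which --- as you correctly point out in your closing remarks --- must separately handle the possibility that the segment $[b,a]$ passes near the origin. The standard references (including Giusti) typically take that integral route; your affine-in-$y$ reduction is a genuinely different and arguably more transparent argument in the vector case. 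For \eqref{a-b-2} your near/far dichotomy is the standard technique and is carried out correctly; the computation of $D\power{\xi}{\alpha}$ and its operator norm $\max\{1,\alpha\}\abs{\xi}^{\alpha-1}$ is accurate. Your final paragraph is expository rather than logically necessary, but the point it makes about the indeterminate $0/0$ behaviour near $b=a$ is well taken.
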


For the proof of the following lemma, we refer to \cite[Lemma 3.5]{BDKS:2018}.

\begin{lemma} \label{lem:mvint}
Let $p\ge1$ and $\alpha\ge \frac1 p$. Then there exists a universal constant $c=c(\alpha,p)$ such that if $A\subset B\subset \R^k$, $k\in\N$, are two bounded domains and $u \in L^{\alpha p}(B,\R)$, then there holds for any $s\in\R$
$$
	\mint_B \big|\power{u}{\alpha}-\power{(u)_A}{\alpha}\big|^p \dx 
	\leq 
	\frac{c\,|B|}{|A|} \mint_B \big|\power{u}{\alpha}-\power{s}{\alpha}\big|^p \dx.
$$

\end{lemma}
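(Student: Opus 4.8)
The plan is to prove Lemma~\ref{lem:mvint} as an easy consequence of Lemma~\ref{lem:a-b}, so that the only substantive input is the elementary estimate for differences of powers already recorded. First I would fix $s\in\R$ and use the triangle inequality in the form
\begin{equation*}
  \big|\power{u}{\alpha}-\power{(u)_A}{\alpha}\big|
  \le
  \big|\power{u}{\alpha}-\power{s}{\alpha}\big|
  +
  \big|\power{s}{\alpha}-\power{(u)_A}{\alpha}\big|,
\end{equation*}
raise both sides to the power $p\ge1$ (absorbing a factor $2^{p-1}$ from convexity) and integrate the mean value over $B$. The first term on the right is then already of the desired form, so the task reduces to bounding the constant term
\begin{equation*}
  \big|\power{s}{\alpha}-\power{(u)_A}{\alpha}\big|^p
\end{equation*}
by $\frac{c|B|}{|A|}\,\mint_B\big|\power{u}{\alpha}-\power{s}{\alpha}\big|^p\dx$.

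For that term, the idea is to replace $\power{(u)_A}{\alpha}$ by the average over $A$ of $\power{u}{\alpha}$. Concretely, I would write
\begin{equation*}
  \big|\power{s}{\alpha}-\power{(u)_A}{\alpha}\big|
  \le
  \big|\power{s}{\alpha}-\power{(u)_A}{\alpha}\big|,
\end{equation*}
and estimate, using Lemma~\ref{lem:a-b}\eqref{a-b-1} with exponent $\alpha$ together with Jensen's inequality applied to the convex map $t\mapsto|t|$ and to suitable powers, the quantity $\big|\power{(u)_A}{\alpha}-\mint_A\power{u}{\alpha}\dx\big|$. Indeed, \eqref{a-b-1} gives $\big|\power{a}{\alpha}-\power{b}{\alpha}\big|\le c(|a|+|b|)^{\alpha-1}|a-b|$, which combined with the hypothesis $\alpha\ge\frac1p$ (so that $\alpha p\ge1$ and the relevant integrals are finite) lets one compare $\power{(u)_A}{\alpha}$ with $\mint_A\power{u}{\alpha}\dx$ and bound the difference by $c\,\mint_A\big|\power{u}{\alpha}-\power{s}{\alpha}\big|^p\dx$ after an application of Hölder/Jensen in the exponent $p$. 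Then, since the mean value of $\power{u}{\alpha}-\power{s}{\alpha}$ over $A$ is controlled by its $L^p$-mean over $A$, and that in turn by $\frac{|B|}{|A|}$ times the $L^p$-mean over $B$ (because $A\subset B$ and the integrand is non-negative), one arrives at
\begin{equation*}
  \big|\power{s}{\alpha}-\power{(u)_A}{\alpha}\big|^p
  \le
  \frac{c|B|}{|A|}\,\mint_B\big|\power{u}{\alpha}-\power{s}{\alpha}\big|^p\dx.
\end{equation*}
Collecting the two contributions and using $|B|/|A|\ge1$ to absorb the first term as well yields the claimed inequality.

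The only point requiring a little care — and the step I expect to be the main obstacle — is the comparison of $\power{(u)_A}{\alpha}$ with $\mint_A\power{u}{\alpha}\dx$ when $\alpha<1$, since then $t\mapsto\power{t}{\alpha}$ is concave for $t>0$ but the sign of $u$ varies, so one cannot simply invoke Jensen. Here the right tool is again Lemma~\ref{lem:a-b}: writing $a=(u)_A$ (a constant) and using the pointwise bound from \eqref{a-b-1}, one reduces everything to estimating $(|{(u)_A}|+|u|)^{\alpha-1}|u-(u)_A|$ in $L^1(A)$; when $\alpha\ge1$ this is immediate by Hölder with exponents $p$ and $p'$ together with $\alpha p\ge p\ge1$, and when $\alpha<1$ the factor $(|{(u)_A}|+|u|)^{\alpha-1}\le|u|^{\alpha-1}$ combines with $|u-(u)_A|\le|u|+|(u)_A|$ and the bound $|(u)_A|\le\mint_A|u|\dx$ to close the estimate after one more application of Jensen in the exponent $\alpha p\ge1$. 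Once this algebraic reduction is set up correctly, the remaining manipulations are routine, and the constant $c$ depends only on $\alpha$ and $p$ as asserted.
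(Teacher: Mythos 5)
Your overall architecture is the standard one and is sound as far as it goes: by the triangle inequality and Jensen's inequality it suffices to bound the single number $\big|\power{(u)_A}{\alpha}-\power{s}{\alpha}\big|^p$ by $c\,\mint_A\big|\power{u}{\alpha}-\power{s}{\alpha}\big|^p\dx$, and the factor $|B|/|A|$ then comes from enlarging the domain of integration from $A$ to $B$. (The paper itself gives no proof but refers to \cite[Lemma~3.5]{BDKS:2018}.) The genuine gap is exactly at the step you yourself flag as the main obstacle, and your proposed resolution does not close it. For $\alpha<1$ you estimate $(|(u)_A|+|u|)^{\alpha-1}|u-(u)_A|\le |u|^{\alpha-1}\big(|u|+|(u)_A|\big)=|u|^{\alpha}+|u|^{\alpha-1}|(u)_A|$, which discards all cancellation in $|u-(u)_A|$. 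The resulting quantity cannot be controlled by the right-hand side of the lemma: take $u\equiv s\neq 0$ constant, so that both sides of the asserted inequality vanish, while your intermediate bound equals $2|s|^{\alpha}>0$; moreover $|u|^{\alpha-1}$ need not be integrable near the zero set of $u$ when $\alpha<1$. The $\alpha\ge1$ branch suffers from a related defect: H\"older with exponents $p,p'$ yields a product of a norm of $u$ with the $L^p$-oscillation of $u$ itself, and converting that back into the $L^p$-oscillation of $\power{u}{\alpha}$ is precisely the content of the lemma, so the argument becomes circular there.

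What is actually required is the quantitative quasi-minimality of $\power{(u)_A}{\alpha}$ for the functional $\sigma\mapsto\mint_A|\power{u}{\alpha}-\sigma|^p\dx$, and the known proofs keep the cancellation and distinguish a degenerate from a non-degenerate regime, e.g.\ by comparing $E:=\big(\mint_A|\power{u}{\alpha}-\power{s}{\alpha}|^p\dx\big)^{1/p}$ with $|\power{s}{\alpha}|$: if $|\power{s}{\alpha}|\le 2E$ all terms are bounded by $cE$ directly, while if $|\power{s}{\alpha}|>2E$ one uses \eqref{a-b-1} in both directions together with the fact that $|u|$ is comparable to $|s|$ on a large portion of $A$. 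The hypothesis $\alpha\ge\frac1p$ must enter through Jensen's inequality applied with the exponent $\alpha p\ge1$ (for instance in the form $\mint_A|u-s|\dx\le\big(\mint_A|u-s|^{\alpha p}\dx\big)^{1/(\alpha p)}$); the example of $u$ concentrated on a small subset of $A$ shows the conclusion is false for $\alpha p<1$, so no argument can succeed without using this relation at the critical point, whereas in your sketch it only appears to guarantee finiteness of integrals. I recommend either reproducing the case analysis of \cite[Lemma~3.5]{BDKS:2018} or reworking the key comparison along the lines just indicated.
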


\subsection{Gagliardo-Nirenberg interpolation}

We note that for any $v\in K_\psi$, the properties $\power vm\in
L^\infty(0,T;L^{\frac{1+m}{m}}(\Omega))$ and $\power vm\in
  L^2(0,T;W^{1,2}(\Omega))$ imply by Gagliardo-Nirenberg interpolation
  that
  \begin{equation}
    \label{integrability-Kpsi}
    v\in L^{2m+\frac{2(1+m)}{n}}_{\mathrm{loc}}(\Omega_T),
  \end{equation}
  cf. \cite[Chapter 2, Prop. 4.1]{DBGV-book}. This represents an improvement compared
  to the integrability $v\in L^{1+m}(\Omega)$ 
  if and only if $m>\frac{(n-2)_+}{n+2}$,
  which underlines the significance of the critical exponent 
  $\frac{(n-2)_+}{n+2}$.

\section{An energy estimate and a Sobolev-Poincar\'e inequality}
\label{sec:energy-poincare}

The first step towards the proof of a reverse H\"older inequality is
the following energy estimate. 

\begin{lemma}\label{lem:energy}
Let $0<m\le1$ and $u\in K_\psi$ be a local weak solution to the
variational inequality \eqref{vari-ineq}, under assumptions
\eqref{obs-func}-\eqref{A-struc}.
Then there exists a constant $c=c(m,\nu,L)>0$ such that
on any cylinder $Q_{\rho}^{(\theta)}(z_o)\subset\Omega_T$ with
$0<\rho\le 1$ and $\theta>0$,  the following energy estimate holds:
\begin{align}\label{energy:a}\nonumber
	& \sup_{t \in \Lambda_r } 
 \mint_{B_r^{(\theta)} (x_o)} 
	\frac{ \big|\power u {\frac{1+m}2}(\cdot,t)-\power a{\frac{1+m}2}\big|^2}{r^{\frac{m+1}{m}}} \dx 
	+
	\biint_{Q_r^{(\theta)}(z_o)} \big|D\power um\big|^2 \dxdt \\\nonumber
	&\qquad\leq 
	c\,\biint_{Q_\rho^{(\theta)}(z_o)} 
	\Bigg[ \frac{\big|\power u {\frac{1+m}2} - \power a {\frac{1+m}2}\big|^2}{\rho^{\frac{1+m}{m}}-r^{\frac{1+m}{m}}}
	+
	\theta^{\frac{2m(1-m)}{1+m}}\frac{\big|\power um-\power a {m}\big|^2}{(\rho-r)^2}\Bigg]
	  \dxdt \\
	  &\qquad\qquad+
	c\, \biint_{Q_{\rho}^{(\theta)}(z_o)}\big( G+|a|^{1+m}\big) \dxdt ,
\end{align}
for every $r\in[\rho/2,\rho)$ and every  $a \in\R$,
where
\begin{align*}
G:= |F|^{2}  +|g|^{1+m}+ |D\power \psi m|^2 + |\partial_t \power \psi m|^{\frac{1+m}m} .
\end{align*}
In particular, we have the estimate
\begin{align}\label{energy:a=0}
	& \sup_{t \in \Lambda_{\rho/2}(t_o) } 
 \mint_{B_{\rho/2} (x_o)} 
	\frac{ |u(\cdot,t)|^{1+m}}{\rho^{\frac{1+m}{m}}} \dx 
	+
	\biint_{Q_{\rho/2}^{(\theta)}(z_o)} \big|D\power um\big|^2 \dxdt \nonumber\\
	&\qquad\leq 
	c\,\biint_{Q_\rho^{(\theta)}(z_o)} 
	\bigg[ \frac{|u|^{1+m}}{\rho^{\frac{1+m}{m}}}
	+
	\theta^{\frac{2m(1-m)}{1+m}}\frac{|u|^{2m}}{\rho^2} + G\bigg] \dxdt 
\end{align}
for some $c=c(m,\nu,L)>0$.
\end{lemma}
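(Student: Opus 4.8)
The plan is to derive the energy estimate \eqref{energy:a} from the variational inequality \eqref{vari-ineq} by a standard testing procedure, adapted to the obstacle setting and the fast-diffusion scaling. First I would fix the cylinder $Q_\rho^{(\theta)}(z_o)$, choose radii $r\in[\rho/2,\rho)$, and select cut-off functions $\eta\in W^{1,\infty}_0(B_\rho^{(\theta)}(x_o))$ with $\eta\equiv1$ on $B_r^{(\theta)}(x_o)$ and $|D\eta|\lesssim(\rho-r)^{-1}\theta^{-\frac{m(m-1)}{1+m}}$ (the extra factor comes from the anisotropic spatial scaling in the definition of $B_\rho^{(\theta)}$), together with a Lipschitz cut-off $\alpha=\alpha_\tau$ in time that localizes to $\Lambda_r$ and whose derivative sits on a time interval of length $\sim\rho^{\frac{1+m}m}-r^{\frac{1+m}m}$; eventually I will also mollify $\alpha$ around a fixed time level to extract the supremum term. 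The natural comparison map is $v$ with $\power vm=\power um-\eta^2(\power um-\power am)$ — but since this $v$ need not lie in $K'_\psi$ (it may violate $v\ge\psi$, and $\partial_t\power vm$ need not exist), I would instead use $\power vm=\max\{\power\psi m,\ \power um-\eta^2(\power um-\power am)\}$ combined with the time-mollification $\llbracket\cdot\rrbracket_h$ from \eqref{time-molli} to produce an admissible competitor, then pass $h\to0$ at the end.

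Next I would insert this competitor into \eqref{vari-ineq}. The elliptic principal term, via the structure conditions \eqref{A-struc}, produces the good term $\nu\iint\alpha\eta^2|D\power um|^2$ after absorbing, through Young's inequality, the cross terms involving $D\eta$; these cross terms generate exactly the contribution $\iint\alpha(\rho-r)^{-2}\theta^{\frac{2m(1-m)}{1+m}}|\power um-\power am|^2$, using $|D\eta|^2\lesssim(\rho-r)^{-2}\theta^{-\frac{2m(m-1)}{1+m}}$ and $\theta^{-\frac{2m(m-1)}{1+m}}=\theta^{\frac{2m(1-m)}{1+m}}$. The parabolic term \eqref{weak-time-term} is the delicate one: expanding $\alpha'(\frac1{1+m}|u|^{1+m}-u\power vm)-\alpha u\partial_t\power vm$ with the chosen $v$ and using the algebraic identity encoded in $\mathfrak b[\power um,\power am]$ from \eqref{def:b}–\eqref{boundary-term}, the part of $\partial_t$ acting through $\eta^2(\power um-\power am)$ recombines into (the time derivative of) $\iint\alpha'\eta^2\,\mathfrak b[\power um,\power am]$, which after the mollified-in-time choice of $\alpha$ yields the supremum term $\sup_t\mint_{B_r^{(\theta)}}r^{-\frac{1+m}m}|\power u{\frac{1+m}2}(\cdot,t)-\power a{\frac{1+m}2}|^2$ via \eqref{boundary-term}; here the factor $r^{-\frac{1+m}m}$ appears because $|\Lambda_r|\sim r^{\frac{1+m}m}$ and the mean is over $B_r^{(\theta)}$. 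The remaining right-hand side terms $\iint\alpha\eta g(\power vm-\power um)$, $\iint\alpha F\cdot D(\eta(\power vm-\power um))$, and the obstacle contributions $\partial_t\power\psi m$, $D\power\psi m$ coming from the $\max$ with $\power\psi m$, are all controlled by Young's and Hölder's inequalities and absorbed, producing the term $c\iint(G+|a|^{1+m})$ — the $|a|^{1+m}$ surplus arising when estimating $g(\power vm-\power um)$ and similar products via $|u|^{1+m}\lesssim|\power u{\frac{1+m}2}-\power a{\frac{1+m}2}|^2+|a|^{1+m}$.

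With this inequality in hand, \eqref{energy:a} follows after applying the iteration Lemma~\ref{lem:tech} in the radial variable to reabsorb the terms of the form $\vartheta\,\mathrm{(sup)}$ or $\vartheta\iint|D\power um|^2$ that unavoidably appear on the right when $\eta$ is not exactly a characteristic function; one applies the lemma with $\phi(r)$ equal to the left-hand side of \eqref{energy:a}, with $\alpha=\frac{1+m}m$ for the first family of terms and (after a change of variables) $\alpha=1$ for the second, and $\beta=1$. Finally, \eqref{energy:a=0} is the special case $a=0$ combined with the elementary bound $|u|^{2m}\le|u|^{1+m}+|u|^{1+m}$ is not quite right — rather one keeps the term $\theta^{\frac{2m(1-m)}{1+m}}\rho^{-2}|u|^{2m}$ as written, noting $\power u{\frac{1+m}2}$ with $a=0$ is just $|u|^{\frac{1+m}2}$ so the first integrand becomes $\rho^{-\frac{1+m}m}|u|^{1+m}$; one also absorbs the $|a|^{1+m}=0$ term trivially and enlarges $r$ to $\rho/2$. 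I expect the main obstacle to be the rigorous treatment of the parabolic term: justifying the algebraic recombination into $\mathfrak b[\power um,\power am]$ in the presence of the obstacle truncation $\max\{\power\psi m,\cdot\}$, handling the extra integrals over the coincidence-type set $\{\power um\le\power\psi m+\text{(lower order)}\}$ that the obstacle forces (cf. the discussion preceding Lemma~\ref{lem:time-diff}), and carrying out the mollification limit $h\to0$ so that the mollified $\partial_t\power\psi m\in L^{\frac{1+m}m}$ term converges correctly — all while keeping the scaling powers of $\theta$ and $\rho$ exactly as in \eqref{energy:a}.
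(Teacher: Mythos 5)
Your overall strategy (testing \eqref{vari-ineq}, exploiting \eqref{A-struc} with Young's inequality for the diffusion and inhomogeneity terms, and recombining the parabolic term into the boundary quantity $\mathfrak b[\power um,\power am]$) matches the paper's, but your choice of comparison function takes a genuinely different and considerably harder route. The paper uses the time-independent competitor $\power vm=\max\{\power am,\power\psi m\}=\power am+(\power\psi m-\power am)_+$ together with the \emph{external} cut-offs $\eta=\phi^2$ and $\alpha=\zeta\xi_\eps$. This $v$ lies in $K'_\psi$ directly, since $\partial_t\power vm=\partial_t(\power\psi m-\power am)_+\in L^{\frac{1+m}m}$ by \eqref{obs-func}, so no time mollification of $u$ is needed, and the tested quantity $\eta(\power vm-\power um)=-\phi^2(\power um-\power am)+\phi^2(\power\psi m-\power am)_+$ is exactly your classical test function plus an obstacle correction. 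Your competitor $\max\{\power\psi m,\,\power um-\eta^2(\power um-\power am)\}$ both builds the cut-off into $v$ (whereas \eqref{vari-ineq} applies the cut-off externally to $\power vm-\power um$) and contains $\power um$ itself, which forces the mollification $\llbracket\,\cdot\,\rrbracket_h$ and the delicate passage $h\downarrow0$ in the term $-\alpha u\,\partial_t\power {v_h}m$; that machinery is required in the paper only for the gluing Lemma~\ref{lem:time-diff}, not for the energy estimate.

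The genuine gap is in your claim that the obstacle contributions are "all controlled by Young's and H\"older's inequalities." The term that survives at the fixed time slice $t_1$ where the supremum is extracted, namely $\int_{B_\rho^{(\theta)}}\phi^2\,|u(\cdot,t_1)-a|\,(\power\psi m(\cdot,t_1)-\power am)_+\dx$, cannot be absorbed into a space--time integral by H\"older alone: one must first reduce it to $\int(\power\psi{\frac{1+m}2}(\cdot,t_1)-\power a{\frac{1+m}2})_+^2\dx$ via the algebraic estimate \eqref{est-I2-2} and then trade the single time slice for a space--time integral using the fundamental theorem of calculus in $t$ together with $\partial_t\power\psi m\in L^{\frac{1+m}m}$ (estimate \eqref{aa}); this is precisely where the contribution $|\partial_t\power\psi m|^{\frac{1+m}m}$ in $G$ is generated, and your outline does not address it. Two smaller points: the iteration Lemma~\ref{lem:tech} is not needed here (after Young's inequality the gradient term is retained on the left with coefficient $\nu/8$ and no supremum term appears on the right), and with the paper's competitor the boundary term $\mathfrak b[\power um,\power am]$ drops out of the $\alpha'$-part of \eqref{weak-time-term} by pure algebra, with no mollification and no recombination "through $\eta^2(\power um-\power am)$" required.
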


\begin{proof}
For notational convenience, we assume that $z_o$ is the origin, and we
omit the origin in our notation by writing $B_\rho^{(\theta)}:=B_\rho^{(\theta)}(0)$, $\Lambda_\rho:=\Lambda_\rho(0)$, and $Q_\rho^{(\theta)}:=Q_\rho^{(\theta)}(0)=B_\rho^{(\theta)}\times\Lambda_\rho$.
Let $r\in [\frac{\rho}2,\rho)$ and let
$\phi\in C^1_0\big(B^{(\theta)}_\rho,[0,1]\big)$ be the standard cut
off function with $\phi \equiv 1$ in $B_r^{(\theta)}$ and $|D\phi|\leq
\frac{2\theta^{\frac{m(1-m)}{1+m}}}{\rho-r}$. We also consider the functions
$\zeta \in W^{1,\infty} \big(\Lambda_\rho,[0,1]\big)$ and
$\xi_\varepsilon \in W^{1,\infty}\big (\Lambda_\rho,[0,1]\big)$
defined by 
\begin{equation*}
	\zeta(t)
	:=
	\left\{
	\begin{array}{cl}
	\frac{t + \rho^{\frac{1+m}{m}}}{\rho^{\frac{1+m}{m}}-r^{\frac{1+m}{m}}}, &
	\mbox{for $t\in \big(-\rho^{\frac{1+m}{m}},-r^{\frac{1+m}{m}}\big)$},\\[3pt]
	1 ,& \mbox{for $t\in \big[-r^{\frac{1+m}{m}},  \rho^{\frac{1+m}{m}}\big)$,}
	\end{array}
	\right.
\end{equation*}
and, for given $\varepsilon >0$ and $t_1 \in \Lambda_r$ 
$$
	\xi_\varepsilon(t) 
	:= 
	\left\{
	\begin{array}{cl}
	1 ,& \mbox{for $t\in \big(- \rho^{\frac{1+m}{m}}, t_1\big]$,} \\[3pt]
	1-\frac{1}{\varepsilon} (t-t_1) ,& 
	\mbox{for $t\in (t_1, t_1+\varepsilon)$,} \\[3pt]
	0 , & \mbox{for $t\in [t_1+\varepsilon,  \rho^{\frac{1+m}{m}} )$.} 
\end{array}
\right.
$$
Next, for a parameter $a\in\R$ we choose
$$
	\power{v}{m}
	:= 
	\max\{\power{a}{m}, \power{\psi}{m}\} = \power a m + (\power \psi m-\power a m)_+
$$
as comparison function in the variational inequality
\eqref{vari-ineq}. Note that this function is admissible in
\eqref{vari-ineq} since \eqref{obs-func} implies $v\in K_\psi'$. 
Moreover, we use the cut-off functions
$\alpha = \zeta\xi_{\varepsilon}$ and $\eta=\phi^2$ in
\eqref{vari-ineq}.
With these choices, the integral 
involving the time derivative in \eqref{vari-ineq} is given by
\begin{align}\label{10}\nonumber
&\langle\!\langle\partial_tu,(\zeta\xi_\varepsilon)\phi^2(\power vm- \power um)\rangle\!\rangle \\\nonumber
&=\iint_{Q_\rho^{(\theta)}} \phi^2 (\zeta\xi_\varepsilon)' \left( \tfrac1{1+m}|u|^{1+m} -u\power am \right) \dxdt \\
& \quad - \iint_{Q_\rho^{(\theta)}} \phi^2 (\zeta\xi_\varepsilon)' u
(\power \psi m-\power am)_+ \dxdt
-
\iint_{Q_\rho^{(\theta)}}\phi^2 \zeta\xi_\varepsilon u \partial_t(\power \psi m-\power am)_+ \dxdt.
\end{align}
At this point, it is convenient to introduce the boundary term
$\mathfrak{b}$ defined in \eqref{def:b}, which can be rewritten to
\begin{equation*}
  \mathfrak b[\power{u}{m},\power{a}{m}]
  :=
  \tfrac1{1+m}|u|^{1+m} -u\power am +\tfrac{m}{1+m} \abs{a}^{1+m}.
\end{equation*}
Since $\zeta\xi_\varepsilon \in W^{1,\infty}_0(\Lambda_\rho)$, we have
$$\iint_{Q_\rho^{(\theta)}} \phi^2 (\zeta\xi_\varepsilon)' \tfrac{m}{1+m}|a|^{1+m} \dxdt =0$$
and
$$\iint_{Q_\rho^{(\theta)}} \phi^2 (\zeta\xi_\varepsilon)' a (\power \psi m-\power am)_+\dxdt + \iint_{Q_\rho^{(\theta)}}\phi^2 \zeta\xi_\varepsilon a \partial_t(\power \psi m-\power am)_+ \dxdt =0.$$
Using these observations in \eqref{10}, we deduce 
\begin{align}\label{11}
\nonumber \langle\!\langle\partial_tu,(\zeta\xi_\varepsilon)\phi^2(\power wm-\power um)\rangle\!\rangle 
&=\iint_{Q_\rho^{(\theta)}} \phi^2 (\zeta\xi_\varepsilon)' \mathfrak b\big[\power{u}{m}, \power{a}{m}\big] \dxdt  \\\nonumber
&\quad - \iint_{Q_\rho^{(\theta)}} \phi^2 (\zeta\xi_\varepsilon)' (u-a) (\power \psi m-\power am)_+ \dxdt \\\nonumber
&\quad - \iint_{Q_\rho^{(\theta)}}\phi^2 \zeta\xi_\varepsilon (u-a) \partial_t(\power \psi m-\power am)_+ \dxdt\\
&=:\mathrm{I}_1 +\mathrm{I}_2+\mathrm{I}_3.
\end{align}
Letting $\eps\downarrow0$ and keeping in mind the definitions of
$\zeta$ and $\xi_\eps$, we infer that 
\begin{align}\label{12}
\limsup_{\epsilon\downarrow0}\mathrm{I}_1 
&\leq 
	\iint_{Q_\rho^{(\theta)}} 
	\frac{\mathfrak b\big[\power{u}{m}, \power{a}{m}\big]}{\rho^{\frac{1+m}{m}}-r^{\frac{1+m}{m}}} \dxdt
-\int_{B_\rho^{(\theta)}} \phi^2  \mathfrak b\big[\power{u}{m}(\cdot,t_1), \power{a}{m}\big] \dx
\end{align}
and
\begin{align}\label{15-1}
\limsup_{\epsilon\downarrow0}|\mathrm{I}_2 |
&\le
\iint_{Q_\rho^{(\theta)}} 
\frac{|u-a|(\power \psi m - \power am)_+}{\rho^{\frac{1+m}{m}}-r^{\frac{1+m}{m}}} \dxdt \\\nonumber
&\quad\quad+\int_{B_\rho^{(\theta)}} \phi^2  |u(\cdot, t_1)-a|(\power \psi m(\cdot, t_1) - \power am)_+ \dx.
\end{align}
In the last estimate, we use the fact $u\ge \psi$ a.e. in $\Omega_T$
and inequality \eqref{a-b-1} twice to deduce 
\begin{align}\label{est-I2-1}
|u-a|(\power \psi m - \power am)_+ &\le |u-a||\power um-\power am| \nonumber\\
&\le c\big((|u|+ |a|)^{\frac{m-1}2}|u-a|\big)^2 \nonumber\\
&\le c\big|\power u{\frac{1+m}2}- \power a{\frac{1+m}2}\big|^2 .
\end{align}
On the other hand, we can apply in turn \eqref{a-b-1}, the triangle
inequality,  and then \eqref{a-b-2} and \eqref{a-b-1}, with the result
\begin{align*}
	|u-a|(\power{\psi}{m}-\power{a}{m})_+ 
	& \le  
	c|\power um-\power am|\big(|\power um|+|\power am|\big)|^{\frac{1-m}{m}}(\power \psi m-\power am)_+ \\
	& \le  
	c|\power um-\power am|\big(|\power um-\power am|^{\frac{1-m}{m}}+|\power am|^{\frac{1-m}{m}}\big)(\power \psi m-\power am)_+ \\
	& \le  
	c|\power u{\frac{1+m}2}-\power a{\frac{1+m}2}|^{\frac2{1+m}}(\power \psi{\frac{1+m}2}-\power a{\frac{1+m}2})_+^{\frac{2m}{1+m}} \\
	&\quad\quad+c\big|\power{u}{\frac{1+m}{2}}-\power{a}{\frac{1+m}{2}}\big| \big(\power{\psi}{\frac{1+m}{2}}-\power{a}{\frac{1+m}{2}}\big)_+ .
\end{align*} 
Estimating the right-hand side further by Young's inequality and
\eqref{boundary-term}, we arrive at 
\begin{align}\label{est-I2-2}
|u-a|(\power{\psi}{m}-\power{a}{m})_+ 
&\le \tfrac12 \mathfrak{b}\big[ \power um, \power am\big]+c|\power\psi{\frac{1+m}2}-\power{a}{\frac{1+m}2}|^2.
\end{align}
We now estimate the two integrals in \eqref{15-1} by \eqref{est-I2-1}
and \eqref{est-I2-2}, respectively, which leads to the bound
\begin{align}\label{15-2}
	\limsup_{\epsilon\downarrow0}|\mathrm{I}_2 |
	&\le 
	c\iint_{Q_\rho^{(\theta)}} 
	\frac{\big| \power u {\frac{1+m}2} - \power a {\frac{1+m}2} \big|^2}{\rho^{\frac{1+m}{m}}-r^{\frac{1+m}{m}}} \dxdt 
	+
	\tfrac12\int_{B_\rho^{(\theta)}} \phi^2  \mathfrak b\big[\power{u}{m}(\cdot,t_1), \power{a}{m}\big] \dx\nonumber\\
	&\quad\quad+
	c\int_{B_\rho^{(\theta)}}  \big( \power \psi {\frac{1+m}2}(\cdot,t_1) - \power a {\frac{1+m}2} \big)_+^2 \dx.
\end{align}
For the last integral we again apply the obstacle constraint
$u\ge\psi$ a.e., inequality \eqref{a-b-1}, and then Young's inequality.
This yields the estimate
\begin{align}\label{aa}\nonumber
	&\int_{B_\rho^{(\theta)}}  \big( \power{\psi}{\frac{1+m}2}(\cdot,t_1) - \power a {\frac{1+m}2} \big)_+^2 \dx \\\nonumber
	& \quad \le 
	\mint_{\Lambda_\rho} \int_{B_\rho^{(\theta)} }  \big( \power \psi {\frac{1+m}2} - \power a {\frac{1+m}2} \big)_+^2 \dxdt
	+
	\int_{\Lambda_\rho}\int_{B_\rho^{(\theta)}} \Big| \partial_{t} \big( \power \psi {\frac{1+m}2} - \power a {\frac{1+m}2} \big)_+^2 \Big|  \dxdt  \\\nonumber
	 & \quad \le
	 \mint_{\Lambda_\rho} \int_{B_\rho^{(\theta)} }  \big| \power u {\frac{1+m}2} - \power a {\frac{1+m}2} \big|^2 \dxdt
	 +
	 c\iint_{Q_\rho^{(\theta)}} (  \psi - a )_+  |\partial_{t} \power \psi m| \dxdt \\
	 & \quad \le
	 \iint_{Q_\rho^{(\theta)} }  \frac{\big| \power u {\frac{1+m}2} - \power a {\frac{1+m}2} \big|^2}{\rho^{\frac{1+m}m} - r^{\frac{1+m}m}} \dxdt
	 +
	 c\iint_{Q_\rho^{(\theta)}}\big( | u-a|^{1+m} {+} |\partial_{t} \power \psi m|^{\frac{1+m}m} \big)\dxdt  . 
\end{align}
In the last step, we also used the fact $\rho\le1$.
For the estimate
of the last integral, we use inequality \eqref{a-b-1} and Young's inequality, which gives
\begin{align}\label{elementary}\nonumber
  |u-a|^{1+m}
  &\le
  c\Big[(|u-a|+|a|)^{\frac{1-m}2}\big|\power{u}{\frac{1+m}2}-\power{a}{\frac{1+m}2}\big|\Big]^{1+m}\\
  &\le\tfrac12|u-a|^{1+m}
  +
  c\big|\power{u}{\frac{1+m}2}-\power{a}{\frac{1+m}2}\big|^2+c|a|^{1+m}.
\end{align}
The first term on the right-hand side can be re-absorbed into the
left-hand side. Inserting the resulting estimate into \eqref{aa}, we infer
\begin{align*}
\nonumber
	&\int_{B_\rho^{(\theta)}}  \big( \power{\psi}{\frac{1+m}2}(\cdot,t_1) - \power a {\frac{1+m}2} \big)_+^2 \dx \\
	&\le 
	c\iint_{Q_\rho^{(\theta)}} 
	\frac{ \big|\power u {\frac{1+m}2} - \power a {\frac{1+m}2}\big|^2}{\rho^{\frac{1+m}{m}}-r^{\frac{1+m}{m}}} \dxdt 
        +
        c\iint_{Q_\rho^{(\theta)}}  \big( |\partial_{t} \power \psi m |^{\frac{1+m}m} +|a|^{1+m}\big)\dxdt.
\end{align*}
In view of \eqref{15-2} and the previous inequality, we finally obtain
\begin{align}\label{16}
\nonumber
	\limsup_{\epsilon\downarrow0}|\mathrm{I}_2|
	&\le 
	c\iint_{Q_\rho^{(\theta)}} 
	\frac{ \big|\power u {\frac{1+m}2} - \power a {\frac{1+m}2}\big|^2}{\rho^{\frac{1+m}{m}}-r^{\frac{1+m}{m}}} \dxdt 
	  +c\iint_{Q_\rho^{(\theta)}} \big( |\partial_{t} \power \psi m |^{\frac{1+m}m} +|a|^{1+m} \big) \dxdt\\
	&\quad\quad 
	+
	\tfrac12\int_{B_\rho^{(\theta)}} \phi^2 \mathfrak{b} \big[\power um (\cdot,t_1) , \power am\big] \dx.
\end{align}
Similarly, using Young's inequality and~\eqref{elementary}, we infer that
\begin{align}\label{17}
\nonumber
	|\mathrm{I}_3|
	&\le 
	\iint_{Q_\rho^{(\theta)}}  |u-a| |\partial_t \power \psi m|
        \dxdt \\\nonumber
        &\le
        \iint_{Q_\rho^{(\theta)}}\big( | u-a|^{1+m} {+} |\partial_{t} \power \psi m|^{\frac{1+m}m} \big)\dxdt\\
	&\le
	c\iint_{Q_\rho^{(\theta)}} 
	\frac{ \big|\power u {\frac{1+m}2} - \power a {\frac{1+m}2}\big|^2}{\rho^{\frac{1+m}{m}}-r^{\frac{1+m}{m}}} \dxdt 
	  +c\iint_{Q_\rho^{(\theta)}} \big( |\partial_{t} \power \psi m |^{\frac{1+m}m} +|a|^{1+m}\big)\dxdt .
\end{align}
Combining \eqref{12}, \eqref{16}, and \eqref{17} and applying
\eqref{boundary-term}, we finally arrive at
\begin{align}\label{est-time}
&  \limsup_{\eps\downarrow0}\,\langle\!\langle\partial_tu,(\zeta\xi_\varepsilon)\phi^2(\power
wm-\power um)\rangle\!\rangle \\\nonumber
&\qquad \le 
c\iint_{Q_\rho^{(\theta)}} 
\frac{ \big| \power u {\frac{1+m}2} - \power a {\frac{1+m}2}
  \big|^2}{\rho^{\frac{1+m}{m}}-r^{\frac{1+m}{m}}} \dxdt
+
c\iint_{Q_\rho^{(\theta)}}  \big(| \partial_{t} \power \psi m |^{\frac{1+m}m}+|a|^{1+m} \big)\dxdt\\\nonumber
&\qquad\qquad 
-
\frac12\int_{B_\rho^{(\theta)}} \phi^2 \mathfrak b \big[\power um (\cdot,t_1), \power am\big] \dx.
\end{align}
We next consider the diffusion term in the variational inequality
\eqref{vari-ineq}. For our choice $\power vm= \power
am+(\power\psi m-\power am)_+$, this term is given by 
\begin{align}\label{19}
& \iint_{Q_\rho^{(\theta)}}
\zeta\xi_\varepsilon \A \cdot D(\phi^2(\power vm-\power um)) \dxdt\\\nonumber
&\qquad =\iint_{Q_\rho^{(\theta)}}
\zeta\xi_\varepsilon \A \cdot D(\phi^2(\power am-\power um)) \dxdt  \\\nonumber
& \qquad\qquad +
\iint_{Q_\rho^{(\theta)}} \zeta\xi_\varepsilon \A \cdot
D(\phi^2(\power \psi m-\power am)_+) \dxdt\\\nonumber
&\qquad=:\mathrm{II}_1 + \mathrm{II}_2.
\end{align}
Adopting the growth condition in \eqref{A-struc} and Young's
inequality, we obtain 
\begin{align*}
	\mathrm{II}_1
	&= \iint_{Q_\rho^{(\theta)}} \zeta\xi_\epsilon \A \cdot \big(2
        \phi  (\power am-\power um)D\phi  - \phi^2 D\power um \big) \dxdt \\
	&\leq  
	2L\iint_{Q_\rho^{(\theta)}} 
	\zeta \xi_\varepsilon \phi  |D\phi| \big|\power u{m}-\power
        a{m}\big|\big|D\power u{m}\big|
	\dx\dt 
	-\nu\iint_{Q_\rho^{(\theta)}} 
	\zeta \xi_\varepsilon \phi^2  \big|D\power u{m}\big|^2
          \dx\dt
	\\
	&\leq   
	c \iint_{Q_\rho^{(\theta)}} \theta^{\frac{2m(1-m)}{1+m}}
        \frac{\big|\power u{m}-\power a{m}\big|^2}{(\rho-r)^2} \dx\dt -
	\frac{\nu}{2} \iint_{Q_\rho^{(\theta)}} 
	 \zeta \xi_\varepsilon  \phi^2 \big|D\power u{m}\big|^2 \dx\dt,
\end{align*}
and
\begin{align*}
\mathrm{II}_2 
&= \iint_{Q_\rho^{(\theta)}} \zeta\xi_\epsilon \A \cdot \big(2 \phi
(\power\psi m-\power am)_+  D\phi + \phi^2 D(\power \psi m-\power am)_+ \big) \dxdt \\
&\le	2L\iint_{Q_\rho^{(\theta)}} 
	\zeta \xi_\varepsilon \phi  |D\phi| \big(\power\psi{m}-\power
        a{m}\big)_+ \big|D\power u{m}\big|
	\dxdt \\
&\qquad	+ L \iint_{Q_\rho^{(\theta)}} 
	\zeta \xi_\varepsilon \phi^2  \big|D\power u{m}\big| \big|
        D(\power \psi m-\power am)_+\big|
          \dxdt \\
& \le c\iint_{Q_\rho^{(\theta)}} 
	   \theta^{\frac{2m(1-m)}{1+m}}\frac{ \big|\power u{m}-\power a{m}\big|^2}{(\rho-r)^2} \dxdt
	+ c \iint_{Q_\rho^{(\theta)}} \big|D\power \psi{m}\big|^2 \dx\dt \\
&\quad\quad	+\frac{\nu}{4} \iint_{Q_\rho^{(\theta)}} 
	 \zeta \xi_\varepsilon  \phi^2 \big|D\power u{m}\big|^2 \dx\dt.
\end{align*}
In the last step, we also used the obstacle constraint $u\ge\psi$
a.e. in $\Omega_T$.        
Plugging the two preceding estimates into~\eqref{19}, we conclude
\begin{align}\label{est-A}
&\iint_{Q_\rho^{(\theta)}} \zeta\xi_\varepsilon \A \cdot
D(\phi^2(\power vm-\power um)) \dxdt \\\nonumber
& \qquad \le c\iint_{Q_\rho^{(\theta)}} 
	   \theta^{\frac{2m(1-m)}{1+m}}\frac{ \big|\power u{m}-\power a{m}\big|^2}{(\rho-r)^2} \dxdt
	+ c \iint_{Q_\rho^{(\theta)}} \big|D\power\psi{m}\big|^2 \dx\dt \\\nonumber
&\qquad\qquad	-\frac{\nu}{4} \iint_{Q_\rho^{(\theta)}} 
	 \zeta \xi_\varepsilon  \phi^2 \big|D\power u{m}\big|^2 \dx\dt
\end{align}
for some $c=c(m,\nu,L)>0.$
We now estimate the right-hand side terms in the variational
inequality \eqref{vari-ineq}. For the divergence term, we use the
bounds $|\power vm-\power um|\le|\power um-\power am|$ and $|D(\power
vm-\power um)\le |D\power um|+|D\power\psi m|$ and Young's inequality,
with the result 
\begin{align}\label{est-F}
\nonumber
&-\iint_{Q_{\rho}^{(\theta)}} \zeta\xi_\varepsilon  F \cdot
D\big(\phi^2 \big(\power vm-\power um\big)\big) \dxdt \\\nonumber
&\qquad\le 
\iint_{Q_{\rho}^{(\theta)}} \zeta\xi_\varepsilon  |F|
\big(2\phi |D\phi||\power vm-\power um|+\phi^2|D(\power vm-\power um)|\big) \dxdt\\\nonumber
&\qquad\ge
	c \iint_{Q_{\rho}^{(\theta)}}
        \theta^{\frac{2m(1-m)}{1+m}}\frac{\big|\power u{m}-\power a{m}\big|^{2}}{(\rho-r)^2} \dxdt 
	+ c \iint_{Q_{\rho}^{(\theta)}} |F|^{2}\dxdt \\
	&\qquad\qquad
	+ c \iint_{Q_{\rho}^{(\theta)}} |D\power\psi m|^{2}\dxdt
        +\frac{\nu}{8}
	\iint_{Q_{\rho}^{(\theta)}} 
	 \zeta \xi_\varepsilon\phi^2 \big|D\power u{m}\big|^2 \dxdt.
\end{align}
Finally, we use the fact $|\power vm-\power um|\le|\power um-\power
am|$ and then Young's inequality, \eqref{a-b-2} and the assumption
$\rho\le1$ for the estimate
\begin{align}\label{est-g}
\nonumber
 &-\iint_{Q_{\rho}^{(\theta)}} \zeta\xi_\varepsilon \phi^2 g (\power
vm-\power um) \dxdt 
\\\nonumber
&\qquad\le \iint_{Q_{\rho}^{(\theta)}} |g| \big|\power um-\power am\big| \dxdt \\
& \qquad \le c \iint_{Q_{\rho}^{(\theta)}} \frac{\big|\power u{\frac{1+m}2}-\power a{\frac{1+m}2}\big|^2}{\rho^{\frac{1+m}m}-r^{\frac{1+m}m}} \dxdt
+c \iint_{Q_{\rho}^{(\theta)}}  |g|^{1+m} \dxdt.
\end{align} 
Now, we use \eqref{est-time}, \eqref{est-A}, \eqref{est-F}, and
\eqref{est-g} to pass to the limit $\epsilon\downarrow 0$ in the
variational inequality \eqref{vari-ineq}. In this way, we deduce 
\begin{align*}
	&\frac12\int_{B_r^{(\theta)}} \mathfrak b\big[ \power um(\cdot, t_1), \power am\big] \dx +  
	\frac{\nu}{8}
	\int_{- r^{\frac{1+m}{m}}}^{t_1}\int_{B_r^{(\theta)}}  
	\big|D\power u{m}\big|^2 \dxdt \\
	&\quad\leq 
	c \iint_{Q_\rho^{(\theta)}}
	\bigg[
	\frac{\big| \power u {\frac{1+m}2}-\power a {\frac{1+m}2}\big|^2}{\rho^{\frac{1+m}{m}}-r^{\frac{1+m}{m}}}
	+ 
	\theta^{\frac{2m(1-m)}{1+m}} \frac{\big|\power u{m}-\power a{m}\big|^2}{(\rho-r)^2} 
	\bigg]  
	\dxdt \\
	&\quad\quad + c \iint_{Q_{\rho}^{(\theta)}}\big( |F|^{2}
        +\big|D\power \psi m \big|^2 + |\partial_t \power \psi m|^{\frac{1+m}{m}} + |g|^{1+m}+ |a|^{1+m}\big) \dxdt,
\end{align*}
for any $t_1 \in \Lambda_r$, where the constant $c>0$ depends on $m,
\nu,$ and $L$.
Finally, we take the supremum over $t_1 \in \Lambda_r$ on the
left-hand side and take means. In view of \eqref{boundary-term} this
leads to the result \eqref{energy:a}. The second assertion
\eqref{energy:a=0} then follows by letting $a=0$ and $r=\frac\rho2$.
\end{proof}

Our next goal is the proof of a so-called gluing lemma, which enables
us to compare mean values of the solution taken on different time slices.

\begin{lemma}\label{lem:time-diff}
  Let $Q_{\rho}^{(\theta)}(z_o)\subset\Omega_T$ with 
$0<\rho\le 1$, $\theta>0$, and $z_o\equiv(x_o,t_o)$. 
Under the assumptions of Lemma~\ref{lem:energy}, there exist $\hat{\rho}\in\big[\frac\rho2,\rho\big]$ and $c=c(n,m,\nu,L)>0$ such that
\begin{align}\label{gluing}\nonumber
&|\<u\>_{x_o;\hat{\rho}}(t_1)-\<u\>_{x_o;\hat{\rho}}(t_2)|\\\nonumber
&\qquad\le
c\rho^{\frac1m}\theta^{\frac{m(1-m)}{m+1}}\biint_{Q_\rho^{(\theta)}(z_o)}
\big(|D\power um|+|F|\big)\dxdt+c\mu \\\nonumber
&\qquad\quad+
c\mu^m\bigg[\biint_{Q_\rho^{(\theta)}(z_o)} \big(|u|^{1+m} 
+
\rho^{\frac{1-m}m}\theta^{\frac{2m(1-m)}{1+m}} |u|^{2m}\big)
\dxdt\bigg]^{\frac{1-m}{1+m}}\\\nonumber
&\qquad\quad+
c\rho^{\frac{1+m}m}\bigg[\biint_{Q_\rho^{(\theta)}(z_o)} |u|^{1+m} \dxdt \bigg]^{\frac{1-m}{2(1+m)}} \\\nonumber
&\qquad\qquad\qquad\times
\bigg[\biint_{Q_\rho^{(\theta)}(z_o)}\big(|u|^{1+m}+ |\partial_t \power \psi m|^{\frac{1+m}m}\big) \dxdt\bigg]^{\frac12} \\
&\qquad\quad+
  \frac{c\rho^{\frac{1+m}m}}{\mu^m}
  \biint_{Q_\rho^{(\theta)}(z_o)} G \dxdt
+
c\mu^m\bigg[\rho^{\frac{1+m}m}\biint_{Q_{\rho}^{(\theta)}(z_o)}
G \dxdt\bigg]^{\frac{1-m}{1+m}}
\end{align}
for all $t_1,t_2\in\Lambda_\rho(t_o)$,
where $\mu>0$ is an arbitrary parameter
and we used the abbreviation
\begin{equation}\label{def-G-2}
  G:=|F|^2+|D\power\psi m|^2+|\partial_t\power\psi m|^{\frac{1+m}m}+|g|^{1+m}.
\end{equation}
\end{lemma}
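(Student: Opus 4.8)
The statement is a gluing (or interpolation) lemma of the type pioneered by Gianazza--Schwarzacher, so the natural strategy is to test the variational inequality with a comparison map that is, up to the obstacle correction, independent of the spatial variable, so that the diffusion term only sees the gradient of a cut-off and the time term reproduces a difference of slicewise means. More precisely, I would fix a radius $\hat\rho$ to be chosen later in $[\rho/2,\rho]$, pick the space cut-off $\eta=\phi$ with $\phi\equiv1$ on $B^{(\theta)}_{\hat\rho}$ and $|D\phi|\lesssim\theta^{\frac{m(1-m)}{1+m}}/\rho$, and take the time cut-off $\alpha$ to be a Lipschitz approximation of $\mathbf 1_{(t_1,t_2)}$ (two one-sided Steklov-type cut-offs as in the proof of Lemma~\ref{lem:energy}, with the limit $\eps\downarrow0$ producing boundary terms at $t_1$ and $t_2$). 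The comparison function should be $\power vm=\power{(\langle u\rangle_{B_{\hat\rho}}(t))}{m}+(\power\psi m-\power{(\langle u\rangle_{B_{\hat\rho}}(t))}{m})_+$ truncated appropriately; the key point is that on the good set $\{\power um>\power\psi m+\mu^m\}$ the obstacle correction vanishes, and there $\power vm-\power um$ is, up to the mean-value subtraction, controlled, while on the bad set $\{\power um\le\power\psi m+\mu^m\}$ one simply estimates crudely and absorbs the resulting terms using $\mu$. This is exactly the source of the parameter $\mu$ and of the various $\mu$-dependent integrals in \eqref{gluing}.

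The technical heart of the argument is the choice of $\hat\rho$. As in all these gluing lemmas, one wants to choose $\hat\rho\in[\rho/2,\rho]$ so that the boundary integral $\int_{B^{(\theta)}_\rho}|D\phi|\,(\cdots)$ over the annulus $B^{(\theta)}_\rho\setminus B^{(\theta)}_{\hat\rho}$ is controlled by the full integral over $Q^{(\theta)}_\rho$; this follows from a mean-value / Fubini argument (the set of "bad" radii has small measure, so a good $\hat\rho$ exists), at the cost of the factor $\rho^{1/m}\theta^{\frac{m(1-m)}{1+m}}$ in the first line of \eqref{gluing} coming from $\int_{\Lambda_\rho}|D\phi|\,\diam B^{(\theta)}_\rho\,dt$. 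Once $\hat\rho$ is fixed, the diffusion term and the $F$-divergence term produce $\int_{Q^{(\theta)}_\rho}(|D\power um|+|F|)$ with the stated weight, the $g$-term and the obstacle gradient term are absorbed into $G$, and the time term gives
\[
  |\langle u\rangle_{\hat\rho}(t_1)-\langle u\rangle_{\hat\rho}(t_2)|\cdot\bigl|\,\mathrm{avg\ of\ }\power vm-\power um\,\bigr|
\]
after dividing; one still has to convert the $\partial_t\power\psi m$ contribution of \eqref{weak-time-term} into the double-integral terms in lines three and four of \eqref{gluing}, which is where Hölder in time together with the energy bound \eqref{energy:a=0} (with $a=0$) is used to trade powers of $|u|^{1+m}$ against powers of $|u|^{2m}$.

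The \textbf{main obstacle} I anticipate is precisely the bookkeeping on the bad set $\{\power um\le\power\psi m+\mu^m\}$: there one has no pointwise control of $u$ in terms of the obstacle from below (only $u\ge\psi$), so $\power vm-\power um$ need not be small, and one must instead estimate it by $|\power um-\power{(\langle u\rangle)}{m}|+\mu^m$, split the resulting terms, and choose the exponents in the subsequent Hölder and Young inequalities so that every term that carries a negative power of $\mu$ (the penultimate term $\mu^{-m}\rho^{\frac{1+m}m}\int G$) and every term that carries a positive power of $\mu$ appears exactly with the homogeneity dictated by \eqref{ratio-cylinders}. The exponents $\frac{1-m}{1+m}$ and $\frac{1-m}{2(1+m)}$ in \eqref{gluing} are forced by this scaling analysis, and getting them out cleanly requires careful use of Lemma~\ref{lem:a-b} (inequalities \eqref{a-b-1}, \eqref{a-b-2}), Lemma~\ref{lem:mvint} to pass between $\power{(u)_{B}}{m}$ and $(\power um)_{B}$, and the elementary bound $|u-a|^{1+m}\le\tfrac12|u-a|^{1+m}+c|\power u{\frac{1+m}2}-\power a{\frac{1+m}2}|^2+c|a|^{1+m}$ already isolated as \eqref{elementary} in the proof of Lemma~\ref{lem:energy}. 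Once the exponents match, the final estimate \eqref{gluing} follows by collecting the contributions of the six groups of terms.
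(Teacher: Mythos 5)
Your overall architecture (near\--constant\--in\--space comparison map, indicator\--type time cut\--off, mean\--value choice of $\hat\rho$ so that the annulus/surface term is controlled by the volume integral, and a parameter $\mu$ to handle the set where the solution is close to the obstacle) matches the paper's proof. However, there is a genuine gap in the central device: your comparison function $\power vm=\power{(\langle u\rangle_{B_{\hat\rho}}(t))}{m}+(\power\psi m-\power{(\langle u\rangle_{B_{\hat\rho}}(t))}{m})_+$ is not admissible and, worse, produces an uncontrollable term. Admissibility in \eqref{vari-ineq} requires $v\in K'_\psi$, i.e.\ $\partial_t\power vm\in L^{\frac{1+m}m}(\Omega_T)$, and the weak time term \eqref{weak-time-term} contains $-\alpha u\,\partial_t\power vm$. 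With your choice this derivative is $\partial_t\power{(\langle u\rangle_{B_{\hat\rho}}(t))}{m}$, which is exactly the quantity whose oscillation the lemma is trying to estimate; the argument becomes circular and the term cannot be absorbed. The paper instead takes $\power{v_h}{m}:=\max\{\llbracket\power um\rrbracket_h-\mu^m,\power\psi m\}$: the exponential time mollification makes $\partial_t\power{v_h}m$ well defined through the identity $\partial_t\llbracket\power um\rrbracket_h=\frac1h(\power um-\llbracket\power um\rrbracket_h)$, which yields a sign (the term $\mathrm I_h-\mathrm I_h^\ast$ is nonpositive up to an error vanishing as $h\downarrow0$), and the downward shift by $\mu^m$ is precisely what generates the factor $\mu^m$ multiplying $\iint\eta\alpha'u$, so that dividing by $\mu^m$ produces the mean difference together with the $\mu^{-m}$ and $\mu^{+m}$ terms in \eqref{gluing}. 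So $\mu$ is not merely a crude absorption device on the bad set; it is the shift in the comparison function, and the bad set $\{\power um\le\power\psi m+\mu^m\}$ is exactly where the obstacle correction is active.

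A second, smaller omission: the variational inequality only gives a one\--sided bound, so to obtain the absolute value $|\langle u\rangle_{\hat\rho}(t_1)-\langle u\rangle_{\hat\rho}(t_2)|$ you need a second comparison function for the opposite direction; the paper uses $\max\{\llbracket\power um\rrbracket_h+1,\power\psi m\}$, for which the obstacle set is empty and the estimate is much simpler. Your treatment of the bad set is also only sketched: the slicewise boundary term $\mathfrak b[\power um,\power\psi m+\mu^m]$ at time $t_1$ must be bounded via $u\ge\psi$ by $c(|u|^{1-m}+\mu^{1-m})\mu^{2m}$ and then by the energy estimate \eqref{energy:a=0}, which is where the exponent $\frac{1-m}{1+m}$ and the $|u|^{2m}$ term in \eqref{gluing} actually come from. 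Your wide\--annulus Fubini selection of $\hat\rho$ is a harmless variant of the paper's thin\--annulus/surface\--integral argument.
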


\begin{proof}
For notational convenience, we assume that $z_o$ is the origin, and we
omit the origin in our notation by writing $B_\rho^{(\theta)}:=B_\rho^{(\theta)}(0)$, $\Lambda_\rho:=\Lambda_\rho(0)$, and $Q_\rho^{(\theta)}:=Q_\rho^{(\theta)}(0)=B_\rho^{(\theta)}\times\Lambda_\rho$. 
Moreover, without loss of generality we suppose that
$t_1,t_2\in\Lambda_\rho$ are ordered in the way that $t_1<t_2$ holds true.
For $r \in [\frac\rho2, \rho]$, $0<\delta\ll1$, and $0<\varepsilon\ll 1$, we
define a cut-off function 
$\alpha\in W^{1,\infty}_0(\Lambda_\rho)$ in time by letting
$$
	\alpha(t)
	:=
	\left\{
	\begin{array}{cl}
	0 ,& \mbox{for $-\rho^{\frac{1+m}{m}}< t\le t_1-\epsilon$,}\\[3pt]
	\frac{t-t_1+\epsilon}{\epsilon}, & \mbox{for $t_1-\epsilon< t< t_1$,}\\[3pt]
	1 ,& \mbox{for $t_1\le t\le t_2$,}\\[3pt]
	\frac{t_2+\epsilon-t}{\epsilon}, & \mbox{for $t_2< t< t_2+\epsilon$,}\\[3pt]
	0 ,& \mbox{for $t_2+\epsilon\le t < \rho^{\frac{1+m}{m}}$,}
	\end{array}
	\right.
$$
and a radial cut-off function in space $\eta \in W^{1,\infty}_0(B_\rho^{(\theta)})$ by 
$$
	\eta(x)
	:=
	\left\{
	\begin{array}{cl}
	1 ,& \mbox{for $0\le |x|\le r\theta^{\frac{m(m-1)}{1+m}}$,}\\[3pt]
	\frac1\delta\big(r\theta^{\frac{m(m-1)}{1+m}}+\delta-|x|\big), & \mbox{for $r\theta^{\frac{m(m-1)}{1+m}}< |x|< r\theta^{\frac{m(m-1)}{1+m}}+\delta$,}\\[3pt]
	0 ,& \mbox{for $r\theta^{\frac{m(m-1)}{1+m}}+\delta\le |x| < \rho\theta^{\frac{m(m-1)}{1+m}}$.}
	\end{array}
	\right.
$$
We next define a comparison function $\power {v_h} m$ for the variational inequality \eqref{vari-ineq} by
\begin{equation*}
  \power{v_h}{m}:=\max\{\llbracket
  \power{u}{m}\rrbracket_h-\mu^m,\power{\psi}{m}\}
  =
  \llbracket \power{u}{m}\rrbracket_h-\mu^m+\big(\power{\psi}{m}+\mu^m-\llbracket
  \power{u}{m}\rrbracket_h\big)_+\,,
\end{equation*}
for a parameter $\mu>0$, where we used the time mollification defined
in~\eqref{time-molli}. With this choice of the comparison function,
the variational inequality implies 
\begin{align}\label{var-in-obst}
  0&\le
  \iint_{\Omega_T}\big[\eta\alpha'\big(\tfrac1{1+m}|u|^{1+m}-u\power{v_h}{m}\big)
  -\eta\alpha u\partial_t\power{v_h}{m}\big] \dxdt\\\nonumber
  &\qquad+
  \iint_{\Omega_T}\alpha \mathbf{A}(x,t,u,D\power{u}{m})\cdot
  D(\eta(\power{v_h}{m}-\power um))\dxdt\\\nonumber
  &\qquad - \iint_{\Omega_T} \alpha F(x,t) \cdot D(\eta(\power{v_h}{m}-\power um))\dxdt\\\nonumber
 &\qquad - \iint_{\Omega_T}  \alpha\eta g(x,t) (\power{v_h}{m}-\power um) \dxdt \\\nonumber
  &=:\mathrm{I}_h+\mathrm{II}_h+\mathrm{III}_h+\mathrm{IV}_h.
\end{align}
First, we note that the term $\mathrm{I}_h$ can be replaced by the
modified version 
\begin{align}\label{I-h-ast-01}
  \mathrm{I}^\ast_h
  :=
  \iint_{\Omega_T}\big[\eta\alpha'\big(\tfrac1{1+m}|u|^{1+m}-u\power{v_h}{m}\big)-\eta\alpha \power{\llbracket u^m\rrbracket_h}{\frac1m}\partial_t\power{v_h}{m}\big]\dxdt.
\end{align}
In fact, using the identity $\partial_t\llbracket
\power um\rrbracket_h=\frac1h(u-\llbracket \power um\rrbracket_h)$, we can
rewrite the difference of the two terms as follows. 
\begin{align*}
  \mathrm{I}_h-\mathrm{I}_h^\ast
  &=
  \iint_{\Omega_T}\eta\alpha(\power{\llbracket u^m\rrbracket_h}{\frac1m}-u)\partial_t\power{v_h}{m}\dxdt\\
  &=
  \iint_{\Omega_T\cap\{\llbracket\power{u}{m}\rrbracket_h>\power\psi
    m+\mu^m\}}
  \eta\alpha(\power{\llbracket u^m\rrbracket_h}{\frac1m}-u)\tfrac1h(\power um-\llbracket\power{u}{m}\rrbracket_h)\dxdt\\
  &\qquad+
  \iint_{\Omega_T\cap\{\llbracket\power{u}{m}\rrbracket_h\le\power\psi
    m+\mu^m\}}\eta\alpha(\power{\llbracket u^m\rrbracket_h}{\frac1m}-u)\partial_t\power{\psi}{m}\dxdt\\
  &\le
  \iint_{\Omega_T}\eta\alpha\big|\power{\llbracket u^m\rrbracket_h}{\frac1m}-u\big||\partial_t\power{\psi}{m}|\dxdt\\
  &\to0
\end{align*}
in the limit $h\downarrow0$, because $\partial_t\power\psi m\in
L^{\frac{1+m}m}(\Omega_T)$ and $\llbracket\power{u}{m}\rrbracket_h\to\power um$ in $L^{\frac{1+m}m}(\Omega_T)$. 
For the estimate of the 
 term $\mathrm{I}_h^\ast$, we apply the definition of $\power{v_h}{m}$, the chain rule, and
an integration by parts, with the result 
\begin{align}\label{I-h-ast}
  \mathrm{I}_h^\ast
  &=
   \iint_{\Omega_T}\eta\alpha'\Big[\tfrac1{1+m} |u|^{1+m}-u\llbracket\power{u}{m}\rrbracket_h+u\mu^m-u\big(\power\psi
  m+\mu^m-\llbracket\power{u}{m}\rrbracket_h\big)_+\Big]\dxdt\nonumber\\\nonumber
  &\qquad-
  \iint_{\Omega_T}\eta\alpha\tfrac{m}{1+m}
  \partial_t \big|\llbracket\power{u}{m}\rrbracket_h\big|^{\frac{1+m}{m}}\dxdt\\\nonumber
 &\qquad-
   \iint_{\Omega_T}\eta\alpha\power{\llbracket u^m\rrbracket_h}{\frac1m}\partial_t\big(\power\psi
  m+\mu^m-\llbracket\power{u}{m}\rrbracket_h\big)_+\dxdt\\\nonumber
  &=
  \mu^m\iint_{\Omega_T}\eta\alpha'u \dxdt\\\nonumber
  &\qquad
  +\iint_{\Omega_T}\eta\alpha'\big(\tfrac1{1+m}
  |u|^{1+m}-u\llbracket\power{u}{m}\rrbracket_h+\tfrac{m}{1+m}\big| \llbracket\power{u}{m}\rrbracket_h \big|^{\frac{1+m}{m}}\big)\dxdt\\\nonumber
  &\qquad
  -\iint_{\Omega_T}\eta\alpha' u\big(\power\psi m+\mu^m-
  \llbracket\power{u}{m}\rrbracket_h\big)_+\dxdt\\\nonumber
  &\qquad+
  \iint_{\Omega_T\cap\{\llbracket\power{u}{m}\rrbracket_h\le\power\psi
    m+\mu^m\}}\eta\alpha\power{\llbracket u^m\rrbracket_h}{\frac1m}\big(\partial_t\llbracket\power{u}{m}\rrbracket_h-\partial_t\power\psi
  m\big)\dxdt\\
  &=:\mathrm{I_{1}}+\mathrm{I_{2,h}}+\mathrm{I_{3,h}}+\mathrm{I_{4,h}}.
\end{align}
Because $\llbracket\power{u}{m}\rrbracket_h$ converges to
$\power um$ in $L^{\frac{1+m}m}(\Omega_T)$ as $h\downarrow0$, we observe
\begin{equation}\label{convergence-I-2}
  \lim_{h\downarrow0}\mathrm{I}_{2,h}=0
\end{equation}
and 
\begin{equation}\label{convergence-I-3}
  \lim_{h\downarrow0}\mathrm{I}_{3,h}
  =
  -\iint_{\Omega_T}\eta\alpha'u\big(\power\psi m+\mu^m-
  \power{u}{m}\big)_+\dxdt.
\end{equation}
Next, we rewrite the term $\mathrm{I}_{4,h}$ as
\begin{align*}
  \mathrm{I}_{4,h}
  &=
  \iint_{\Omega_T\cap\{\llbracket\power{u}{m}\rrbracket_h\le\power\psi
    m+\mu^m\}}
  \eta\alpha\Big[\power{\llbracket u^m\rrbracket_h}{\frac1m}
  \partial_t\llbracket\power{u}{m}\rrbracket_h
  -\power{(\psi^m+\mu^m)}{\frac1m}\partial_t\power\psi
  m\Big]\dxdt \\
  &\qquad +
  \iint_{\Omega_T\cap\{\llbracket\power{u}{m}\rrbracket_h\le\power\psi
    m+\mu^m\}}
  \eta\alpha\partial_t\power\psi m\Big[\power{(\psi^m+\mu^m)}{\frac1m}
  -\power{\llbracket u^m\rrbracket_h}{\frac1m}
  \Big]\dxdt\\
  &=
  -\iint_{\Omega_T\cap\{\llbracket\power{u}{m}\rrbracket_h\le\power\psi
    m+\mu^m\}}\eta\alpha\tfrac m{1+m} \partial_t\Big(\big|\power\psi
  m+\mu^m\big|^{\frac{1+m}m}
  -\big| \llbracket \power um\rrbracket_h \big|^{\frac{1+m}m}\Big)\dxdt\\
  &\qquad +
  \iint_{\Omega_T}
  \eta\alpha\partial_t\power\psi m\Big[\power{(\psi^m+\mu^m)}{\frac1m}
  -\power{\llbracket u^m\rrbracket_h}{\frac1m}
  \Big]_+\dxdt\\
    &=
  -\iint_{\Omega_T}\eta\alpha\tfrac m{1+m} \partial_t\Big[ \big| \llbracket \power um \rrbracket_h + (\power\psi
  m+\mu^m-\llbracket \power um \rrbracket_h)_+\big|^{\frac{1+m}m}
  -\big| \llbracket \power um\rrbracket_h \big|^{\frac{1+m}m}\Big]\dxdt\\
  &\qquad +
  \iint_{\Omega_T}
  \eta\alpha\partial_t\power\psi m\Big[\power{(\psi^m+\mu^m)}{\frac1m}
  -\power{\llbracket u^m\rrbracket_h}{\frac1m}
  \Big]_+\dxdt,
\end{align*}
where we used the chain rule for the second step.
We now perform an integration by parts and then let $h\downarrow0$,
which yields
\begin{align}\label{convergence-I-4}
\nonumber
  \lim_{h\downarrow0}\mathrm{I}_{4,h}
 &=
  \iint_{\Omega_T\cap\{\power um \le \power \psi m + \mu^m\}}\eta\alpha' \tfrac m{1+m} \big(|\power\psi m+\mu^m|^{\frac{1+m}m}-|u|^{1+m}
  \big)\dxdt\\
  &\qquad+\iint_{\Omega_T}
  \eta\alpha \partial_t\power\psi m \big(\power{(\psi^m+\mu^m)}{\frac1m}
  -u \big)_+\dxdt.
\end{align}
For the last integral, we deduce from $u\ge\psi$ a.e. that
\begin{align}\label{on_the_set_psi}
|\psi|^m \le |u|^m + |\power \psi m-\power um| \le |u|^m+\mu^m
\quad\mbox{a.e.}
\end{align}
on the set $\{\power um\le\power\psi m+\mu^m\}$.
By applying this and \eqref{a-b-1} we get
\begin{align*}
\nonumber
	\big( \power{(\psi^m +\mu^m)}{\frac1m} - \power{(u^m)}{\frac1m} \big)_+
	& \le
	c\big( | \psi |^m +|\mu|^m + | u|^m \big)^{\frac1m-1}(\power \psi m + \mu^m -\power um)_+ \\\nonumber
	& \le
	c\big( |u|^{ m}+\mu^m\big)^{\frac1m-1}\mu^m\\
	&\le
	c\mu^m |u|^{1-m} + c\mu,
\end{align*}
which implies by H\"older's inequality and Young's inequality that
\begin{align*}
	&\iint_{\Omega_T} \eta\alpha \partial_t \power \psi m  \big(\power{(\psi^m+\mu^m)}{\frac1m}
  -u \big)_+\dxdt \\\nonumber
	&\quad\quad\le
	c\mu^m\iint_{\Omega_T\cap\{\power um \le \power \psi m + \mu^m\}} \eta\alpha |u|^{\frac{1-m}2}|u|^{\frac{1-m}2}\big| \partial_t \power \psi m\big| \dxdt \\\nonumber
	&\quad\quad\quad\quad+
	c\mu\iint_{\Omega_T\cap\{\power um \le \power \psi m + \mu^m\}} \eta\alpha \big|\partial_t \power \psi m\big| \dxdt\\\nonumber
	&\quad\quad\le
	c\mu^m \bigg[\iint_{\Omega_T\cap\{\power um \le \power \psi m + \mu^m\}}\eta\alpha |u|^{1-m} \dxdt \bigg]^{\frac12}\\\nonumber
	&\quad\quad\quad\quad\times
	\bigg[\iint_{\Omega_T\cap\{\power um \le \power \psi m + \mu^m\}}\eta\alpha |u|^{1-m} |\partial_t \power \psi {m}|^{2} \dxdt\bigg]^{\frac{1}{2}} \\\nonumber
	&\quad\quad\quad+
	c\mu\iint_{\Omega_T\cap\{\power um \le \power \psi m + \mu^m\}} \eta\alpha \big|\partial_t \power \psi m\big| \dxdt\\\nonumber
	&\quad\quad\le
	c\mu^m|Q_\rho^{(\theta)}| \bigg[\biint_{Q_\rho^{(\theta)}} |u|^{1+m} \dxdt \bigg]^{\frac{1-m}{2(1+m)}}
	\bigg[\biint_{Q_\rho^{(\theta)}} \big( |u|^{1+m} + |\partial_t \power \psi {m}|^{\frac{1+m}m}\big) \dxdt\bigg]^{\frac{1}{2}} \\\nonumber
	&\quad\quad\quad\quad+
	c\mu\iint_{Q_\rho^{(\theta)}}\alpha\eta \big|\partial_t \power \psi m\big| \dxdt.
\end{align*}
We then insert this into \eqref{convergence-I-4} to obtain
\begin{align}\label{convergence-I-4-1}
 \lim_{h\downarrow0}\mathrm{I}_{4,h}
  &\le
  \iint_{\Omega_T\cap\{\power um \le \power \psi m + \mu^m\}}\eta\alpha' \tfrac m{1+m} \big(|\power\psi m+\mu^m|^{\frac{1+m}m}-|u|^{1+m}
  \big)\dxdt\\\nonumber
  &\qquad+
  c\mu^m|Q_\rho^{(\theta)}| \bigg[\biint_{Q_\rho^{(\theta)}} |u|^{1+m} \dxdt \bigg]^{\frac{1-m}{2(1+m)}}\\\nonumber
	&\quad\quad\quad\quad\times
	\bigg[\biint_{Q_\rho^{(\theta)}} \big( |u|^{1+m} + |\partial_t \power \psi {m}|^{\frac{1+m}m}\big) \dxdt\bigg]^{\frac{1}{2}} \\\nonumber
	&\quad\quad+
	c\mu\iint_{Q_\rho^{(\theta)}}\alpha\eta\big|\partial_t \power \psi m\big| \dxdt.
\end{align}
Collecting \eqref{I-h-ast-01}-\eqref{convergence-I-3}, and
\eqref{convergence-I-4-1}, and keeping in mind the definition of
$\mathfrak b$ in~\eqref{boundary-term}, we conclude
\begin{align}\label{limsup-I-h}
  &\limsup_{h\downarrow0}\mathrm{I}_h
  \le
  \lim_{h\downarrow0}\mathrm{I}_h^\ast\\\nonumber
  &\qquad\le
  \mu^m\iint_{\Omega_T}\eta\alpha'u\,\dxdt
  +\iint_{\Omega_T\cap\{\power um \le \power \psi m +\mu^m\}}\eta\alpha'\mathfrak{b}[\power um,\power\psi
  m+\mu^m]\dxdt\\\nonumber
  &\qquad\qquad+
  c\mu^m|Q_\rho^{(\theta)}| \bigg[\biint_{Q_\rho^{(\theta)}} |u|^{1+m} \dxdt \bigg]^{\frac{1-m}{2(1+m)}}\\\nonumber
	&\qquad\qquad\qquad\times
	\bigg[\biint_{Q_\rho^{(\theta)}} \big( |u|^{1+m} + |\partial_t \power \psi {m}|^{\frac{1+m}m}\big) \dxdt\bigg]^{\frac{1}{2}} \\\nonumber
	&\qquad\qquad+
	c\mu\iint_{Q_\rho^{(\theta)}}  \big|\partial_t \power \psi m\big| \dxdt.
\end{align}
We next estimate the term $\mathrm{II}_h$. Thanks to the
convergence properties of the time mollification, we infer 
\begin{align*}
  \lim_{h\downarrow0}\mathrm{II}_h 
  &\ =
   \iint_{\Omega_T}\alpha
    \mathbf{A}(x,t,u,D\power um)\cdot D\eta (-\mu^m+(\power\psi
    m+\mu^m-\power um)_+)\dxdt\\
    &\qquad+
    \iint_{\Omega_T}\alpha
    \eta \mathbf{A}(x,t,u,D\power um)\cdot D(\power\psi m+\mu^m-\power um)_+\dxdt
   \\
  &\le
  \mu^m\iint_{\Omega_T}\alpha |\mathbf{A}(x,t,u,D\power um)|\,|D\eta|\,\dxdt\\
  &\qquad
  +
  \iint_{\Omega_T\cap\{\power um \le \power \psi m +\mu^m\}}
  \alpha\eta \mathbf{A}(x,t,u,D\power um)\cdot D(\power\psi m-\power um)\dxdt,
\end{align*}
where we used the obstacle constraint $u\ge\psi$ a.e. for the last estimate.
Next, we employ the growth and coercivity assumption \eqref{A-struc},
and then Young's inequality in order to obtain 
\begin{align}\label{convergence-II-2}
  \lim_{h\downarrow0}\mathrm{II}_h
  &\le
  \mu^mL\iint_{\Omega_T}\alpha |D\power um|\,|D\eta|\,\dxdt\\\nonumber
  &\qquad+
  \frac{L^2}{2\nu}\iint_{\Omega_T\cap\{\power um \le \power \psi m +\mu^m\}}\alpha\eta  |D\power\psi m|^2\dxdt \\\nonumber
  &\qquad-
  \frac{\nu}{2}\iint_{\Omega_T\cap\{\power um \le \power \psi m +\mu^m\}}\alpha\eta  |D\power um|^2\dxdt.
\end{align}
The term  $\mathrm{III}_h$ is estimated 
in a similar way as $\mathrm{II}_h$, which yields 
\begin{align}\label{III-h}
  \lim_{h\downarrow0}\mathrm{III}_h 
 &\ =
   -\iint_{\Omega_T}\alpha
   F(x,t)\cdot D\eta (-\mu^m+(\power\psi
    m+\mu^m-\power um)_+)\dxdt \nonumber\\
    &\qquad-
    \iint_{\Omega_T}\alpha
    \eta F(x,t)\cdot D(\power \psi m+\mu^m-\power um)_+\dxdt \nonumber\\
   &\ \le
   \mu^m \iint_{\Omega_T} \alpha |F||D\eta| \dxdt + 
   \frac{\nu+1}{2\nu}\iint_{\Omega_T\cap\{\power um\le\power \psi m+\mu^m\}} \alpha \eta |F|^2\dxdt \nonumber\\\nonumber
   &\qquad 
   +\frac12 \iint_{\Omega_T\cap\{\power um\le\power \psi m+\mu^m\}} \alpha \eta |D\power\psi m|^2 \dxdt \\
   &\qquad+ 
   \frac{\nu}2\iint_{\Omega_T\cap\{\power um\le\power\psi m+\mu^m\}} \alpha \eta |D\power um|^2 \dxdt.
\end{align}
Next, the last term $\mathrm{IV}_h$ is estimated by
\begin{align}\label{IV-h}
  \lim_{h\downarrow0}\mathrm{IV}_h
  &\ =
  -\iint_{\Omega_T} \alpha \eta g (-\mu^m + (\power\psi m + \mu^m - \power um)_+) \dxdt \\\nonumber
  &\ \le
  \mu^m \iint_{\Omega_T} \alpha \eta |g| \dxdt.
\end{align}
According to \eqref{limsup-I-h}-\eqref{IV-h},
the variational inequality \eqref{var-in-obst} implies that
\begin{align*}
	0&\le
	\mu^m\iint_{\Omega_T}\eta\alpha'u\,\dx\dt
	+ c\mu^m\iint_{\Omega_T}\alpha (|D\power um|+|F|)|D\eta|\,\dxdt\\
	&\qquad
	+\iint_{\Omega_T\cap\{\power um\le\power\psi m+\mu^m\}}
	\eta\alpha'\mathfrak{b}[\power um,\power\psi m+\mu^m]\dxdt\\
	&\qquad
	+
	c\mu^m|Q_\rho^{(\theta)}| \bigg[\biint_{Q_\rho^{(\theta)}} |u|^{1+m} \dxdt \bigg]^{\frac{1-m}{2(1+m)}}\\
	&\quad\quad\quad\times
	\bigg[\biint_{Q_\rho^{(\theta)}} \big( |u|^{1+m} + |\partial_t \power \psi {m}|^{\frac{1+m}m}\big) \dxdt\bigg]^{\frac{1}{2}}\\
	&\qquad
	+
	c\iint_{\Omega_T}\alpha\eta\big(\mu\big|\partial_t\power\psi m\big|+|D\power\psi m|^2 + |F|^2+\mu^m|g|\big) \dxdt
\end{align*}
for some $c=c(m,\nu,L)>0$.
By means of Young's inequality, the last integrand can be estimated
by
  \begin{equation*}
    \mu\big|\partial_t\power\psi m\big|+|D\power\psi m|^2 +
    |F|^2+\mu^m|g|
    \le
    G+\mu^{1+m},
  \end{equation*}
with the function $G$ defined in \eqref{def-G-2}.
Recalling the definitions of the cut-off
functions $\alpha$ and $\eta$ and passing to the limit
$\delta\downarrow0$ and $\eps\downarrow0$, we deduce
\begin{align}\label{pre-gluing-2}
  &\mu^m\int_{B_r^{(\theta)}\times\{t_2\}}u\,\dxdt
  +
  \int_{(B_r^{(\theta)}\times\{t_2\})\cap\{\power um \le \power \psi m+\mu^m\}}\mathfrak{b}[\power
  um,\power\psi m+\mu^m]\,\dx \nonumber\\\nonumber
  &\qquad\le
  \mu^m\int_{B_r^{(\theta)}\times\{t_1\}}u\,\dxdt
  +
  c\mu^m\int_{t_1}^{t_2}\int_{\partial B_r^{(\theta)}}(|D\power um| + |F|) \d\mathcal{H}^{n-1}\dt\\\nonumber
  &\qquad\qquad+
  \int_{(B_r^{(\theta)}\times\{t_1\})\cap\{\power um \le \power \psi m+\mu^m\}}\mathfrak{b}[\power
  um,\power\psi m+\mu^m]\,\dx\\\nonumber
  &\qquad\qquad
  +
 	c\mu^m|Q_\rho^{(\theta)}| \bigg[\biint_{Q_\rho^{(\theta)}} |u|^{1+m} \dxdt \bigg]^{\frac{1-m}{2(1+m)}}\\\nonumber
	&\qquad\qquad\qquad\times
	\bigg[\biint_{Q_\rho^{(\theta)}} \big( |u|^{1+m} + |\partial_t \power \psi {m}|^{\frac{1+m}m}\big) \dxdt\bigg]^{\frac{1}{2}}\\
	&\qquad\qquad
	+
	c\iint_{Q_\rho^{(\theta)}}\big(G+\mu^{1+m}
        \big) \dxdt 
\end{align}
for a.e. $r\in [\frac\rho2,\rho]$, where $\mathcal{H}^{n-1}$ denotes
the $(n-1)$-dimensional Hausdorff measure. 
Observing $\mathfrak{b}[\cdot,\cdot]\ge0$,
we discard the second term on the left-hand side, and then divide by $\mu^m|B_r^{(\theta)}|$ on both sides to deduce 
\begin{align}\label{u-(t_2-t_1)}\nonumber
  \<u\>_r(t_2)
  &\le
  \<u\>_r(t_1)
  +
  c\rho^{\frac1m}\theta^{\frac{m(1-m)}{1+m}}\mint_{\Lambda_\rho}\mint_{\partial
    B_r^{(\theta)}}(|D\power um| + |F|) \ \d\mathcal{H}^{n-1}\dt\\\nonumber
  &\qquad+
  \frac1{\mu^m\big|B_r^{(\theta)}\big|}\int_{(B_r^{(\theta)}\times\{t_1\})\cap\{\power um \le \power \psi m+\mu^m\}}\mathfrak{b}[\power
  um,\power\psi m+\mu^m]\,\dx\\\nonumber
  &\qquad
  +
   	c\rho^\frac{1+m}m \bigg[\biint_{Q_\rho^{(\theta)}} |u|^{1+m} \dxdt \bigg]^{\frac{1-m}{2(1+m)}}\\\nonumber
	&\quad\quad\quad\quad\times
	\bigg[\biint_{Q_\rho^{(\theta)}} \big( |u|^{1+m} + |\partial_t \power \psi {m}|^{\frac{1+m}m}\big) \dxdt\bigg]^{\frac{1}{2}}\\
	&\qquad
	+
	\frac{c\rho^\frac{1+m}m}{\mu^m}\biint_{Q_\rho^{(\theta)}}G
        \dxdt +c\mu
\end{align}
with $c=c(n,m,\nu,L)$. In the last step, we also used the assumption
$\rho\le1$. 
Meanwhile, from \eqref{boundary-term}, \eqref{a-b-1}, and
\eqref{on_the_set_psi} we infer 
\begin{align*}
\mathfrak{b}[\power um,\power\psi m+\mu^m]
&\le
c\big| \power u{\frac{1+m}2} - \power{(\psi^m + \mu^m)}{\frac{1+m}{2m}} \big|^2\\
&\le
c(|u|^m+|\psi|^m+\mu^m)^{\frac{1-m}m}|\power um-\power\psi m -\mu^m|^2\\
&\le
c(|u|^{1-m}+\mu^{1-m})\mu^{2m}
\end{align*}
on the set $\{\power um \le \power \psi m+\mu^m\}$, which implies in turn
\begin{align*}
&\frac1{\mu^m\big|B_r^{(\theta)}\big|}\int_{(B_r^{(\theta)}\times\{t_1\})\cap\{\power um \le \power \psi m+\mu^m\}}\mathfrak{b}[\power
  um,\power\psi m+\mu^m]\,\dx\\
&\qquad\le
c\mu^m\bint_{B_r^{(\theta)}\times \{t_1\}} |u|^{1-m} \dx + c\mu \\
&\qquad\le
c\mu^m\left(\bint_{B_r^{(\theta)}\times \{t_1\}} |u|^{1+m} \dx\right)^{\frac{1-m}{1+m}} + c\mu.
\end{align*}
Using the energy estimate~\eqref{energy:a=0} from
Lemma~\ref{lem:energy}, we obtain
\begin{align*}
&\frac1{\mu^m|B_r^{(\theta)}|}\int_{(B_r^{(\theta)}\times\{t_1\})\cap\{\power um \le \power \psi m+\mu^m\}}\mathfrak{b}[\power
  um,\power\psi m+\mu^m]\,\dx\\
&\qquad\le
c\mu^m\bigg[\biint_{Q_\rho^{(\theta)}} \big[|u|^{1+m} +
\rho^{\frac{1-m}m}\theta^{\frac{2m(1-m)}{1+m}} |u|^{2m}\big] \dxdt\bigg]^{\frac{1-m}{1+m}} \\
&\quad\qquad
+
c\mu^m\bigg[\rho^{\frac{1+m}m}\biint_{Q_{\rho}^{(\theta)}}G
\dxdt\bigg]^{\frac{1-m}{1+m}} + c\mu.
\end{align*}
Inserting this into \eqref{u-(t_2-t_1)} we have 
\begin{align}\label{u-(t_2-t_1)-2}\nonumber
&\<u\>_{x_o;r}(t_2)-\<u\>_{x_o;r}(t_1)\\\nonumber
&\qquad\le
c\rho^{\frac1m}\theta^{\frac{m(1-m)}{1+m}}\bint_{\Lambda_\rho}\bint_{\partial B_r^{(\theta)}} \big(|Du^m|+|F|\big)\,\d\mathcal{H}^{n-1}\dt +c\mu\\\nonumber
&\qquad\qquad+
c\mu^m\bigg[\biint_{Q_\rho^{(\theta)}}\big[ |u|^{1+m} 
+
\rho^{\frac{1-m}m}\theta^{\frac{2m(1-m)}{1+m}} |u|^{2m} \big]\dxdt\bigg]^{\frac{1-m}{1+m}}  \\\nonumber
&\qquad\qquad+
c\rho^{\frac{1+m}m}\bigg[\biint_{Q_\rho^{(\theta)}} |u|^{1+m} \dxdt \bigg]^{\frac{1-m}{2(1+m)}} \\\nonumber
&\qquad\qquad\qquad\times
\bigg[\biint_{Q_\rho^{(\theta)}}|u|^{1+m}+ |\partial_t \power \psi m|^{\frac{1+m}m} \dxdt\bigg]^{\frac12} \\
&\qquad\qquad+
  \frac{c\rho^{\frac{1+m}m}}{\mu^m}
  \biint_{Q_\rho^{(\theta)}}
  G\dxdt
+
c\mu^m\bigg[\rho^{\frac{1+m}m}\biint_{Q_{\rho}^{(\theta)}} G
\dxdt\bigg]^{\frac{1-m}{1+m}}
\end{align}
for a.e. $r\in\big[ \frac\rho2, \rho \big]$.

In the same manner we next construct the inequality of the other side, using the comparison function
\begin{equation*}
  \power{v_h}{m}:=\max\{\llbracket
  \power{u}{m}\rrbracket_h+1,\power{\psi}{m}\}.
\end{equation*}
We then argue as above, replacing $\mu^m$ by $-1$. In this case,
however, the estimation is much simpler. This is due to the fact that 
the set $\{\power um\le\power \psi m-1\}$ is empty by
the obstacle constraint, and therefore all terms that arise from  
integrals over $\{\power um\le\power \psi m-1\}$ can be
neglected in this case. 
In this way, we arrive at the estimate
\begin{align}\label{u-(t_1-t_2)}\nonumber
\<u\>_{x_o;r}(t_1)-\<u\>_{x_o;r}(t_2)
  &\le
  c\rho^{\frac1m}\theta^{\frac{m(1-m)}{1+m}}\bint_{\Lambda_\rho}\bint_{\partial B_r^{(\theta)}}(|D\power um| + |F|)\,\d\mathcal{H}^{n-1}\dt \\
  &\qquad+
 \rho^{\frac{1+m}m} \biint_{Q_\rho^{(\theta)}} |g| \dxdt
\end{align}
for a.e. $r\in[\frac\rho2, \rho]$.

Meanwhile, the mean value theorem for integrals implies
\begin{align}
\bint_{\Lambda}\bint_{\partial B_{\hat{\rho}}^{(\theta)}} |Du^m|+|F|\,\d\mathcal{H}^{n-1}\dt 
\le
\biint_{Q_\rho^{(\theta)}} |Du^m|+|F|\dxdt
\end{align}
for some $\hat{\rho}\in\big[ \frac\rho2 , \rho \big]$.
We then apply \eqref{u-(t_2-t_1)-2} and \eqref{u-(t_1-t_2)} with $r=\hat{\rho}$ and combine both inequalities to obtain the desired estimate.
\end{proof}

The next step is the proof of a Sobolev-Poincar\'e inequality on
sub-intrinsic cylinders. This can be established in two cases, which
can be interpreted as the singular and the non-singular case.
The first one is characterized by the fact that the mean value
of $u$ is bounded in terms of the oscillation of $u$ in the sense
of \eqref{intrinsic_2} below. It turns out that in the other case, it
is possible to construct cylinders that are even intrinsic in the
sense that \eqref{sub-intrinsic-1} and \eqref{intrinsic_1} are both
valid at the same time. Therefore, the following
result will be applicable on any suitably constructed cylinder.

\begin{lemma}\label{lem:sobolev-poin}
Let $\frac{(n-2)_+}{n+2}<m\le1$ and $u\in K_\psi$ be a local weak
solution to the variational inequality \eqref{vari-ineq} under assumptions \eqref{obs-func}, \eqref{inhom-func}, and \eqref{A-struc}. 
For $0<\rho\le1$, $\theta>0$, and $z_o=(x_o,t_o)$, 
we consider parabolic cylinders $Q_\rho^{(\theta)}(z_o)\subset\Omega_T$
which satisfy a sub-intrinsic coupling in the sense that
\begin{align}\label{sub-intrinsic-1}
\biint_{Q_{\rho}^{(\theta)}(z_o)} \frac{|u|^{1+m}}{\rho^{\frac{1+m}{m}}} \dxdt
\le
B\theta^{2m}
\end{align}
for some constant $B\ge1$.
Furthermore, we assume that either
\begin{align}\label{intrinsic_1}
\theta^{2m} \le B\biint_{Q_{\rho}^{(\theta)}(z_o)} \frac{|u|^{1+m}}{\rho^{\frac{1+m}{m}}} \dxdt
\end{align}
or
\begin{align}\label{intrinsic_2}
\theta^{2m} \le B\biint_{Q_{\rho}^{(\theta)}(z_o)} \big[ |D\power um|^2+G+|u|^{1+m}\big] \dxdt
\end{align}
hold true, where we used the abbreviation
$$G:=|F|^2+|g|^{1+m} +|D\power\psi m|^2+|\partial_t\power\psi m|^{\frac{1+m}m}.$$
Then, for any $\varepsilon\in(0,1)$ we have the estimate
\begin{align}\label{desired-est}
\nonumber
&\biint_{Q_\rho^{(\theta)}(z_o)} \frac{\big| \power u{\frac{1+m}2} -\big(\power u{\frac{1+m}2}\big)_\rho^{(\theta)} \big|^2}{\rho^{\frac{1+m}m}} \dxdt\\\nonumber
&\le
\epsilon\bigg[ \sup_{t\in\Lambda_\rho(t_o)} \bint_{B_\rho^{(\theta)}(x_o)} \frac{\big| \power u{\frac{1+m}2} -\big(\power u{\frac{1+m}2}\big)_\rho^{(\theta)} \big|^2}{\rho^{\frac{1+m}m}} \dx
+
 \biint_{Q_\rho^{(\theta)}(z_o)} |D\power um|^2 \dxdt \bigg]\\
&\qquad+
\frac{ c}{\epsilon^{\frac{2}{n}}} \bigg[\biint_{Q_\rho^{(\theta)}(z_o)} |D\power um|^{2q_o}\dxdt\bigg]^{\frac1{q_o}}
+
\frac{c}{\epsilon^{\frac{2}{n}}}\biint_{Q_\rho^{(\theta)}(z_o)} \big(G+|u|^{1+m}\big)\dxdt,
\end{align}
with a constant $c=c(n,m,\nu,L,B)>0$ and the exponent
$$q_o:=\max\Bigg\{\frac12, \frac{n(1+m)}{2(nm+1+m)}\Bigg\}<1.$$
\end{lemma}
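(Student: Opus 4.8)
The strategy is the standard decomposition of the space-time oscillation into a purely spatial part (on time slices) and a time part (the variation of slicewise means), controlled by the gluing lemma. First I would write
\[
\biint_{Q_\rho^{(\theta)}} \frac{\big| \power u{\frac{1+m}2} -\big(\power u{\frac{1+m}2}\big)_\rho^{(\theta)} \big|^2}{\rho^{\frac{1+m}m}} \dxdt
\le
c\biint_{Q_\rho^{(\theta)}} \frac{\big| \power u{\frac{1+m}2} - \power{\<u\>_{\hat\rho}(t)}{\frac{1+m}2}\big|^2}{\rho^{\frac{1+m}m}} \dxdt
+
c\,\sup_{t_1,t_2\in\Lambda_\rho}\frac{\big|\power{\<u\>_{\hat\rho}(t_1)}{\frac{1+m}2}-\power{\<u\>_{\hat\rho}(t_2)}{\frac{1+m}2}\big|^2}{\rho^{\frac{1+m}m}},
\]
using Lemma~\ref{lem:mvint} to replace the space-time mean by the slicewise mean at radius $\hat\rho\in[\rho/2,\rho]$ from Lemma~\ref{lem:time-diff} (the factor $|B_\rho|/|B_{\hat\rho}|$ is bounded). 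For the first integral, which is a purely spatial oscillation on each slice $B_{\hat\rho}^{(\theta)}(x_o)\times\{t\}$, I would apply the Sobolev--Poincar\'e inequality in $\R^n$ to the function $x\mapsto\power u{\frac{1+m}2}(x,t)$ — since $D\power u{\frac{1+m}2}$ is comparable to $|u|^{\frac{1-m}2}|D\power um|$ via \eqref{a-b-1}, which after H\"older in the exponent $q_o$ yields a product of $\big(\biint|D\power um|^{2q_o}\big)^{1/q_o}$ with a power of $\biint|u|^{1+m}$; the latter is absorbed by the sub-intrinsic bound \eqref{sub-intrinsic-1} turned into a bound on $\theta^{2m}$, and the $\rho$-powers match precisely because of the scaling encoded in $Q_\rho^{(\theta)}$. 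The borderline choice of $q_o=\max\{\tfrac12,\tfrac{n(1+m)}{2(nm+1+m)}\}$ is exactly what is needed for the Sobolev conjugate exponent to close, and this is where the restriction $m>m_c$ enters.

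For the time term I would feed the gluing estimate \eqref{gluing} into the bound, using $\big|\power a{\frac{1+m}2}-\power b{\frac{1+m}2}\big|^2\le c(|a|+|b|)^{1-m}|a-b|$ (an instance of \eqref{a-b-1}) to pass from the difference of means of $u$ to the difference of the $\frac{1+m}2$-powers of the means, and then bounding $|\<u\>_{\hat\rho}|$ by $\big(\biint|u|^{1+m}\big)^{1/(1+m)}$. Each term on the right of \eqref{gluing} then gets multiplied by $\rho^{-\frac{1+m}m}$ and by this power of the mean; the gradient/$F$ term produces $\rho^{\frac2m}\theta^{\frac{2m(1-m)}{1+m}}\big(\biint(|D\power um|+|F|)\big)^2$ times a power of $\biint|u|^{1+m}$, and the remaining terms produce powers of $\biint(|u|^{1+m}+G)$. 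Here the free parameter $\mu>0$ must be chosen — as in the proof of Lemma~\ref{lem:time-diff}'s intended use — to balance the $\mu^m$ and $\mu^{-m}$ contributions against the scaling deficit; I expect the natural choice to be $\mu^{m}\sim\rho^{-\frac{1+m}m}\big(\biint G\big)^{?}$ or a comparable quantity built from the right-hand side, so that all exponents of $\rho$ and of the averaged quantities line up.

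The decisive step — and the main obstacle — is then the \emph{absorption of powers of $\theta$} using the two alternative hypotheses. After the above manipulations the right-hand side will contain terms of the form $\theta^{a}\cdot(\text{averaged integrals})$ with various exponents $a>0$ coming from the anisotropic geometry; these are not yet of the form claimed in \eqref{desired-est}. In the case \eqref{intrinsic_1} I would use $\theta^{2m}\le B\biint\frac{|u|^{1+m}}{\rho^{(1+m)/m}}$ to trade each surplus power of $\theta$ for an average of $|u|^{1+m}$, and in the case \eqref{intrinsic_2} I would instead use $\theta^{2m}\le B\biint(|D\power um|^2+G+|u|^{1+m})$; in the latter case the resulting $\biint|D\power um|^2$-terms, carrying a small coefficient, are put into the $\varepsilon$-bracket on the right of \eqref{desired-est}, while the $\theta$-dependence in the gradient term is handled by the elementary interpolation $ab\le\varepsilon a^{p}+c(\varepsilon)b^{p'}$ with $p$ chosen so that one factor becomes $\biint|D\power um|^2$ (absorbed in the $\varepsilon$-term) and the other becomes $\big(\biint|D\power um|^{2q_o}\big)^{1/q_o}$ or $\biint(G+|u|^{1+m})$ (the stated right-hand side), the price being the factor $\varepsilon^{-2/n}$. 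Keeping careful track of all exponents of $\rho$, $\theta$ and $\varepsilon$ through this absorption — and checking that the two cases \eqref{intrinsic_1}, \eqref{intrinsic_2} both genuinely kill every residual power of $\theta$ — is the technical heart of the argument; the rest is bookkeeping with Lemmas~\ref{lem:a-b}, \ref{lem:mvint} and Young's inequality.
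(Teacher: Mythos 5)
Your overall architecture --- splitting the space--time oscillation into a slicewise spatial part plus a time part, treating the spatial part by slicewise Sobolev embedding with the exponent $q_o$ and the sub-intrinsic coupling, and feeding the gluing lemma into the time part with a judicious choice of $\mu$ --- coincides with the paper's. The gap lies exactly where you locate the ``technical heart'': the elimination of the residual $\theta$-powers in the case \eqref{intrinsic_1}, for which the mechanism you propose does not work. If you bound the time term via $\big|\power{a}{\frac{1+m}2}-\power{b}{\frac{1+m}2}\big|^2\le c|a-b|^{1+m}$ and then ``trade each surplus power of $\theta$'' using \eqref{intrinsic_1}, the term $\epsilon\theta^{2m}$ produced by Young's inequality becomes $\epsilon B\biint_{Q_\rho^{(\theta)}(z_o)}\tfrac{|u|^{1+m}}{\rho^{(1+m)/m}}\dxdt$, which is neither on the right-hand side of \eqref{desired-est} nor absorbable into the left-hand side: it is the full average (with the factor $\rho^{-(1+m)/m}$), not the oscillation. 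This is why the paper invokes \eqref{intrinsic_1} at the \emph{start} of the estimate of the time term, in the form $\theta^{2m}\le 2B\biint\rho^{-\frac{1+m}m}\big|\power{u}{\frac{1+m}2}-\power{[(u)_{\hat\rho}^{(\theta)}]}{\frac{1+m}2}\big|^2\dxdt+2B\rho^{-\frac{1+m}m}\big|(u)_{\hat\rho}^{(\theta)}\big|^{1+m}$, so that the oscillation part can be re-absorbed and the mean part, combined with \eqref{a-b-1} rather than \eqref{a-b-2}, keeps the time term genuinely \emph{quadratic} in $\<u\>_{\hat\rho}(t)-\<u\>_{\hat\rho}(\tau)$ and lets one square the gluing estimate instead of raising it to the power $1+m$.

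Moreover, after the choice $\mu^{2m}\sim\rho^{2}\theta^{-\frac{2m(1-m)}{1+m}}\biint G\,\dxdt$, the right-hand side contains \emph{negative} powers of $\theta$, e.g. $\theta^{-2(1-m)}\big[\biint G\dxdt\big]^{1/m}$ and $\theta^{-\frac{4m(1-m)}{1+m}}\big[\biint G\dxdt\big]^{\frac{3-m}{1+m}}$, and neither hypothesis supplies a lower bound on $\theta$. The paper's device --- absent from your plan --- is to multiply the entire inequality by $E^{\frac{1-m}{m}}$, where $E$ denotes the left-hand side, use $E\le c\theta^{2m}$ (a consequence of sub-intrinsicness) to cancel the negative $\theta$-powers, and then recombine via Young's inequality and re-absorption. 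Without this self-improvement step, or an equivalent one, the case \eqref{intrinsic_1} does not close. Two smaller inaccuracies: the $\epsilon\sup$-term and the $\epsilon^{-2/n}$ factor in \eqref{desired-est} originate from the parabolic interpolation of the \emph{spatial} term between the sup over time slices and the slicewise $L^2$-integrals (Young with exponents $\tfrac{n+2}{2}$, $\tfrac{n+2}{n}$), not from the $\theta$-absorption; and the restriction $m>\frac{(n-2)_+}{n+2}$ is needed a second time, to guarantee $\frac{1-m}{1+m}<\frac2n$ when the $\epsilon$-powers are collected in the case \eqref{intrinsic_2}.
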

\begin{proof}
Again, we omit the reference to the center point $z_o=(x_o,t_o)$
throughout the proof. For the proof we follow the line of argument
from \cite[Lemma 5.1]{BDS-singular-higher-int}.
Let us start the proof by introducing the radius $\hat{\rho} \in\big[
\frac\rho2 , \rho \big]$ that appeared in Lemma~\ref{lem:time-diff}.
Then we use Lemma~\ref{lem:mvint} to estimate
\begin{align}\label{Estimate}\nonumber
\biint_{Q_\rho^{(\theta)}}\frac{\big|\power u {\frac{1+m}2}- \big(\power u {\frac{1+m}2}\big)_\rho^{(\theta)}\big|^2}{\rho^{\frac{1+m}m}}\dxdt
&\le
c\,\biint_{Q_\rho^{(\theta)}}\frac{\big|\power u {\frac{1+m}2}- \power{\big[(u)_{\hat\rho}^{(\theta)}\big]} {\frac{1+m}2}\big|^2}{\rho^{\frac{1+m}m}}\dxdt\\
&\le
c({\rm{I+II}}),
\end{align}
where
\begin{align*}
{\rm{I}} :=
\biint_{Q_\rho^{(\theta)}}\frac{\big| \power u{\frac{1+m}2} - \power{\big[\<u\>_{\hat{\rho}}^{(\theta)}(t)\big]}{\frac{1+m}2} \big|^2 }{\rho^{\frac{1+m}m}} \dxdt
\end{align*}
and
\begin{align*}
{\rm{II}} :=
\bint_{\Lambda_\rho}\frac{\big|  \power{\big[\<u\>_{\hat{\rho}}^{(\theta)}(t)\big]}{\frac{1+m}2} - \power{\big[ (u)_{\hat{\rho}}^{(\theta)} \big]}{\frac{1+m}2} \big|^2 }{\rho^{\frac{1+m}m}} dt.
\end{align*}
To estimate {\rm{I}}, we first use Young's inequality and
Lemma~\ref{lem:mvint}, with the result
\begin{align}\label{est-I-a}
{\rm{I}} 
&\le
\sup_{t\in \Lambda_\rho}\bigg[\bint_{B_\rho^{(\theta)} }\frac{\big|\power u{\frac{1+m}2} - \power{\big[\<u\>_{\hat{\rho}}^{(\theta)}(t)\big]}{\frac{1+m}2}\big|^2}{\rho^\frac{1+m}m}  \dx\bigg]^\frac2{n+2} \nonumber\\
&\qquad\times
\mint_{\Lambda_{\rho}}\bigg[\mint_{B_\rho^{(\theta)}} \frac{\big|\power u {\frac{1+m}2} - \power{\big[\< u\>_{\hat{\rho}}^{(\theta)}(t)\big]}{\frac{1+m}{2}} \big|^2}{\rho^\frac{1+m}m} \dx\bigg]^\frac{n}{n+2} \dt \nonumber\\
&\le
\epsilon_1\sup_{t\in \Lambda_\rho}\bint_{B_\rho^{(\theta)} }\frac{\big|\power u{\frac{1+m}2} - \big(\power u{\frac{1+m}m}\big)_\rho^{(\theta)}\big|^2 }{\rho^{\frac{1+m}m}} \dx
+
\frac{c}{\epsilon_1^{\frac{2}{n}}\rho^\frac{1+m}m} {\rm{I}_1}^\frac{n+2}n
\end{align}
for any $\epsilon_1\in(0,1)$, where

\begin{equation}\label{def-I1}
{\rm{I}_1}
:=
\mint_{\Lambda_{\rho}}\bigg[\mint_{B_\rho^{(\theta)}} \big|\power
u{\frac{1+m}2} - \power{\big[\<\power
  um\>_\rho^{(\theta)}(t)\big]}{\frac{1+m}{2m}} \big|^2
\dx\bigg]^\frac{n}{n+2} \dt.
\end{equation}
For the estimate of $\rm{I}_1$ in the case of $m<1$, we deduce from
\eqref{a-b-1} with $\alpha=\frac{1+m}{2m}$ that
\begin{align*}
\big|\power u{\frac{1+m}2} - \power{\big[\<\power um\>_\rho^{(\theta)}(t)\big]}{\frac{1+m}{2m}} \big|
\le
c\big(\big|\power um\big|+\big|\<\power um\>_\rho^{(\theta)}(t)\big|\big)^\frac{1-m}{2m}
\big|\power um-\<\power um\>_\rho^{(\theta)}(t)\big|,
\end{align*}
holds true, which implies
\begin{align}
{\rm{I}_1}
&\le
c\,\bint_{\Lambda_\rho}\bigg[\bint_{B_\rho^{(\theta)}}\big(\big|\power um\big|+\big|\<\power um\>_\rho^{(\theta)}(t)\big|\big)^\frac{1-m}{m}\big|\power um-\<\power um\>_\rho^{(\theta)}(t)\big|^2
\dx\bigg]^\frac{n}{n+2}\dt \nonumber\\
&\le
c\,\bint_{\Lambda_\rho}\bigg[ \bint_{B_\rho^{(\theta)}}|u|^{1+m}\dx\bigg]^\frac{n(1-m)}{(n+2)(1+m)}
\bigg[ \bint_{B_\rho^{(\theta)}} \big|\power um-\<\power um\>_\rho^{(\theta)}(t)\big|^\frac{1+m}m \dx \bigg]^\frac{2nm}{(n+2)(1+m)} \dt \nonumber\\
&\le
c\bigg[\biint_{Q_\rho^{(\theta)}}|u|^{1+m}\dxdt\bigg]^\frac{n(1-m)}{(n+2)(1+m)}\nonumber\\
&\qquad\times
\Bigg[\bint_{\Lambda_\rho}\bigg[ \bint_{B_\rho^{(\theta)}} \big|\power um-\<\power um\>_\rho^{(\theta)}(t)\big|^\frac{1+m}m \dx \bigg]^\frac{nm}{nm+1+m} \dt\Bigg]^\frac{2(nm+1+m)}{(n+2)(1+m)}.\nonumber
\end{align}
Here, we used H\"older's inequality in space with exponents
$\frac{1+m}{1-m}$ and $\frac{1+m}{2m}$ and then in time with exponents
$\frac{(n+2)(1+m)}{n(1-m)}$ and $\frac{(n+2)(1+m)}{2(nm+1+m)}$. 
Using \eqref{sub-intrinsic-1} and Sobolev's inequality on the time slices, this leads to
\begin{align}
{\rm{I}_1}
&\le
c\big(\theta^{2m}\rho^\frac{1+m}m\big)^\frac{n(1-m)}{(n+2)(1+m)}\big(\theta^\frac{m(m-1)}{1+m}\rho\big)^\frac{2n}{n+2}
\bigg[\biint_{Q_\rho^{(\theta)}}|D\power um|^{2q_o}\dxdt\bigg]^\frac{n}{q_o(n+2)} \nonumber\\
&=
c\Bigg[\rho^\frac{1+m}{m}
\bigg[\biint_{Q_\rho^{(\theta)}}|D\power um|^{2q_o}\dxdt\bigg]^\frac{1}{q_o}\Bigg]^\frac{n}{n+2}	
\end{align}
for some $c=c(n,m,B)>0$, where
$$q_o:= \max\bigg\{ \frac12, \frac{n(1+m)}{2(nm+1+m)}\bigg\}.$$
In the case $m=1$, the same inequality follows more directly by
applying Sobolev's inequality slicewise to the integral
in~\eqref{def-I1}. 
Now we insert the last inequality into \eqref{est-I-a}, in order to get
\begin{align}\label{est-I-aa}
{\rm{I}}
&\le
\epsilon_1\sup_{t\in \Lambda_\rho}\bint_{B_\rho^{(\theta)} }\frac{\big|\power u{\frac{1+m}2} - \big(\power u{\frac{1+m}2}\big)_\rho^{(\theta)}\big|^2 }{\rho^{\frac{1+m}m}} \dx 
+
\frac{c}{\epsilon_1^\frac 2n}
\bigg[\biint_{Q_\rho^{(\theta)}}|D\power um|^{2q_o}\dxdt\bigg]^\frac{1}{q_o}.
\end{align}
For the estimate of {\rm{II}}, we first consider the case that
the super-intrinsic coupling \eqref{intrinsic_1} is satisfied and that $m<1$.
In this case, condition \eqref{intrinsic_1} implies
\begin{align}\label{intrinsic-1-theta2m}
\theta^{2m}
&\le
2B\biint_{Q_{\rho}^{(\theta)}}\frac{\big|\power u{\frac{1+m}2} - \power{\big[ (u)_{\hat{\rho}}^{(\theta)}\big]}{\frac{1+m}2}\big|^2}{\rho^{\frac{1+m}m}} \dxdt
+
\frac{2B\big|(u)_{\hat{\rho}}^{(\theta)} \big|^{1+m}}{\rho^{\frac{1+m}m}}.
\end{align}
Using \eqref{intrinsic-1-theta2m} and \eqref{a-b-2}, we infer 
\begin{align}\label{est-II-a}
{\rm{II}}
\le
\frac{c\theta^{\frac{2m(1-m)}{1+m}}}{\theta^{\frac{2m(1-m)}{1+m}}} \bint_{\Lambda_\rho}\frac{\big|  \power{\big[\<u\>_{\hat{\rho}}^{(\theta)}(t)\big]}{\frac{1+m}2} - \power{\big[ (u)_{\hat{\rho}}^{(\theta)} \big]}{\frac{1+m}2} \big|^2 }{\rho^{\frac{1+m}m} } \dt 
\le
c({\rm{II}_1} + {\rm{II}_2}),
\end{align}
where
\begin{align*}
{\rm{II}_1}
&:=
\frac1{\big(\theta^{2m}\big)^\frac{(1-m)^2}{2(1+m)}}
\left[\biint_{Q_{\rho}^{(\theta)}}\frac{\big|\power u{\frac{1+m}2} - \power{\big[ (u)_{\hat{\rho}}^{(\theta)}\big]}{\frac{1+m}2}\big|^2}{\rho^{\frac{1+m}m}} \dxdt\right]^{\frac{1-m}{1+m}}\\
&\qquad\times
\bint_{\Lambda_\rho}\frac{\big| \<u\>_{\hat{\rho}}^{(\theta)}(t) - (u)_{\hat{\rho}}^{(\theta)} \big|^{1+m}}{\rho^{\frac{1+m}m}\theta^{m(1-m)} } \dt
\end{align*}
and 
\begin{align*}
{\rm{II}_2}
:= 
\bint_{\Lambda_\rho}\frac{\big|(u)_{\hat{\rho}}^{(\theta)} \big|^{1-m}
  \big|
  \power{\big[\<u\>_{\hat{\rho}}^{(\theta)}(t)\big]}{\frac{1+m}2} -
  \power{\big[ (u)_{\hat{\rho}}^{(\theta)} \big]}{\frac{1+m}2} \big|^2
  }{\rho^{\frac{2}m} \theta^{\frac{2m(1-m)}{1+m}}} \dt .
\end{align*}
Noting that $\frac{1-m}{1+m}-\frac{(1-m)^2}{2(1+m)} = \frac{1-m}2$, we
use the sub-intrinsic coupling \eqref{sub-intrinsic-1} and H\"older's inequality to estimate
${\rm{II}_1}$ as follows.
\begin{align*}
{\rm{II}_1}
\le
\bigg[ \biint_{Q_{\rho}^{(\theta)}}\frac{\big|\power u{\frac{1+m}2} - \power{\big[ (u)_{\hat{\rho}}^{(\theta)}\big]}{\frac{1+m}2}\big|^2}{\rho^{\frac{1+m}m}} \dxdt \bigg]^{\frac{1-m}2}
\bigg[\bint_{\Lambda_\rho}\frac{\big| \<u\>_{\hat{\rho}}^{(\theta)}(t) - (u)_{\hat{\rho}}^{(\theta)} \big|^{2}}{\rho^{\frac{2}m} \theta^{\frac{2m(1-m)}{1+m}}} \dt\bigg]^{\frac{1+m}2}. 
\end{align*}
According to Young's inequality and Lemma~\ref{lem:mvint}, this
implies for any $\epsilon_o>0$ that 
\begin{align}\label{est-II1-a}\nonumber
{\rm{II}_1}
&\le
\epsilon_o \biint_{Q_{\rho}^{(\theta)}}\frac{\big|\power u{\frac{1+m}2} - \power{\big[ (u)_{\hat{\rho}}^{(\theta)}\big]}{\frac{1+m}2}\big|^2}{\rho^{\frac{1+m}m}} \dxdt 
+
\frac{c}{\epsilon_o^{\frac{1-m}{1+m}}}\bint_{\Lambda_\rho}\frac{\big| \<u\>_{\hat{\rho}}^{(\theta)}(t) - (u)_{\hat{\rho}}^{(\theta)} \big|^{2}}{\rho^{\frac{2}m} \theta^{\frac{2m(1-m)}{1+m}}} \dt \\\nonumber
&\le
c\epsilon_o \biint_{Q_{\rho}^{(\theta)}}\frac{\big|\power u{\frac{1+m}2} - \big(\power{u}{\frac{1+m}2}\big)_{\rho}^{(\theta)}\big|^2}{\rho^{\frac{1+m}m}} \dxdt \\
&\qquad+
\frac{c}{\epsilon_o^{\frac{1-m}{1+m}}}\bint_{\Lambda_\rho}\bint_{\Lambda_\rho}\frac{\big| \<u\>_{\hat{\rho}}^{(\theta)}(t) - \<u\>_{\hat{\rho}}^{(\theta)}(\tau) \big|^{2}}{\rho^{\frac{2}m} \theta^{\frac{2m(1-m)}{1+m}}} \dt \d\tau.
\end{align}
For the estimate of ${\rm{II}_2}$ we apply Lemma~\ref{lem:a-b}, which
yields 
\begin{align}
{\rm{II}_2}\label{est-II2-a}\nonumber
&\le
c\,\bint_{\Lambda_\rho}\frac{  \big| \<u\>_{\hat{\rho}}^{(\theta)}(t) - (u)_{\hat{\rho}}^{(\theta)} \big|^{2}}{\rho^{\frac{2}m} \theta^{\frac{2m(1-m)}{1+m}}} \dt\\
&\le
c\,\bint_{\Lambda_\rho}\bint_{\Lambda_\rho}\frac{\big| \<u\>_{\hat{\rho}}^{(\theta)}(t) - \<u\>_{\hat{\rho}}^{(\theta)}(\tau) \big|^{2}}{\rho^{\frac{2}m} \theta^{\frac{2m(1-m)}{1+m}}} \dt \d\tau.
\end{align}
Combining \eqref{est-II-a}-\eqref{est-II2-a} and selecting
$\epsilon_o=\epsilon_o(n,m,\nu,L,B)>0$ so small that $c\epsilon_o\le\frac12$, we infer
\begin{align}\label{est-II1}
\nonumber
{\rm{II}}
&\le
\frac12 \biint_{Q_{\rho}^{(\theta)}}\frac{\big|\power u{\frac{1+m}2} - \big( \power u{\frac{1+m}2}\big)_{\rho}^{(\theta)}\big|^2}{\rho^{\frac{1+m}m}} \dxdt \\
&\qquad+
c\,\bint_{\Lambda_\rho}\bint_{\Lambda_\rho}\frac{\big| \<u\>_{\hat{\rho}}^{(\theta)}(t) - \<u\>_{\hat{\rho}}^{(\theta)}(\tau) \big|^{2}}{\rho^{\frac{2}m} \theta^{\frac{2m(1-m)}{1+m}}} \dt \d\tau.
\end{align}
In order to estimate the last integrand, we first apply Lemma~\ref{lem:time-diff} and make use of
the sub-intrinsic coupling \eqref{sub-intrinsic-1} to estimate the
right-hand side further. This leads us to the estimate   
\begin{align*}
\nonumber
&\frac{\big|\<u\>_{\hat{\rho}}^{(\theta)}(t)-\<u\>_{\hat{\rho}}^{(\theta)}(\tau)\big|^2}{\rho^{\frac2m}\theta^{\frac{2m(1-m)}{1+m}}}\\\nonumber
&\quad\le
c\bigg[ \biint_{Q_\rho^{(\theta)}} |Du^m|^{2q_o}\dxdt\bigg]^{\frac1{q_o}}
+ 
c\biint_{Q_\rho^{(\theta)}} |F|^2\dxdt\\\nonumber
&\quad\quad+
\frac{c\mu^2}{\rho^{\frac2m}\theta^{\frac{2m(1-m)}{1+m}}}
+
\frac{c\mu^{2m}\theta^{\frac{2m(1-m)}{1+m}}}{\rho^2} \\\nonumber
&\quad\quad+
c\rho^{\frac{1+m}m}
\biint_{Q_\rho^{(\theta)}}\big(|u|^{1+m}+ |\partial_t \power \psi m|^{\frac{1+m}{m}}\big) \dxdt \\
&\quad\quad
+
  \frac{c\rho^{2}}{\mu^{2m}\theta^{\frac{2m(1-m)}{1+m}}}
\bigg[  \biint_{Q_\rho^{(\theta)}}G\dx\dt\bigg]^2
  +
\frac{c\mu^{2m}}{\rho^2\theta^{\frac{2m(1-m)}{1+m}}}\bigg[\biint_{Q_\rho^{(\theta)}}G
\dxdt\bigg]^{\frac{2-2m}{1+m}}
\end{align*}
for any $t,\tau\in\Lambda_\rho$, with a constant $c=c(n,m,\nu,L,B)$.
We use this estimate in \eqref{est-II1}, 
with the result
\begin{align}\label{est-II2}
\nonumber
{\rm{II}}
&\le
\frac12 \biint_{Q_{\rho}^{(\theta)}}\frac{\big|\power u{\frac{1+m}2} - \big( \power u{\frac{1+m}2}\big)_{\rho}^{(\theta)}\big|^2}{\rho^{\frac{1+m}m}} \dxdt \\\nonumber
&\qquad+
c\bigg[ \biint_{Q_\rho^{(\theta)}} |Du^m|^{2q_o}\dxdt\bigg]^{\frac1{q_o}}
+ 
c\biint_{Q_\rho^{(\theta)}} |F|^2\dxdt\\\nonumber
&\quad\quad+
\frac{c\mu^2}{\rho^{\frac2m}\theta^{\frac{2m(1-m)}{1+m}}}
+
\frac{c\mu^{2m}\theta^{\frac{2m(1-m)}{1+m}}}{\rho^2} \\\nonumber
&\quad\quad+
c\rho^{\frac{1+m}m}
\biint_{Q_\rho^{(\theta)}}\big(|u|^{1+m}+ |\partial_t \power \psi m|^{\frac{1+m}{m}}\big) \dxdt \\
&\quad\quad
+
  \frac{c\rho^{2}}{\mu^{2m}\theta^{\frac{2m(1-m)}{1+m}}}
\bigg[  \biint_{Q_\rho^{(\theta)}}G\dx\dt\bigg]^2
  +
\frac{c\mu^{2m}}{\rho^2\theta^{\frac{2m(1-m)}{1+m}}}\bigg[\biint_{Q_\rho^{(\theta)}}G
\dxdt\bigg]^{\frac{2-2m}{1+m}}.
\end{align}
With the choice 
\begin{align*}
\mu^{2m}:=\frac{\rho^2}{\theta^{\frac{2m(1-m)}{1+m}} }\biint_{Q_\rho^{(\theta)}} \big(G+ \delta\big)\dxdt>0
\end{align*}
for some $\delta>0$, the previous inequality yields the bound
\begin{align}\label{est-II-aa}
\nonumber
{\rm{II}}
&\le
\frac12 \biint_{Q_{\rho}^{(\theta)}}\frac{\big|\power u{\frac{1+m}2} - \big( \power u{\frac{1+m}2}\big)_{\rho}^{(\theta)}\big|^2}{\rho^{\frac{1+m}m}} \dxdt \\\nonumber
&\qquad+
c\bigg[ \biint_{Q_\rho^{(\theta)}} |Du^m|^{2q_o}\dxdt\bigg]^{\frac1{q_o}}\\\nonumber
&\qquad
+
\frac c{\theta^{2(1-m)}}\bigg[ \biint_{Q_\rho^{(\theta)}} \big(G+ \delta\big) \dxdt\bigg]^{\frac1m}
+
c\,\biint_{Q_\rho^{(\theta)}}\big(G+\delta)\dxdt
\\\nonumber
&\qquad+
c\rho^{\frac{1+m}m}
\biint_{Q_\rho^{(\theta)}}|u|^{1+m}+ |\partial_t \power \psi m|^{\frac{1+m}{m}} \dxdt \\
&\qquad+
  \frac{c}{\theta^{\frac{4m(1-m)}{1+m}}}\bigg[
\biint_{Q_\rho^{(\theta)}}\big(G+\delta\big)
\dxdt\bigg]^{\frac{3-m}{1+m}}.
\end{align}
We now plug \eqref{est-I-aa} and \eqref{est-II-aa} into \eqref{Estimate}, and then let $\delta\downarrow0$ to deduce
\begin{align}\label{pre-poincare-est}
\nonumber
&\biint_{Q_{\rho}^{(\theta)}}\frac{\big|\power u{\frac{1+m}2} - \big( \power u{\frac{1+m}2}\big)_{\rho}^{(\theta)}\big|^2}{\rho^{\frac{1+m}m}} \dxdt \\\nonumber
&\qquad\le
\frac12 \biint_{Q_{\rho}^{(\theta)}}\frac{\big|\power u{\frac{1+m}2} -
  \big( \power
  u{\frac{1+m}2}\big)_{\rho}^{(\theta)}\big|^2}{\rho^{\frac{1+m}m}}
\dxdt\\\nonumber
&\qquad\qquad+
\epsilon_1\sup_{t_o\in\Lambda_\rho}\bint_{B_{\rho}^{(\theta)}}\frac{\big|\power u{\frac{1+m}2} - \big( \power u{\frac{1+m}2}\big)_{\rho}^{(\theta)}\big|^2}{\rho^{\frac{1+m}m}} \dx
 \\\nonumber
&\qquad\qquad+
\frac{c}{\epsilon_1^{\frac2n}}\bigg[ \biint_{Q_\rho^{(\theta)}}
|Du^m|^{2q_o}\dxdt\bigg]^{\frac1{q_o}}\\\nonumber
&\qquad\qquad+
\frac c{\theta^{2(1-m)}}\bigg[ \biint_{Q_\rho^{(\theta)}} G \dxdt\bigg]^{\frac1m}
+
c\,\biint_{Q_\rho^{(\theta)}}G\dxdt
\\\nonumber
&\qquad\qquad+
c\rho^{\frac{1+m}m}
\biint_{Q_\rho^{(\theta)}}\big(|u|^{1+m}+ |\partial_t \power \psi m|^{\frac{1+m}{m}}\big) \dxdt \\
&\qquad\qquad+
  \frac{c}{\theta^{\frac{4m(1-m)}{1+m}}}\bigg[
\biint_{Q_\rho^{(\theta)}}G
\dxdt\bigg]^{\frac{3-m}{1+m}}.
\end{align}
We then re-absorb the first term on the right-hand side into the left-hand side.
Next, we observe that the sub-intrinsic coupling
\eqref{sub-intrinsic-1} implies 
\begin{align*}
E:=\biint_{Q_{\rho}^{(\theta)}}\frac{\big|\power u{\frac{1+m}2} - \big( \power u{\frac{1+m}2}\big)_{\rho}^{(\theta)}\big|^2}{\rho^{\frac{1+m}m}} \dxdt
\le
c\theta^{2m}.
\end{align*}
We multiply inequality~\eqref{pre-poincare-est} by $E^{\frac{1-m}m}$
and apply Young's inequality. 
In this way, we obtain for any $\epsilon_o\in(0,1)$
\begin{align}
\nonumber
&\bigg[ \biint_{Q_{\rho}^{(\theta)}}\frac{\big|\power u{\frac{1+m}2} - \big( \power u{\frac{1+m}2}\big)_{\rho}^{(\theta)}\big|^2}{\rho^{\frac{1+m}m}} \dxdt \bigg]^{\frac1m} \\\nonumber
&\qquad\le
\frac{c\epsilon_1}{\epsilon_o^{\frac{1-m}m}}\bigg[ \sup_{t\in\Lambda_\rho}\bint_{B_{\rho}^{(\theta)}}\frac{\big|\power u{\frac{1+m}2} - \big( \power u{\frac{1+m}2}\big)_{\rho}^{(\theta)}\big|^2}{\rho^{\frac{1+m}m}} \dx\bigg]^{\frac1m}\\\nonumber
&\qquad\qquad+
5
\epsilon_o\bigg[  \biint_{Q_{\rho}^{(\theta)}}\frac{\big|\power u{\frac{1+m}2} - \big( \power u{\frac{1+m}2}\big)_{\rho}^{(\theta)}\big|^2}{\rho^{\frac{1+m}m}} \dxdt \bigg]^{\frac1m}\\\nonumber
&\qquad\qquad+
\frac{c}{\epsilon_1^{\frac2n}\epsilon_o^{\frac{1-m}m}}\bigg[ \biint_{Q_\rho^{(\theta)}} |Du^m|^{2q_o}\dxdt\bigg]^{\frac1{q_om}}\\
&\qquad\qquad+
\frac{c}{\epsilon_o^{\frac{1-m}m}}\bigg[ \biint_{Q_\rho^{(\theta)}}\big(|u|^{1+m}+ G\big) \dxdt\bigg]^{\frac1m}.
\end{align}
Here, we also used the fact $\rho\le1$.
Choosing $\epsilon_o=\frac1{10}$, we can re-absorb the second term on
the right-hand side into the left-hand side. In this way, we obtain 
\begin{align}
\nonumber
&\biint_{Q_{\rho}^{(\theta)}}\frac{\big|\power u{\frac{1+m}2} - \big( \power u{\frac{1+m}2}\big)_{\rho}^{(\theta)}\big|^2}{\rho^{\frac{1+m}m}} \dxdt \\\nonumber
&\qquad\le
c\epsilon_1^m \sup_{t_o\in\Lambda_\rho}\bint_{B_{\rho}^{(\theta)}}\frac{\big|\power u{\frac{1+m}2} - \big( \power u{\frac{1+m}2}\big)_{\rho}^{(\theta)}\big|^2}{\rho^{\frac{1+m}m}} \dx\\
&\qquad\qquad+
\frac{c}{\epsilon_1^{\frac{2m}n}}\bigg[ \biint_{Q_\rho^{(\theta)}} |Du^m|^{2q_o}\dxdt\bigg]^{\frac1{q_o}}
+
 c\,\biint_{Q_\rho^{(\theta)}} \big( |u|^{1+m} +G \big) \dxdt,
\end{align}
which implies the desired estimate if we choose $\epsilon_1$ such that $c\epsilon_1^m=\epsilon$.

It remains to consider the case that
either $m=1$ or \eqref{intrinsic_2} is satisfied.
In this situation
we use in turn \eqref{a-b-2}, Lemma~\ref{lem:time-diff}, and
the sub-intrinsic coupling \eqref{sub-intrinsic-1} for the estimate of ${\rm{II}}$ from
\eqref{Estimate}, with the result 
\begin{align}
\nonumber
{\rm{II}}
&\le
c\,  \bint_{\Lambda_\rho}\frac{\big| \<u\>_{\hat{\rho}}^{(\theta)}(t) - (u)_{\hat{\rho}}^{(\theta)} \big|^{1+m}}{\rho^{\frac{1+m}m} } \dt\\\nonumber
&\le
c\,  \bint_{\Lambda_\rho}\bint_{\Lambda_\rho}\frac{\big| \<u\>_{\hat{\rho}}^{(\theta)}(t) - \<u\>_{\hat{\rho}}^{(\theta)}(\tau) \big|^{1+m}}{\rho^{\frac{1+m}m} } \dt\d\tau\\\nonumber
&\le
c\theta^{m(1-m)}\bigg[ \biint_{Q_\rho^{(\theta)}} \big(|D\power um|+|F|\big) \dxdt\bigg]^{1+m} \\\nonumber
&\qquad+
\frac{c\mu^{1+m}}{\rho^{\frac{1+m}m}}
+
\frac{c\mu^{m(1+m)}\theta^{2m(1-m)}}{\rho^{1+m}} \\\nonumber
&\qquad+
c\theta^{m(1-m)}
\bigg[\rho^\frac{1+m}m \biint_{Q_\rho^{(\theta)}}\big(|u|^{1+m}+
|\partial_t \power \psi m|^{\frac{1+m}{m}}
\big)\dxdt\bigg]^{\frac{1+m}2} \\
&\qquad+ 
\frac{c\mu^{m(1+m)}}{\rho^{1+m}}
\bigg[\biint_{Q_\rho^{(\theta)}} G\dxdt\bigg]^{1-m}
+
  \frac{c\rho^{1+m}}{\mu^{m(1+m)}}
  \bigg[ \biint_{Q_\rho^{(\theta)}}G\dxdt \bigg]^{1+m}.
\end{align}
Next, we use Young's and H\"older's inequalities to
estimate the right-hand side further and obtain 
\begin{align}\label{ii2}
\nonumber
{\rm{II}}
&\le
c\epsilon_o(1-m) \theta^{2m}
+
 \frac{c}{\epsilon_o^{\frac{1-m}{1+m}}} \bigg[\biint_{Q_\rho^{(\theta)}} |D\power um|^{2q_o}\dxdt\bigg]^{\frac1{q_o}}
+
\frac{c\mu^{1+m}}{\epsilon_o^{\frac{1-m}m}\rho^{\frac{1+m}m}}  \\
&\qquad+
\frac{c}{\epsilon_o^{\frac{1-m}{1+m}}}\biint_{Q_\rho^{(\theta)}}
\big(G + |u|^{1+m}\big) \dxdt 
+
 \frac{c\rho^{1+m}}{\mu^{m(1+m)}}
  \bigg[ \biint_{Q_\rho^{(\theta)}}
  G\dx\dt \bigg]^{1+m}
\end{align}
for any $\epsilon_o\in(0,1)$, in both of the cases $m<1$ and $m=1$. Here, we used the fact $\rho\le1$ to
simplify the right-hand side.
We point out that the parameter $\theta$ does only appear in the case
$m<1$. This is the reason why we only need to assume the upper bound
\eqref{intrinsic_2} for $\theta^{2m}$ in the latter case.
Using \eqref{intrinsic_2} in the case $m<1$, we deduce
from~\eqref{ii2} that 
\begin{align}\label{ii3}
\nonumber
{\rm{II}}
&\le
c\epsilon_o \biint_{Q_\rho^{(\theta)}} |D\power um|^2 \dxdt
+
 \frac{c}{\epsilon_o^{\frac{1-m}{1+m}}} \bigg[\biint_{Q_\rho^{(\theta)}} |D\power um|^{2q_o}\dxdt\bigg]^{\frac1{q_o}}
+
\frac{c\mu^{1+m}}{\epsilon_o^{\frac{1-m}m}\rho^{\frac{1+m}m}}  \\
&\qquad+
\frac{c}{\epsilon_o^{\frac{1-m}{1+m}}}\biint_{Q_\rho^{(\theta)}}
\big(G + |u|^{1+m}\big) \dxdt 
+
 \frac{c\rho^{1+m}}{\mu^{m(1+m)}}
  \bigg[ \biint_{Q_\rho^{(\theta)}}
  G\dx\dt \bigg]^{1+m}
\end{align}
for any $\epsilon_o\in(0,1)$.
This estimate also holds in the case $m=1$,
even if \eqref{intrinsic_2} is not valid. 
In the preceding estimate, we choose the parameter 
\begin{align*}
  \mu^{1+m} :=\eps_o^{\frac{1-m}{m(1+m)}}
\rho^{\frac{1+m}m}\biint_{Q_\rho^{(\theta)}} \big(G+\delta\big) \dxdt >0
\end{align*}
for any $\delta>0$, and then let $\delta\downarrow0$. In this way, we deduce 
\begin{align}
\nonumber
{\rm{II}}
&\le
c\epsilon_o \biint_{Q_\rho^{(\theta)}} |D\power um|^2 \dxdt
+
\frac{ c}{\epsilon_o^{\frac{1-m}{1+m}}} \bigg[\biint_{Q_\rho^{(\theta)}} |D\power um|^{2q_o}\dxdt\bigg]^{\frac1{q_o}}\\\nonumber
&\qquad+
\frac{ c}{\epsilon_o^{\frac{1-m}{1+m}}}\biint_{Q_\rho^{(\theta)}} \big(G+|u|^{1+m}\big)\dxdt.
\end{align}
We finally combine this with the estimate \eqref{est-I-aa} for
the term $\mathrm{I}$. In view of \eqref{Estimate}, we arrive at the bound
\begin{align}
\nonumber
&\biint_{Q_\rho^{(\theta)}}\frac{\big|\power u {\frac{1+m}2}- \big(\power u {\frac{1+m}2}\big)_\rho^{(\theta)}\big|^2}{\rho^{\frac{1+m}m}}\dxdt \\\nonumber
&\qquad\le
c\epsilon_1\sup_{t\in \Lambda_\rho}\bint_{B_\rho^{(\theta)} }\frac{\big|\power u{\frac{1+m}2} - \big(\power u{\frac{1+m}2}\big)_\rho^{(\theta)}\big|^2 }{\rho^{\frac{1+m}m}} \dx 
+
c\epsilon_o \biint_{Q_\rho^{(\theta)}} |D\power um|^2 \dxdt\\\nonumber
&\qquad\qquad+
c\bigg(\frac{ 1}{\epsilon_o^{\frac{2}{n}}}+\frac{ 1}{\epsilon_1^{\frac{2}{n}}}\bigg) \bigg[\biint_{Q_\rho^{(\theta)}} |D\power um|^{2q_o}\dxdt\bigg]^{\frac1{q_o}}
+
\frac{ c}{\epsilon_o^{\frac{2}{n}}}\biint_{Q_\rho^{(\theta)}} \big(G+|u|^{1+m}\big)\dxdt
\end{align}
for some $c=c(n,m,L,\nu,B)>0$ and any $\eps_o,\eps_1\in(0,1)$. Here we
used the fact $\frac{1-m}{1+m}<\frac2n$, which is a consequence of
our assumption $m>\frac{(n-2)_+}{n+2}$.
Selecting $\epsilon_o=\epsilon_1=\frac{\epsilon}{c}$, we also obtain
the asserted estimate \eqref{desired-est} when either $m=1$ or \eqref{intrinsic_2} is satisfied.
Therefore, the proof is complete.
\end{proof}

\section{A reverse H\"older inequality}

In this section, we are going to derive a reverse H\"older inequality
on proper cylinders.
More precisely, for $0<\rho\le1$ and $\theta>0$, we consider cylinders
$Q_{\rho}^{(\theta)}(z_o)$ that satisfy
$Q_{2\rho}^{(\theta)}(z_o)\subset\Omega_T$ and a sub-intrinsic
coupling of the type 
\begin{align}\label{Q-intrinsic}
\biint_{Q_{2\rho}^{(\theta)}(z_o)} \frac{|u|^{1+m}}{(2\rho)^{\frac{1+m}{m}}} \dxdt
\le
B\theta^{2m}
\end{align}
for some constant $B\ge1$.
In addition, we suppose that either
\begin{align}\label{Q-intrinsic-1}
\theta^{2m} \le B\biint_{Q_{\rho}^{(\theta)}(z_o)} \frac{|u|^{1+m}}{\rho^{\frac{1+m}{m}}} \dxdt
\end{align}
or
\begin{align}\label{Q-intrinsic-2}
\theta^{2m} \le B\biint_{Q_{\rho}^{(\theta)}(z_o)} \big[ |D\power um|^2+G+|u|^{1+m}\big] \dxdt
\end{align}
holds true, where
$$G=|F|^2+|g|^{1+m} +|D\power\psi m|^2+|\partial_t\power\psi m|^{\frac{1+m}m}.$$

In this situation, we have the following reverse H\"older inequality.
\begin{lemma}\label{lem:reverseHolder}
Let $\frac{(n-2)_+}{n+2}<m\le1$ and let $u\in K_\psi$ be a local weak
solution to the variational inequality \eqref{vari-ineq}, under
assumptions \eqref{obs-func}, \eqref{inhom-func}, and \eqref{A-struc}. 
Suppose that $Q_{\rho}^{(\theta)}(z_o)$ is a cylinder with
$0<\rho\le1$, $\theta>0$, and
$Q_{2\rho}^{(\theta)}(z_o)\subset\Omega_T$, for which
\eqref{Q-intrinsic} and either \eqref{Q-intrinsic-1} or
\eqref{Q-intrinsic-2} are satisfied, for some $B\ge1$.
Then we have the reverse H\"older inequality
\begin{align}
\nonumber
\biint_{Q_\rho^{(\theta)}(z_o)}|D\power um|^2\dxdt
&\le
c\bigg[\biint_{Q_{2\rho}^{(\theta)}(z_o)} |D\power um|^{2{q_o}} \dxdt\bigg]^\frac1{q_o}\\\nonumber
&\qquad+
c\biint_{Q_{2\rho}^{(\theta)}(z_o)}\big(G+|u|^{1+m}\big)\dxdt,
\end{align}
where $q_o$ is as in Lemma~\ref{lem:sobolev-poin}.
Here, the dependencies of the constant are given by $c=c(n,m,\nu,L,B)$.
\end{lemma}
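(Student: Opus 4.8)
The strategy is the standard one for reverse H\"older inequalities of this type: we play the energy estimate of Lemma~\ref{lem:energy} off against the Sobolev-Poincar\'e inequality of Lemma~\ref{lem:sobolev-poin} and reabsorb the resulting supremum- and gradient-terms by the iteration Lemma~\ref{lem:tech}. Throughout we assume $z_o=0$ and suppress it. We fix $a\in\R$ by $\power a{\frac{1+m}2}=\big(\power u{\frac{1+m}2}\big)_{Q_{2\rho}^{(\theta)}}$, so that Jensen's inequality yields $|a|^{1+m}\le\biint_{Q_{2\rho}^{(\theta)}}|u|^{1+m}\dxdt$. A preliminary observation, used repeatedly, is that since $Q_\rho^{(\theta)}\subset Q_s^{(\theta)}\subset Q_{2\rho}^{(\theta)}$ for every $s\in[\rho,2\rho]$ and since $\rho\le1$, the hypotheses \eqref{Q-intrinsic} and either \eqref{Q-intrinsic-1} or \eqref{Q-intrinsic-2} force the couplings \eqref{sub-intrinsic-1} and either \eqref{intrinsic_1} or \eqref{intrinsic_2} of Lemma~\ref{lem:sobolev-poin} to hold on \emph{every} cylinder $Q_s^{(\theta)}$ with $\rho\le s\le2\rho$, at the price of replacing $B$ by $c(n,m)\,B$; hence Lemma~\ref{lem:sobolev-poin} is available on all these cylinders.

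For $\rho\le r<s\le2\rho$ we apply Lemma~\ref{lem:energy} on $Q_s^{(\theta)}$ with inner radius $r$ and the chosen $a$, and pass to absolute integrals over the fixed cylinder $Q_{2\rho}^{(\theta)}$; all mean values occurring there are comparable up to a constant $c(n,m)$ because the radii lie in $[\rho,2\rho]$. Setting
\begin{equation*}
	\phi(r):=\sup_{t\in\Lambda_r^{(\theta)}}\int_{B_r^{(\theta)}}\frac{\big|\power u{\frac{1+m}2}(\cdot,t)-\power a{\frac{1+m}2}\big|^2}{\rho^{\frac{1+m}m}}\dx+\int_{Q_r^{(\theta)}}\big|D\power um\big|^2\dxdt,
\end{equation*}
which is non-decreasing on $[\rho,2\rho]$ and finite since $u\in C^0\big([0,T];L^{1+m}(\Omega)\big)$ and $\power um\in L^2\big(0,T;W^{1,2}(\Omega)\big)$, and using $s^{\frac{1+m}m}-r^{\frac{1+m}m}\ge c(m)\,\rho^{\frac1m}(s-r)$, Lemma~\ref{lem:energy} delivers an estimate of the form
\begin{equation*}
	\phi(r)\le \frac{c}{s-r}\,\mathcal P+\frac{c}{(s-r)^2}\,\mathcal O+c\,\mathcal D,
\end{equation*}
where $\mathcal D:=\int_{Q_{2\rho}^{(\theta)}}\big(G+|u|^{1+m}\big)\dxdt$ is already an admissible datum, $\mathcal P$ is the $\rho^{-\frac{1+m}m}$-weighted $L^2$-integral of $\power u{\frac{1+m}2}-\power a{\frac{1+m}2}$ over $Q_{2\rho}^{(\theta)}$, and $\mathcal O$ is the $\theta^{\frac{2m(1-m)}{1+m}}$-weighted $L^2$-integral of $\power um-\power am$ over $Q_{2\rho}^{(\theta)}$ (here we used $|a|^{1+m}\le\biint_{Q_{2\rho}^{(\theta)}}|u|^{1+m}\dxdt$).

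It remains to control $\mathcal P$ and $\mathcal O$. For $\mathcal P$ we replace $\power a{\frac{1+m}2}$ by the cylinder mean of $\power u{\frac{1+m}2}$ over $Q_{2\rho}^{(\theta)}$ by Lemma~\ref{lem:mvint} and apply the Sobolev-Poincar\'e inequality \eqref{desired-est} on $Q_{2\rho}^{(\theta)}$; observing that, after conversion to absolute integrals, both the supremum term and the gradient term on the right of \eqref{desired-est} are dominated by $\phi(2\rho)$, this turns $\mathcal P$ into $c\varepsilon\,\phi(2\rho)$ plus $c\varepsilon^{-2/n}$ times a multiple of $|Q_{2\rho}^{(\theta)}|\big(\big[\biint_{Q_{2\rho}^{(\theta)}}|D\power um|^{2q_o}\dxdt\big]^{1/q_o}+\biint_{Q_{2\rho}^{(\theta)}}(G+|u|^{1+m})\dxdt\big)$, for any $\varepsilon\in(0,1)$. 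For $\mathcal O$ we use \eqref{a-b-2} (which is trivial when $m=1$) and then \eqref{a-b-1} with exponent $\tfrac{1+m}2$ to get $\big|\power um-\power am\big|^2\le c\big(|u|+|a|\big)^{m(1-m)}\big|\power u{\frac{1+m}2}-\power a{\frac{1+m}2}\big|^{2m}$, next H\"older's inequality with exponents $\tfrac1m$ and $\tfrac1{1-m}$, and finally the sub-intrinsic coupling \eqref{Q-intrinsic} together with $|a|^{1+m}\le\biint_{Q_{2\rho}^{(\theta)}}|u|^{1+m}\dxdt$ to estimate the factor coming from $(|u|+|a|)^m$; Young's inequality then lets us absorb one part of $\mathcal O$ into $\mathcal P$ and leaves a factor $\theta^{2m}$, which by the alternative \eqref{Q-intrinsic-1}/\eqref{Q-intrinsic-2} is bounded by $c\varepsilon\,\phi(2\rho)$ plus a constant times a term of the same structure as $\mathcal D$ and $\big[\biint_{Q_{2\rho}^{(\theta)}}|D\power um|^{2q_o}\dxdt\big]^{1/q_o}$. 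Inserting all of this, using $\phi(s)\le\phi(2\rho)$ and $\rho\le1$ to push the $(s-r)$-singular part of the $\phi(2\rho)$-coefficient into the good terms, and fixing $\varepsilon$ small, we obtain an inequality
\begin{equation*}
	\phi(r)\le\vartheta\,\phi(s)+A\,(s^{\alpha}-r^{\alpha})^{-\beta}+C\qquad\text{for all }\ \rho\le r<s\le2\rho,
\end{equation*}
with a fixed $\vartheta\in(0,1)$, suitable exponents $\alpha,\beta>0$ depending on $m$, and constants $A,C$ that, after division by $|Q_\rho^{(\theta)}|$, have exactly the form of the right-hand side in the statement. Lemma~\ref{lem:tech} then gives $\phi(\rho)\le c\big[A\big((2\rho)^{\alpha}-\rho^{\alpha}\big)^{-\beta}+C\big]$, and dividing by $|Q_\rho^{(\theta)}|$ yields the claimed reverse H\"older inequality.

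I expect the last step to be the main obstacle: after substituting the Sobolev-Poincar\'e bound into the $(s-r)^{-1}$- and $(s-r)^{-2}$-weighted terms of the energy estimate, one must arrange that the coefficient of $\phi(s)$ becomes a \emph{fixed} number strictly below $1$, independent of $r$ and $s$, which is what makes Lemma~\ref{lem:tech} applicable; this forces a careful bookkeeping of all powers of $\rho$ and $\theta$, the systematic use of $\rho\le1$ and of the coupling \eqref{Q-intrinsic}, and a precise handling of the singular/non-singular dichotomy \eqref{Q-intrinsic-1}/\eqref{Q-intrinsic-2}, which is exactly where the upper bound for $\theta^{2m}$ enters. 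The case $m=1$ is a simplification, since then $\power u{\frac{1+m}2}=u$, the quantities $\mathcal P$ and $\mathcal O$ coincide, and $\theta$ drops out entirely.
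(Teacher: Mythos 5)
Your overall architecture (energy estimate $+$ Sobolev--Poincar\'e $+$ hole-filling via Lemma~\ref{lem:tech}) is the same as the paper's, but there is a genuine gap in your treatment of the oscillation term $\mathcal O$ in the alternative \eqref{Q-intrinsic-1}. After you apply H\"older and then Young's inequality to separate the Poincar\'e quantity from a leftover factor $\theta^{2m}$, you claim that $\theta^{2m}$ is ``bounded by $c\varepsilon\,\phi(2\rho)$ plus a term of the same structure as $\mathcal D$.'' This is true under \eqref{Q-intrinsic-2}, but under \eqref{Q-intrinsic-1} you only get $\theta^{2m}\le B\biint_{Q_\rho^{(\theta)}}|u|^{1+m}\rho^{-\frac{1+m}{m}}\dxdt$, and after splitting $|u|^{1+m}\le c\,|\power u{\frac{1+m}2}-\power a{\frac{1+m}2}|^2+c\,|a|^{1+m}$ you are left with the term $|a|^{1+m}\rho^{-\frac{1+m}{m}}$, which carries the weight $\rho^{-\frac{1+m}{m}}$ and is \emph{not} dominated by $\biint(G+|u|^{1+m})\dxdt$, by $\phi$, or by the $q_o$-term (take $u$ nearly constant and $\rho$ small). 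Decoupling $\theta^{2m}$ from the oscillation by Young's inequality destroys exactly the structure that saves this case. The paper instead keeps the product intact: writing $\theta^{2m}\lesssim \biint|\power u{\frac{1+m}2}-\power a{\frac{1+m}2}|^2 s^{-\frac{1+m}m}+|a|^{1+m}s^{-\frac{1+m}m}$ and multiplying by the $\theta$-weighted oscillation integral yields two terms $J_1,J_2$; in $J_2$ the dangerous factor $|a|^{1-m}s^{-\frac{1-m}{m}}$ combines \emph{pointwise} with $|\power um-\power am|^2 s^{-2}$ via \eqref{a-b-1} (since $|a|^{1-m}(|u|+|a|)^{2(m-1)}\le(|u|+|a|)^{m-1}$ and $s^{-\frac{1-m}{m}-2}=s^{-\frac{1+m}{m}}$) to give back exactly the Poincar\'e quantity. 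Your argument needs this (or an equivalent) cancellation; as written it does not close in the case \eqref{Q-intrinsic-1}.

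A second, more easily repaired defect: you apply the Sobolev--Poincar\'e inequality on $Q_{2\rho}^{(\theta)}$ and bound its supremum and gradient terms by $\phi(2\rho)$, yet then assert an inequality of the form $\phi(r)\le\vartheta\,\phi(s)+\dots$. Since $\phi(s)\le\phi(2\rho)$, having $\phi(2\rho)$ on the right is strictly weaker and Lemma~\ref{lem:tech} does not apply to $\phi(r)\le\vartheta\,\phi(2\rho)+\dots$ (no iteration is possible, and $\phi(2\rho)$ cannot be reabsorbed). You must invoke Lemma~\ref{lem:sobolev-poin} at the intermediate scale $s$, which your preliminary observation does make legitimate; the paper does precisely this, choosing $\epsilon$ in dependence on $M_{r,s}=s/(s-r)$ so that the coefficient of the scale-$s$ quantity becomes $\tfrac12$.
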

\begin{proof}
We omit the center point $z_o$ throughout the proof. 
Let us consider radii $r,s$ with $\rho\le r<s\le2\rho$.
Then, assumptions \eqref{Q-intrinsic}-\eqref{Q-intrinsic-2} imply that
the cylinder $Q_s^{(\theta)}$ satisfies
\eqref{sub-intrinsic-1}-\eqref{intrinsic_2} with $2^{n+2+\frac2m}B$
instead of $B$. Moreover, Lemma~\ref{lem:energy} with $\power
a{\frac{1+m}2}=(\power u{\frac{1+m}2})_r^{(\theta)}$ yields the bound
\begin{align}\label{i-ii-iii}
\nonumber
&\sup_{t\in\Lambda_r}\bint_{B_r^{(\theta)}}\frac{\big|\power u{\frac{1+m}2}(\cdot,t)-(\power u{\frac{1+m}2})_r^{(\theta)}\big|^2}{r^\frac{1+m}m}\dx
+
\biint_{Q_r^{(\theta)}}|D\power um|^2\dxdt\\\nonumber
&\le
c\,\biint_{Q_s^{(\theta)}}\frac{\big|\power u{\frac{1+m}2}-(\power u{\frac{1+m}2})_r^{(\theta)}\big|^2}{s^\frac{1+m}m-r^\frac{1+m}m}\dxdt\\\nonumber
&\qquad+
c\theta^{\frac{2m(1-m)}{1+m}}\biint_{Q_s^{(\theta)}}\frac{\big|\power um-
\power{\big[( \power u{\frac{1+m}2})_r^{(\theta)}\big]}{\frac{2m}{1+m}} \big|^2}{(s-r)^2}\dxdt\\
&\qquad+
c\,\biint_{Q_s^{(\theta)}}\big(G+|u|^{1+m}\big)\dxdt.
\end{align}
For the first two terms on the right-hand side, we introduce the
abbreviations 
\begin{align*}
{\rm{I}}
:=\biint_{Q_s^{(\theta)}}\frac{\big|\power u{\frac{1+m}2}-(\power u{\frac{1+m}2})_r^{(\theta)}\big|^2}{s^\frac{1+m}m-r^\frac{1+m}m}\dxdt,
\end{align*}
\begin{align*}
{\rm{II}}
:=\theta^{\frac{2m(1-m)}{1+m}}\biint_{Q_s^{(\theta)}}\frac{\big|\power um-
\power{\big[( \power u{\frac{1+m}2})_r^{(\theta)}\big]}{\frac{2m}{1+m}} \big|^2}{(s-r)^2}\dxdt,
\end{align*}
and let
\begin{align*}
M_{r,s}:=\frac{s}{s-r}.
\end{align*}
By Lemma~\ref{lem:mvint} and~\eqref{a-b-2},
the term $\rm{I}$ is bounded by
\begin{align}\label{est-i}
{\rm{I}}
\le
cM_{r,s}^\frac{1+m}m\biint_{Q_s^{(\theta)}}\frac{\big|\power u{\frac{1+m}2}-(\power u{\frac{1+m}2})_s^{(\theta)}\big|^2}{s^\frac{1+m}m}\dxdt.
\end{align}
We now consider the term $\rm{II}$. First, when \eqref{Q-intrinsic-2}
is satisfied, we use \eqref{a-b-2}, H\"older's inequality, and
Lemma~\ref{lem:mvint}, with the result
\begin{align}\label{est-ii}
{\rm{II}}
&\le
cM_{r,s}^2\theta^{\frac{2m(1-m)}{1+m}}\bigg[\biint_{Q_s^{(\theta)}}\frac{\big|\power u{\frac{1+m}2}-( \power u{\frac{1+m}2})_s^{(\theta)} \big|^2}{s^\frac{1+m}m}\dxdt\bigg]^\frac{2m}{1+m}.
\end{align}
Next, in the case $m<1$ we apply Young's inequality with
exponents $\frac{1+m}{1-m}$ and $\frac{1+m}{2m}$, and then \eqref{Q-intrinsic-2} to obtain
\begin{align}\label{est-ii-1}
  {\rm{II}}
  & 
    \le \frac{\eps(1-m)}{B}M_{r,s}^2\theta^{2m}
  +
  \frac{cBM_{r,s}^2}{\epsilon^\frac{1-m}{2m}}\biint_{Q_s^{(\theta)}}\frac{\big|\power
    u{\frac{1+m}2}-( \power u{\frac{1+m}2})_s^{(\theta)}
    \big|^2}{s^\frac{1+m}m}\dxdt
\\
&\le
\epsilon M_{r,s}^2\biint_{Q_s^{(\theta)}}\big(|D\power um|^2+|u|^{1+m}+G\big)\dxdt\nonumber\\
&\qquad+
\frac{cM_{r,s}^2}{\epsilon^\frac{1-m}{2m}}\biint_{Q_s^{(\theta)}}\frac{\big|\power u{\frac{1+m}2}-( \power u{\frac{1+m}2})_s^{(\theta)} \big|^2}{s^\frac{1+m}m}\dxdt
\end{align}
for any $0<\epsilon<1$.
Note that in the case $m=1$, the same estimate immediately follows from
\eqref{est-ii} without the
application of Young's inequality and \eqref{Q-intrinsic-2}. 
Next, when \eqref{Q-intrinsic-1} is satisfied, we deduce
\begin{align*}
\theta^{2m}
&\le
2^{n+2+\frac2m}B\biint_{Q_s^{(\theta)}} \frac{|u|^{1+m}}{s^\frac{1+m}m}\dxdt\\
&\le
c\,\biint_{Q_s^{(\theta)}}\frac{\big|\power u{\frac{1+m}2}-( \power u{\frac{1+m}2})_r^{(\theta)} \big|^2}{s^\frac{1+m}m}\dxdt
+
\frac{c\big|( \power u{\frac{1+m}2})_r^{(\theta)}\big|^2}{s^\frac{1+m}m}.
\end{align*}
This implies that
\begin{align}
{\rm{II}}
&\le
cM_{r,s}^2 (\rm{J_1+J_2}),
\end{align}
where
$$
{\rm{J_1}}
:=
\bigg[ \biint_{Q_s^{(\theta)}}\frac{\big|\power u{\frac{1+m}2}-( \power u{\frac{1+m}2})_r^{(\theta)} \big|^2}{s^\frac{1+m}m}\dxdt \bigg]^{\frac{1-m}{1+m}}
\biint_{Q_s^{(\theta)}}\frac{\big|\power um-
\power{[(  u^{\frac{1+m}2})_r^{(\theta)}]}{\frac{2m}{1+m}} \big|^2}{s^2}\dxdt
$$
and
$$
{\rm{J_2}}
:=
\bigg[ \frac{\big|( \power u{\frac{1+m}2})_r^{(\theta)}\big|^2}{s^\frac{1+m}m}\bigg]^{\frac{1-m}{1+m}}
\biint_{Q_s^{(\theta)}}\frac{\big|\power um-
\power{[(u^{\frac{1+m}2})_r^{(\theta)}]}{\frac{2m}{1+m}} \big|^2}{s^2}\dxdt.
$$
We then use \eqref{a-b-2}, H\"older's inequality, and Lemma~\ref{lem:mvint} for the estimate of $J_1$, while to the term $J_2$ we apply \eqref{a-b-1} and Lemma~\ref{lem:mvint}. In this way we find
\begin{align}
{\rm{J_1+J_2}}
\le
c\, \biint_{Q_s^{(\theta)}}\frac{\big|\power u{\frac{1+m}2}-( \power u{\frac{1+m}2})_s^{(\theta)} \big|^2}{s^\frac{1+m}m}\dxdt ,
\end{align}
and hence
\begin{align}\label{est-ii-2}
{\rm{II}}
\le
cM_{r,s}^2 \biint_{Q_s^{(\theta)}}\frac{\big|\power u{\frac{1+m}2}-( \power u{\frac{1+m}2})_s^{(\theta)} \big|^2}{s^\frac{1+m}m}\dxdt.
\end{align}
In view of \eqref{est-ii-1} and \eqref{est-ii-2},
we obtain in any case 
\begin{align}\label{est-ii-f}
{\rm{II}}
&\le
\epsilon M_{r,s}^2\biint_{Q_s^{(\theta)}}\big(|D\power um|^2+|u|^{1+m}+G\big)\dxdt\nonumber\\
&\qquad+
\frac{cM_{r,s}^2}{\epsilon^\frac{1-m}{2m}}\biint_{Q_s^{(\theta)}}\frac{\big|\power u{\frac{1+m}2}-( \power u{\frac{1+m}2})_s^{(\theta)} \big|^2}{s^\frac{1+m}m}\dxdt,
\end{align}
provided either \eqref{Q-intrinsic-1} or \eqref{Q-intrinsic-2}.
Inserting \eqref{est-i} and \eqref{est-ii-f} into \eqref{i-ii-iii}, we therefore have
\begin{align}
\nonumber
&\sup_{t\in\Lambda_r}\bint_{B_r^{(\theta)}}\frac{\big|\power u{\frac{1+m}2}(\cdot,t)-(\power u{\frac{1+m}2})_r^{(\theta)}\big|^2}{r^\frac{1+m}m}\dx
+
\biint_{Q_r^{(\theta)}}|D\power um|^2\dxdt\\\nonumber
&\qquad\le
\epsilon M_{r,s}^2\biint_{Q_s^{(\theta)}} |D\power um|^2 \dxdt
+
\frac{cM_{r,s}^\frac{1+m}m}{\epsilon^\frac{1-m}{2m}}\biint_{Q_s^{(\theta)}}\frac{\big|\power u{\frac{1+m}2}-( \power u{\frac{1+m}2})_s^{(\theta)} \big|^2}{s^\frac{1+m}m}\dxdt\\\nonumber
&\qquad\qquad+
cM_{r,s}^2\biint_{Q_s^{(\theta)}}\big(G+|u|^{1+m}\big)\dxdt.
\end{align}
Now, we employ Lemma~\ref{lem:sobolev-poin} with $\epsilon$ replaced
by $\epsilon^\frac{1+m}{2m}$ for the second integral on the right-hand side. This leads to
\begin{align}
\nonumber
&\sup_{t\in\Lambda_r}\bint_{B_r^{(\theta)}}\frac{\big|\power u{\frac{1+m}2}(\cdot,t)-(\power u{\frac{1+m}2})_r^{(\theta)}\big|^2}{r^\frac{1+m}m}\dx
+
\biint_{Q_r^{(\theta)}}|D\power um|^2\dxdt\\\nonumber
&\qquad\le
\epsilon c M_{r,s}^\frac{1+m}m\bigg[\sup_{t\in\Lambda_s}\bint_{B_s^{(\theta)}}\frac{\big|\power u{\frac{1+m}2}(\cdot,t)-(\power u{\frac{1+m}2})_s^{(\theta)}\big|^2}{s^\frac{1+m}m}\dx
+
\biint_{Q_s^{(\theta)}}|D\power um|^2\dxdt\bigg]\\\nonumber
&\qquad\qquad+
\frac{c M_{r,s}^\frac{1+m}m}{\epsilon^\frac{2+2m+n(1-m)}{2mn}}\Bigg\{\bigg[\biint_{Q_s^{(\theta)}} |D\power um|^{2q_o} \dxdt\bigg]^\frac1{q_o}\\\nonumber
&\qquad\qquad+
\biint_{Q_s^{(\theta)}}\big(G+|u|^{1+m}\big)\dxdt\Bigg\}.
\end{align}
Choosing $\frac1\epsilon=2cM_{r,s}^\frac{1+m}m$, we arrive at 
\begin{align}
\nonumber
&\sup_{t\in\Lambda_r}\bint_{B_r^{(\theta)}}\frac{\big|\power u{\frac{1+m}2}(\cdot,t)-(\power u{\frac{1+m}2})_r^{(\theta)}\big|^2}{r^\frac{1+m}m}\dx
+
\biint_{Q_r^{(\theta)}}|D\power um|^2\dxdt\\\nonumber
&\le
\frac12\bigg[\sup_{t\in\Lambda_s}\bint_{B_s^{(\theta)}}\frac{\big|\power u{\frac{1+m}2}(\cdot,t)-(\power u{\frac{1+m}2})_s^{(\theta)}\big|^2}{s^\frac{1+m}m}\dx
+
\biint_{Q_s^{(\theta)}}|D\power um|^2\dxdt\bigg]\\\nonumber
&\qquad+
c\bigg(\frac{\rho}{s-r}\bigg)^\frac{(n+2)(1+m)^2}{2nm^2}\Bigg\{\bigg[\biint_{Q_{2\rho}^{(\theta)}} |D\power um|^{2q_o} \dxdt\bigg]^\frac1{q_o}\\\nonumber
&\qquad+
\biint_{Q_{2\rho}^{(\theta)}}\big(G+|u|^{1+m}\big)\dxdt\Bigg\},
\end{align}
and this estimate holds for any $r,s$ with $\rho\le r<s\le2\rho$.
Therefore, we can use Lemma~\ref{lem:tech} to absorb the first term on
the right-hand side. This completes the proof .
\end{proof}

We next provide an auxiliary estimate in the case of an intrinsic
cylinder with the properties \eqref{Q-intrinsic} and \eqref{Q-intrinsic-1} with $B=1$.

\begin{lemma}\label{lem:theta_m}
Let $\frac{(n-2)_+}{n+2}<m\le1$ and $u\in K_\psi$ be a local weak
solution to the variational inequality \eqref{vari-ineq}, where
assumptions \eqref{obs-func}, \eqref{inhom-func}, and \eqref{A-struc} are in force. 
Suppose that $Q_{\rho}^{(\theta)}(z_o)$ is a cylinder with
$0<\rho\le1$, $\theta>0$, and
$Q_{2\rho}^{(\theta)}(z_o)\subset\Omega_T$, which fulfills \eqref{Q-intrinsic} and \eqref{Q-intrinsic-1} with $B=1$. Then we have
\begin{align}
\nonumber
\theta^m
\le\frac1{\sqrt2}\bigg[\biint_{Q_{\rho/2}^{(\theta)}(z_o)} \frac{|u|^{1+m}}{(\rho/2)^\frac{1+m}m}\dxdt\bigg]^\frac12 
+
c\bigg[ \biint_{Q_{2\rho}^{(\theta)}(z_o)} \big( |D\power um|^2 +G + |u|^{1+m} \big) \dxdt\bigg]^\frac12
\end{align}
for some $c=c(n,m,\nu,L)>0$.
\end{lemma}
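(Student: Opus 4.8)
The plan is to localize the super-intrinsic coupling \eqref{Q-intrinsic-1} from $Q_\rho^{(\theta)}(z_o)$ to the smaller cylinder $Q_{\rho/2}^{(\theta)}(z_o)$ by subtracting a suitable mean value of $\power u{\frac{1+m}2}$, and then to dispose of the oscillation term that appears by means of the estimates already obtained in the proof of Lemma~\ref{lem:reverseHolder}. As in the preceding proofs, I suppress the center $z_o=(x_o,t_o)$ in the notation.

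First I would pick the (unique) number $a\in\R$ with $\power a{\frac{1+m}2}=\big(\power u{\frac{1+m}2}\big)_{\rho/2}^{(\theta)}$, the space--time mean of $\power u{\frac{1+m}2}$ over $Q_{\rho/2}^{(\theta)}$. Starting from \eqref{Q-intrinsic-1} with $B=1$, that is $\theta^{2m}\le\biint_{Q_\rho^{(\theta)}}\rho^{-\frac{1+m}m}|u|^{1+m}\dxdt$, I would use the pointwise bound $|u|^{1+m}=\big|\power u{\frac{1+m}2}\big|^2\le2\big|\power u{\frac{1+m}2}-\power a{\frac{1+m}2}\big|^2+2\big|\power a{\frac{1+m}2}\big|^2$, estimate the average of the first summand over $Q_\rho^{(\theta)}$ by its supremum over the time slices, and bound the second summand by Jensen's inequality, $\big|\power a{\frac{1+m}2}\big|^2\le\biint_{Q_{\rho/2}^{(\theta)}}|u|^{1+m}\dxdt$. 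Since $\rho^{-\frac{1+m}m}=2^{-\frac{1+m}m}(\rho/2)^{-\frac{1+m}m}$ and $2^{1-\frac{1+m}m}\le\tfrac12$ for every $m\le1$ (with equality at $m=1$), this produces
\begin{align*}
\theta^{2m}
&\le
\tfrac12\biint_{Q_{\rho/2}^{(\theta)}}\frac{|u|^{1+m}}{(\rho/2)^{\frac{1+m}m}}\dxdt\\
&\qquad+
2\sup_{t\in\Lambda_\rho}\bint_{B_\rho^{(\theta)}}\frac{\big|\power u{\frac{1+m}2}(\cdot,t)-\power a{\frac{1+m}2}\big|^2}{\rho^{\frac{1+m}m}}\dx ,
\end{align*}
and taking square roots (using $\sqrt{x+y}\le\sqrt x+\sqrt y$) already yields the first term on the right-hand side of the claim, with the sharp prefactor $\tfrac1{\sqrt2}$.

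It then remains to bound the supremum appearing above by $c\biint_{Q_{2\rho}^{(\theta)}}\big(|D\power um|^2+G+|u|^{1+m}\big)\dxdt$. In a first step I would replace $\power a{\frac{1+m}2}$ by the mean $\big(\power u{\frac{1+m}2}\big)_\rho^{(\theta)}$ over the full cylinder $Q_\rho^{(\theta)}$; the square of the difference of these two constants is, up to a constant depending only on $n$ and $m$, bounded by $\biint_{Q_\rho^{(\theta)}}\rho^{-\frac{1+m}m}\big|\power u{\frac{1+m}2}-\big(\power u{\frac{1+m}2}\big)_\rho^{(\theta)}\big|^2\dxdt$, which in turn is dominated by the supremum in question. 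Next, under the hypotheses of the lemma the cylinder $Q_\rho^{(\theta)}$ satisfies \eqref{sub-intrinsic-1} and \eqref{intrinsic_1} with $2^{n+2+\frac2m}B$ in place of $B$ (as observed in the proof of Lemma~\ref{lem:reverseHolder}), so the combination of the energy estimate (Lemma~\ref{lem:energy}), the Sobolev--Poincar\'e inequality (Lemma~\ref{lem:sobolev-poin}) and the iteration Lemma~\ref{lem:tech} that is carried out in the proof of Lemma~\ref{lem:reverseHolder} --- more precisely, the application of Lemma~\ref{lem:tech} at its end --- in fact yields
\begin{align*}
&\sup_{t\in\Lambda_\rho}\bint_{B_\rho^{(\theta)}}\frac{\big|\power u{\frac{1+m}2}(\cdot,t)-\big(\power u{\frac{1+m}2}\big)_\rho^{(\theta)}\big|^2}{\rho^{\frac{1+m}m}}\dx
+
\biint_{Q_\rho^{(\theta)}}|D\power um|^2\dxdt\\
&\qquad\le
c\bigg[\biint_{Q_{2\rho}^{(\theta)}}|D\power um|^{2q_o}\dxdt\bigg]^{\frac1{q_o}}
+
c\biint_{Q_{2\rho}^{(\theta)}}\big(G+|u|^{1+m}\big)\dxdt .
\end{align*}
Since $q_o<1$, Jensen's inequality gives $\big[\biint_{Q_{2\rho}^{(\theta)}}|D\power um|^{2q_o}\dxdt\big]^{1/q_o}\le\biint_{Q_{2\rho}^{(\theta)}}|D\power um|^2\dxdt$, so the right-hand side is $\le c\biint_{Q_{2\rho}^{(\theta)}}\big(|D\power um|^2+G+|u|^{1+m}\big)\dxdt$. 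Inserting this into the display of the previous step and taking square roots completes the proof.

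The only genuinely non-elementary ingredient is the second step, namely controlling the oscillation of $\power u{\frac{1+m}2}$ in the time direction without a weak time derivative at hand; this is precisely what the apparatus of the previous sections (gluing lemma, Sobolev--Poincar\'e, reverse H\"older) was built for, and consequently this lemma is essentially a by-product of that apparatus. The only point requiring care in the elementary first step is that the localization constant has to come out exactly as $\tfrac12$ (equivalently $\tfrac1{\sqrt2}$ after taking roots); this is why the mean value is taken over $Q_{\rho/2}^{(\theta)}$ rather than $Q_\rho^{(\theta)}$, and why the inequality $2^{1-\frac{1+m}m}\le\tfrac12$ is invoked in its sharp form.
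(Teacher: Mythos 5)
Your argument is correct and is essentially the proof the paper points to: the paper's own proof simply defers to \cite[Lemma 6.2]{BDS-singular-higher-int}, whose argument is exactly your combination of the localized intrinsic coupling (splitting $|u|^{1+m}$ into oscillation plus mean, with the factor $2^{1-\frac{1+m}{m}}\le\frac12$ producing the sharp $\frac1{\sqrt2}$) with the energy estimate and the Sobolev--Poincar\'e inequality. Your observation that the iteration at the end of the proof of Lemma~\ref{lem:reverseHolder} in fact controls the sup term $\sup_{t\in\Lambda_\rho}\bint_{B_\rho^{(\theta)}}\rho^{-\frac{1+m}{m}}\big|\power u{\frac{1+m}2}(\cdot,t)-(\power u{\frac{1+m}2})_\rho^{(\theta)}\big|^2\dx$ as well as the gradient term is valid and is precisely the ingredient needed.
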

\begin{proof}
  The asserted estimate follows from the energy estimate in
  Lemma~\ref{lem:energy} and the Sobolev-Poincar\'e estimate
  established in
  Lemma~\ref{lem:sobolev-poin} by the exact same arguments as in
  \cite[Lemma 6.2]{BDS-singular-higher-int}. 
  We just need to replace $|F|^2$ by $G+|u|^{1+m}$ and to apply
  Lemma~\ref{lem:energy} and Lemma~\ref{lem:sobolev-poin} instead of
  \cite[Lemma 4.1]{BDS-singular-higher-int} and \cite[Lemma
    5.1]{BDS-singular-higher-int}. 
\end{proof}

\section{Proof of the higher integrability}\label{sec:hi}
In this section we prove Theorem~\ref{thm:main} by establishing higher
integrability of the spatial gradient for solutions of our obstacle problems. 
In order to specify the setting, we fix a cylinder
$Q_{8R}(x_o,t_o)\equiv B_{8R}(x_o)\times \big(t_o-(8R)^\frac{1+m}{m},
t_o+(8R)^\frac{1+m}{m}\big)\subset\Omega_T$ with $0<R\le1$.
In the following, we omit the center in the notation and
write $Q_{\rho}:=Q_{\rho}(x_o,t_o)$ for short, with
$\rho\in(0,8R]$. Next, we define
\begin{align}\label{def:lambda_o}
	\lambda_o
	:=
	\Bigg[\biint_{Q_{4R}} \bigg(
	\frac{\abs{u}^{1+m}}{(4 R)^\frac{1+m}m} +
	\big|D\power{u}{m}\big|^2 +
	G+1\bigg) \dxdt\Bigg]^{\frac{d}{2m}},
\end{align}
where $G$ and $d$ are as in \eqref{def-G} and \eqref{def-d}, respectively.
Recalling definition \eqref{def-Q} of the cylinders $Q_\rho^{(\theta)}(z)$, we notice that $Q_\rho^{(\theta)}(z)\subset Q_{4R}$ for all $z\in Q_{2R}$, $\rho\in(0,R]$, and $\theta\ge 1$.

Now we introduce a system of sub-intrinsic cylinders that reflect the
anisotropic scaling behaviour of the singular porous medium equation
and are suitable for the derivation of gradient estimates.
The idea of the construction goes back to
\cite{ Schwarzacher, Gianazza-Schwarzacher, Gianazza-Schwarzacher:m<1}. 
Here, we follow the presentation from 
\cite{BDS-singular-higher-int} to briefly recall the definition of the
cylinders and their properties.
For a point $z_o\in Q_{2R}$ and a radius $\rho\in (0,R]$ we define
\begin{equation}\label{def-theta-tilde}
  \widetilde\theta_{z_o;\rho}
  :=
  \inf\bigg\{\theta\in[\lambda_o,\infty):
  \biint_{Q^{(\theta)}_\rho(z_o)}
  \frac{\abs{u}^{1+m}}{\rho^{\frac{1+m}m}}\dx\dt 
  \le 
  \theta^{2m} \bigg\}.
\end{equation}
We point out that for this definition,
the restriction $m>\frac{(n-2)_+}{n+2}$ is crucial to ensure that the
infimum is not taken over the empty set. This becomes evident after
re-writing the condition in the above infimum to
\begin{equation*}
   \frac{1}{|Q_\rho|}
   \iint_{Q^{(\theta)}_\rho(z_o)} 
   \frac{\abs{u}^{1+m}}{\rho^{\frac{1+m}m}} \dx\dt 
   \le 
   \theta^{\frac{2m}{d}},
\end{equation*} 
with the scaling deficit $d$ defined in ~\eqref{def-d}. For
exponents $m>\frac{(n-2)_+}{n+2}$, this scaling deficit satisfies
$d>0$. This implies that the right-hand side blows up
when $\theta\to\infty$, while the left-hand side tends to
zero. Therefore, the parameters $\widetilde\theta_{z_o;\rho}$ are
well-defined for our range of exponents.
Next, since the mapping $(0,R]\ni\rho\to \widetilde\theta_\rho$ might not be
decreasing, we replace it by a decreasing variant defined via
$$
	\theta_{z_o;\rho}
	:=
	\max_{r\in[\rho,R]} \widetilde\theta_{z_o;r}
        \qquad\mbox{for }\rho\in(0,R].
$$
Moreover, we define associated radii by
\begin{equation}\label{rho-tilde}
	\widetilde\rho
	:=
	\left\{
	\begin{array}{cl}
	R, &
	\quad\mbox{if $\theta_\rho=\lambda_o$,} \\[5pt]
	\min\big\{s\in[\rho, R]: \theta_s=\widetilde \theta_s \big\}, &
	\quad\mbox{if $\theta_\rho>\lambda_o$.}
	\end{array}
	\right.
\end{equation}
For the derivation of the gradient estimate we work with the cylinders
$Q_{\rho}^{(\theta_{z_o;\rho})}(z_o)$.
Due to the monotonic dependence of the
functions $\theta_{z_o;\rho}$ on $\rho$, these cylinders are nested in
the sense that
\begin{equation*}
  Q_{\rho}^{(\theta_{z_o;\rho})}(z_o)\subset
  Q_{r}^{(\theta_{z_o;r})}(z_o)
  \qquad\mbox{whenever }0<\rho<r\le R.
\end{equation*}
A detailed analysis of the properties of these cylinders and the
functions $\theta_{z_o;\rho}$ can be found
in \cite[Section 7.1]{BDS-singular-higher-int}. Here, we only state
the properties that are necessary for our purposes and refer the reader
to \cite[Section 7.1]{BDS-singular-higher-int} for the proofs.

The mapping $\rho\mapsto\theta_{z_o;\rho}\in[\lambda_o,\infty)$ is
continuous and monotonically decreasing and the corresponding
cylinders $Q_s^{(\theta_{z_o;\rho})}(z_o)$ are sub-intrinsic for all
$s\in[\rho,R]$ in the sense 
\begin{align}\label{sub-intrinsic-rho}
	\biint_{Q_{s}^{(\theta_{z_o;\rho})}(z_o)} \frac{\abs{u}^{1+m}}{s^\frac{1+m}m} \dxdt
	\le 
	\theta_{z_o;\rho}^{2m}
	\quad\mbox{for all $s\in[\rho, R]$.}
\end{align}
For the radius $\Tilde{\rho}\in[\rho,R]$ defined in \eqref{rho-tilde}
it holds 
\begin{align}\label{tilde-rho}
\theta_{z_o;s}=\theta_{z_o;\rho}\quad\mbox{for all $s\in[\rho,\Tilde{\rho}]$}.
\end{align}
Moreover, we have either that
\begin{align}\label{intrinsic-tilde-rho}
	\biint_{Q_{\Tilde{\rho}}^{(\theta_{z_o;\rho})}(z_o)} \frac{\abs{u}^{1+m}}{\Tilde{\rho}^\frac{1+m}m} \dxdt
	= 
	\theta_{z_o;\rho}^{2m}
\quad
\mbox{and}
\quad
\theta_{z_o;\rho}>\lambda_o
\end{align}
or that 
\begin{align}\label{tilde-rho-R}
\Tilde{\rho}=R\quad\mbox{and}\quad \theta_{z_o;\rho}=\lambda_o.
\end{align}
Note that in any case, the cylinder $Q_{\Tilde{\rho}}^{(\theta_{z_o;\rho})}(z_o)$
is sub-intrinsic according to \eqref{sub-intrinsic-rho} with $s=\Tilde\rho$.
Furthermore, we have the estimates 
\begin{align}\label{theta-compare}
\theta_{z_o;s}
\le
\theta_{z_o;\rho}
\le
\bigg(\frac{s}{\rho}\bigg)^{\frac{d}{2m}(n+2+\frac2m)}\theta_{z_o;s}, \quad\mbox{whenever $0<\rho\le s\le R$}
\end{align}
and 
\begin{align}\label{theta-R}
\theta_{z_o;R}\le4^{\frac{d}{2m}(n+2+\frac2m)}\lambda_o.
\end{align}
The proofs of \eqref{sub-intrinsic-rho}-\eqref{theta-R} can be found in \cite[Section 7.1]{BDS-singular-higher-int}.
Finally, we recall the following Vitali type covering property of the
above cylinders, which has first been observed in
\cite{Gianazza-Schwarzacher}. We state a version that has
been established in \cite[Lemma 7.1]{BDS-singular-higher-int}.
\begin{lemma}\label{lem:vitali}
Let $\Omega$ be a bounded domain in $\R^n$ and let
$Q_{\rho}^{(\theta_{z;\rho})}(z)\subset\Omega$ be a system of
cylinders with the properties \eqref{sub-intrinsic-rho}-\eqref{theta-R}.
Then there exists a constant $ c_o= c_o(n,m)\ge 20$ 
such that for any collection $\mathcal X$ of cylinders $Q_{4\rho}^{(\theta_{z;\rho})}(z)$ with radius $\rho\in(0,\tfrac{R}{ c_o})$ there is a countable subcollection $\mathcal Y$ of disjoint cylinders in $\mathcal X$ such that  
\begin{equation}\label{covering}
	\bigcup_{Q\in\mathcal X} Q
	\subset 
	\bigcup_{Q\in\mathcal Y} \Tilde Q,
\end{equation}
where $\Tilde Q$ is the $\frac{c_o}{4}$-times enlarged cylinder $Q$, i.e.~$\Tilde Q=Q_{c_o \rho}^{(\theta_{z;\rho})}(z)$ if $Q=Q_{4\rho}^{(\theta_{z;\rho})}(z)$.
\end{lemma}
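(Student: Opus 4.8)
\emph{Plan.} I would establish the covering by a Vitali‑type exhaustion adapted to the anisotropic cylinders $Q_\rho^{(\theta)}$, the essential point being a \emph{scale‑invariant} comparison of the intrinsic parameters of two intersecting cylinders. Throughout, write $z=(x_o,t_o)$ and $z'=(x_o',t_o')$ for the centres of the cylinders under consideration.

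First I would split $\mathcal X$ into the dyadic subfamilies $\mathcal X_j:=\{Q_{4\rho}^{(\theta_{z;\rho})}(z)\in\mathcal X:\,\tfrac R{2^jc_o}<\rho\le\tfrac R{2^{j-1}c_o}\}$, $j\ge1$, which exhaust $\mathcal X$ because all occurring radii lie in $(0,R/c_o)$. Then I would select greedily from coarse to fine scales: let $\mathcal Y_1$ be a maximal pairwise disjoint subfamily of $\mathcal X_1$, and inductively let $\mathcal Y_j$ be a maximal subfamily of $\mathcal X_j$ consisting of cylinders disjoint from all members of $\mathcal Y_1\cup\dots\cup\mathcal Y_{j-1}$; such maximal families exist by Zorn's lemma and are at most countable, since the cylinders have positive measure and sit inside a bounded set. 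Put $\mathcal Y:=\bigcup_{j\ge1}\mathcal Y_j$. This family is countable and pairwise disjoint, and by maximality every $Q=Q_{4\rho}^{(\theta_{z;\rho})}(z)\in\mathcal X_j$ that is not itself in $\mathcal Y$ intersects some $Q'=Q_{4r}^{(\theta_{z';r})}(z')\in\mathcal Y_k$ with $k\le j$, whence $\rho<2r$ by comparing the dyadic ranges. It then remains to show $Q\subset\widetilde Q'=Q_{c_or}^{(\theta_{z';r})}(z')$.

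The main obstacle is the following comparability: \emph{if $Q_{4\rho}^{(\theta_{z;\rho})}(z)\cap Q_{4r}^{(\theta_{z';r})}(z')\neq\emptyset$ and $\rho<2r$, then $\theta_{z';r}\le c(n,m)\,\theta_{z;\rho}$, with a constant independent of the ratio $r/\rho$.} To see this, I may assume $\theta_{z';r}>\theta_{z;\rho}$ and $\theta_{z';r}>\lambda_o$ (otherwise the inequality is immediate, since $\theta_{z;\rho}\ge\lambda_o$). By \eqref{intrinsic-tilde-rho} and \eqref{tilde-rho} there is then a radius $\sigma\in[r,R]$ with
\begin{equation*}
  \biint_{Q_\sigma^{(\theta_{z';r})}(z')}\frac{|u|^{1+m}}{\sigma^{\frac{1+m}m}}\dxdt=\theta_{z';r}^{2m}.
\end{equation*}
Writing $\kappa:=\tfrac{m(m-1)}{1+m}\le0$ and using that the two cylinders meet, that $\rho<2r\le2\sigma$, and that $\theta_{z';r}^\kappa<\theta_{z;\rho}^\kappa$, an elementary estimate of centres, radii and time–lengths shows $Q_\sigma^{(\theta_{z';r})}(z')\subset Q_s^{(\theta_{z;\rho})}(z)$ for $s:=C\sigma$ with $C=C(n,m)$ — provided $s\le R$; in the opposite case $C\sigma>R$ one concludes directly from the monotonicity bound \eqref{theta-compare} and \eqref{theta-R} that $\theta_{z';r}=\theta_{z';\sigma}\le c\lambda_o\le c\,\theta_{z;\rho}$. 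In the main case, since $s\in[\rho,R]$ the sub‑intrinsic property \eqref{sub-intrinsic-rho} at the centre $z$ gives $\biint_{Q_s^{(\theta_{z;\rho})}(z)}\tfrac{|u|^{1+m}}{s^{(1+m)/m}}\dxdt\le\theta_{z;\rho}^{2m}$; combining this with the displayed equality, enlarging the domain of integration from $Q_\sigma^{(\theta_{z';r})}(z')$ to $Q_s^{(\theta_{z;\rho})}(z)$, and inserting the volume ratio $|Q_s^{(\theta_{z;\rho})}(z)|/|Q_\sigma^{(\theta_{z';r})}(z')|=C^{\,n+\frac{1+m}m}\big(\theta_{z;\rho}/\theta_{z';r}\big)^{n\kappa}$ produces
\begin{equation*}
  \theta_{z';r}^{\,2m+n\kappa}\le C^{\,n+\frac{2(1+m)}m}\,\theta_{z;\rho}^{\,2m+n\kappa}.
\end{equation*}
Since $2m+n\kappa=\tfrac{2m}d>0$ precisely because $m>\tfrac{(n-2)_+}{n+2}$, raising to the power $\tfrac d{2m}$ gives the claim.

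Granting the comparability, the inclusion $Q\subset\widetilde Q'$ is routine. For the time intervals one has $|t_o-t_o'|<(4\rho)^{\frac{1+m}m}+(4r)^{\frac{1+m}m}$, so $\Lambda_{4\rho}(t_o)\subset\Lambda_{c_or}(t_o')$ once $c_o^{(1+m)/m}\ge c(m)$; for the spatial balls, the centres are less than $4\rho\theta_{z;\rho}^\kappa+4r\theta_{z';r}^\kappa$ apart, so using $\rho<2r$ together with $\theta_{z;\rho}^\kappa\le c^{|\kappa|}\theta_{z';r}^\kappa$ (from comparability and $\kappa\le0$) one obtains $B_{4\rho}^{(\theta_{z;\rho})}(x_o)\subset B_{c_or}^{(\theta_{z';r})}(x_o')$ once $c_o\ge c(n,m)$. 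Choosing $c_o=c_o(n,m)\ge20$ large enough to meet all these requirements completes the proof; note that for $m=1$ one has $\kappa=0$, the parameters $\theta$ drop out of the geometry, and the argument collapses to the classical parabolic Vitali covering.
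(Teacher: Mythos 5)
Your argument is correct, and it is essentially the standard proof of this covering lemma: the paper itself does not prove the statement but refers to \cite[Lemma 7.1]{BDS-singular-higher-int}, and your dyadic selection together with the key comparability $\theta_{z';r}\le c(n,m)\,\theta_{z;\rho}$ for intersecting cylinders with $\rho<2r$ — obtained by enclosing the intrinsic cylinder $Q_{\tilde r}^{(\theta_{z';r})}(z')$ in a sub-intrinsic cylinder centred at $z$ and exploiting that $2m+n\kappa=\tfrac{2m}{d}>0$ for $m>\tfrac{(n-2)_+}{n+2}$ — is exactly the mechanism used there. The only caveats, which you handle correctly, are the fallback via \eqref{theta-compare}--\eqref{theta-R} when the enlarged radius $C\sigma$ exceeds $R$ and the sign of $\kappa\le0$ in the spatial inclusion.
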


Now we are in a position to prove our main theorem.

\begin{proof}
Let us define upper level sets of $|D\power{u}{m}|$ by 
$$
	\boldsymbol E(\rho,\lambda)
	:=
	\Big\{z\in Q_{\rho}: 
	\mbox{$z$ is a Lebesgue point of $\big|D\power{u}{m}\big|$ and 
	$\big|D\power{u}{m}\big|(z) > \lambda^{m}$}\Big\}
$$
for $\rho\in(0,2R]$ and $\lambda>\lambda_o$.
Here, the notion of Lebesgue point is to be understood with respect to
the family of parabolic cylinders
$Q_{\rho}^{(\theta_{z_o;\rho})}(z_o)$ introduced above.
By virtue of the Vitali type covering lemma~\ref{lem:vitali}, 
Lebesgue's differentiation theorem also holds for this family of
parabolic cylinders, cf. \cite[2.9.1]{Federer}. Therefore, by the restriction to
Lebesgue points in the definition of $\boldsymbol E(\rho,\lambda)$
we only exclude a set of vanishing Lebesgue measure.

Moreover, we fix radii $R\le r_1<r_2\le 2R$.
We note that 
$
	Q_\rho^{(\theta)}(z_o) \subset Q_\rho (z_o) \subset Q_{r_2}\subset Q_{2R}
$
for any $z_o\in Q_{r_1}$, $\theta\ge1$ and $\rho\in(0,r_2-r_1]$.
Suppose that $z_o\in E(r_1,\lambda)$. Then we have from the definition of $E(r_1,\lambda)$
\begin{equation}\label{larger-lambda}
	\lim_{s\downarrow 0} 
	\biint_{Q_{s}^{(\theta_{z_o;s})}(z_o)} 
	\Big(\big|D\power{u}{m}\big|^2 + G+|u|^{1+m} \Big) 
	\dxdt
	\ge
	\big|D\power{u}{m}\big|^2(z_o)
	>
	\lambda^{2m}.
\end{equation}
Moreover, for any $s\in[\frac{r_2-r_1}{c_o},R]$, where $c_o\ge20$
denotes the constant from Lemma~\ref{lem:vitali}, we deduce by the
definition of $\lambda_o$, \eqref{theta-compare}, and \eqref{theta-R}
\begin{align}\label{less-lambda}
\nonumber
	\biint_{Q_{s}^{(\theta_{z_o;s})}(z_o)} 
	\Big(\big|D\power{u}{m}\big|^2 + G+|u|^{1+m}\Big)\dxdt
	&\le
	4^\frac{1+m}m\frac{|Q_{4R}|}{|Q_{s}|}\theta_{z_o;s}^{\frac{nm(1-m)}{1+m}}
	\lambda_o^\frac{2m}d \\\nonumber
	&\le
	4^\frac{1+m}m\Big(\frac{4R}{s}\Big)^\frac{d(n+2)(1+m)}{2m}
	\lambda_o^{2m} \\
	&\le
	N^{2m} \lambda_o^{2m} 
	<
	\lambda^{2m},
\end{align}
provided $\lambda>N\lambda_o$,
where
\begin{equation}\label{choice_lambda}
	N
	:=
	4^\frac{1+m}m\Big(\frac{4 c_o R}{r_2-r_1}\Big)^{\frac{d(n+2)(1+m)}{4m^2}}
	>1.
\end{equation}
For the following argument, we only consider parameters $\lambda>N\lambda_o$.
We now combine \eqref{larger-lambda} and \eqref{less-lambda}, and we
also observe that the left-hand side of \eqref{less-lambda}
continuously depends on $s$ by the continuity of the mapping
$s\mapsto\theta_{z_o;s}$ and the absolute continuity of the
integral. Therefore, for any $z_o\in E(r_1,\lambda)$
the intermediate value theorem guarantees the
existence of a radius $\rho_{z_o} \in(0, \frac{r_2-r_1}{c_o})$ such that
\begin{align}\label{=lambda}
	\biint_{Q_{\rho_{z_o}}^{(\theta_{z_o;\rho_{z_o}})}(z_o)} 
	\Big(\big|D\power{u}{m}\big|^2 + G+|u|^{1+m}\Big) \dxdt
	=
	\lambda^{2m}
\end{align}
and
\begin{align}\label{<lambda}
	\biint_{Q_{s}^{(\theta_{z_o;s})}(z_o)} 
	\Big(\big|D\power{u}{m}\big|^2 + G+|u|^{1+m}\Big) \dxdt
	<
	\lambda^{2m}
	\qquad
	\mbox{for all $s\in (\rho_{z_o}, R]$.}
\end{align}
Thanks to \eqref{theta-compare}, \eqref{=lambda}, and \eqref{<lambda}, we obtain for any $\rho_{z_o}\le s<\tau\le R$ that
\begin{align}\label{<lambda-1}
	&\biint_{Q_{\tau}^{(\theta_{z_o;s})}(z_o)} 
	\Big(\big|D\power{u}{m}\big|^2 + G+|u|^{1+m}\Big) \dxdt\\\nonumber
	&\le
	\bigg(\frac{\theta_{z_o;s}}{\theta_{z_o;\tau}}\bigg)^\frac{nm(1-m)}{1+m}
	\biint_{Q_{\tau}^{(\theta_{z_o;\tau})}(z_o)} 
	\Big(\big|D\power{u}{m}\big|^2 + G+|u|^{1+m}\Big) \dxdt\\\nonumber
	&\le
	\bigg(\frac\tau s \bigg)^{\frac{nd(1-m)}{2(1+m)}(n+2+\frac2m)} \lambda^{2m}.
\end{align}

\subsection{A Reverse H\"older Inequality}
For a fixed point $z_o\in E(r_1,\lambda)$ and $\lambda>N\lambda_o$, we consider the
radius
$\rho_{z_o}\in(0,\frac{r_2-r_1}{c_o})$ constructed above, which
satisfies in particular \eqref{=lambda} and \eqref{<lambda}.
Our next goal is to establish a reverse H\"older inequality on the
cylinder $Q_{2\rho_{z_o}}^{(\theta_{z_o;\rho_{z_o}})}(z_o)$ by using
Lemma~\ref{lem:reverseHolder}. To do this, we need to verify that the
assumptions of Lemma~\ref{lem:reverseHolder} are fulfilled on the
cylinder $Q_{2\rho_{z_o}}^{(\theta_{z_o;\rho_{z_o}})}(z_o)$.
We first observe that \eqref{sub-intrinsic-rho} with $s=4\rho_{z_o}$
implies 
\begin{align}
\biint_{Q_{4\rho_{z_o}}^{(\theta_{z_o;\rho_{z_o}})}(z_o)} \frac{|u|^{1+m}}{(4\rho_{z_o})^\frac{1+m}m}\dxdt
\le
\theta_{z_o;\rho_{z_o}}^{2m},
\end{align}
which implies that assumption \eqref{Q-intrinsic} holds with the constant $B=1$.
We next consider the radius $\Tilde \rho_{z_o}\in[\rho_{z_o},R]$,
which has the properties \eqref{tilde-rho}-\eqref{tilde-rho-R},
and distinguish between the cases $\Tilde\rho_{z_o}\le2\rho_{z_o}$ and $\Tilde\rho_{z_o}>2\rho_{z_o}$.
In the first case, we deduce from \eqref{intrinsic-tilde-rho} that
\begin{align}
\theta_{z_o;\rho_{z_o}}^{2m}
=
\biint_{Q_{\Tilde\rho_{z_o}}^{(\theta_{z_o;\rho_{z_o}})}(z_o)} \frac{|u|^{1+m}}{\Tilde\rho_{z_o}^\frac{1+m}m}\dxdt
\le
2^{n+2+\frac2m}
\biint_{Q_{{2\rho}_{z_o}}^{(\theta_{z_o;\rho_{z_o}})}(z_o)} \frac{|u|^{1+m}}{(2\rho_{z_o})^\frac{1+m}m}\dxdt
\end{align}
holds true. 
This implies \eqref{Q-intrinsic-1} with
$B=2^{n+2+\frac2m}$. Therefore, Lemma~\ref{lem:reverseHolder} is
applicable in the case $\Tilde\rho_{z_o}\le 2\rho_{z_o}$.
It remains to consider the case $\Tilde\rho_{z_o}>2\rho_{z_o}$.
In this case, we claim that 
\begin{align}\label{claim}
\theta_{z_o;\rho_{z_o}}\le c\lambda
\end{align}
holds true for some $c=c(n,m,\nu,L)>0.$
If $\Tilde\rho_{z_o}>\frac R2$, we use \eqref{theta-compare} and
\eqref{theta-R} to deduce
$
\theta_{z_o;\rho_{z_o}}
=
\theta_{z_o;\Tilde\rho_{z_o}}\le8^{\frac{d}{2m}(n+2+\frac2m)}\lambda_o†
\le
8^{\frac{d}{2m}(n+2+\frac2m)}\lambda,
$
which implies the claim \eqref{claim}.
In the case $\Tilde\rho_{z_o}\le\frac R2$, however, we observe 
that $Q_{\Tilde\rho_{z_o}}^{(\theta_{z_o;\rho_{z_o}})}(z_o)$ is
intrinsic by \eqref{intrinsic-tilde-rho} and
$Q_{2\Tilde\rho_{z_o}}^{(\theta_{z_o;\rho_{z_o}})}(z_o)$ is
sub-intrinsic by \eqref{sub-intrinsic-rho} with $s=2\Tilde\rho_{z_o}\in[\rho_{z_o},R]$.  
Hence, the cylinder
$Q_{\Tilde\rho_{z_o}}^{(\theta_{z_o;\rho_{z_o}})}(z_o)$ satisfies the
assumptions of Lemma~\ref{lem:theta_m}, which implies
\begin{align}
\nonumber
\theta_{z_o;\rho_{z_o}}^m
&\le
\frac1{\sqrt2}\bigg[\biint_{Q_{\Tilde\rho_{z_o}/2}^{(\theta_{z_o;\rho_{z_o}})}(z_o)} \frac{|u|^{1+m}}{(\Tilde\rho_{z_o}/2)^\frac{1+m}m}\dxdt\bigg]^{\frac12} \\
&\qquad+
c\bigg[ \biint_{Q_{2\Tilde\rho_{z_o}}^{(\theta_{z_o;\rho_{z_o}})}(z_o)} \big(|D\power um|^2 +G + |u|^{1+m} \big)\dxdt\bigg]^{\frac12}.
\end{align}
Since $\Tilde\rho_{z_o}/2\in[\rho_{z_o},R]$ we can estimate the first term
on the right-hand side by means of \eqref{sub-intrinsic-rho}. Moreover,
observing $\rho_{z_o}\le\Tilde\rho_{z_o}<2\Tilde\rho_{z_o}\le R$ and
$\theta_{z_o;\rho_{z_o}}=\theta_{z_o;\Tilde\rho_{z_o}}$, we estimate the last term on the right-hand side by
\eqref{<lambda-1} with $s=\Tilde\rho_{z_o}$ and
$\tau=2\Tilde\rho_{z_o}$. In this
way we deduce
\begin{align}
\theta_{z_o;\rho_{z_o}}^m
&\le
\frac1{\sqrt2}\theta_{z_o;\rho_{z_o}}^m
+
c\lambda^m,
\end{align}
which yields \eqref{claim} by re-absorbing $\frac1{\sqrt2}\theta_{z_o;\rho_{z_o}}^m$ into the left-hand side.
Therefore we obtain the claim \eqref{claim} in any case.

Now we combine \eqref{=lambda} and \eqref{claim} to get
\begin{align}
\theta_{z_o;\rho_{z_o}}^{2m}
\le c\lambda^{2m}
\le
c\,\biint_{Q_{2\rho_{z_o}}^{(\theta_{z_o;\rho_{z_o}})}(z_o)}\big( |D\power um|^2 +G + |u|^{1+m} \big)\dxdt.
\end{align}
This implies that in the case of $\Tilde\rho_{z_o}>2\rho_{z_o}$, the cylinder $Q_{2\rho_{z_o}}^{(\theta_{z_o;\rho_{z_o}})}(z_o)$ fulfills assumption \eqref{Q-intrinsic-2}.
Consequently, we have verified the hypotheses of Lemma~\ref{lem:reverseHolder} in any case. Thus, this lemma yields the reverse H\"older inequality
\begin{align}\label{reverseHolder}
\nonumber
\biint_{Q_{2\rho_{z_o}}^{(\theta_{z_o;\rho_{z_o}})}(z_o)}|D\power um|^2\dxdt
&\le
c\bigg[\biint_{Q_{4\rho_{z_o}}^{(\theta_{z_o;\rho_{z_o}})}(z_o)} |D\power um|^{2q_o} \dxdt\bigg]^\frac1{q_o}\\
&\qquad+
c\,\biint_{Q_{4\rho_{z_o}}^{(\theta_{z_o;\rho_{z_o}})}(z_o)}\big(G+|u|^{1+m}\big)\dxdt,
\end{align}
where 
\begin{align*}
q_o:=\bigg\{\frac12,\frac{n(1+m)}{2(nm+1+m)}\bigg\}<1.
\end{align*}

\subsection{Estimate on upper level sets}
So far, we have verified that for each $z_o\in \boldsymbol E(r_1,\lambda)$ with $\lambda>N\lambda_o$ there is a cylinder $Q_{\rho_{z_o}}^{(\theta_{z_o;\rho_{z_o}})}(z_o)$ such that \eqref{=lambda}-\eqref{<lambda-1} and \eqref{reverseHolder} are satisfied.
Now we define the upper level set  
$$
	\boldsymbol F(\rho,\lambda)
	:=
	\Big\{z\in Q_{\rho}: 
	\mbox{$z$ is a Lebesgue point of $(G+|u|^{1+m})$ and 
	$G+|u|^{1+m}>\lambda^{2m}$}\Big\}.
$$
According to \eqref{=lambda} and \eqref{reverseHolder} we have for any $\eta\in (0,1]$  
\begin{align}\label{est-lambda2m}
\nonumber
	\lambda^{2m}
	&=
	\biint_{Q_{\rho_{z_o}}^{(\theta_{z_o;\rho_{z_o}})}(z_o)} 
	\big(\big|D\power{u}{m}\big|^2 + G+|u|^{1+m}\big) \dxdt \\\nonumber
	&\le
	c\,\bigg[\biint_{Q_{4\rho_{z_o}}^{(\theta_{z_o;\rho_{z_o}})}(z_o)} 
	\big|D\power{u}{m}\big|^{2q_o} \dxdt \bigg]^{\frac{1}{q_o}} +
	c\,  \biint_{Q_{4\rho_{z_o}}^{(\theta_{z_o;\rho_{z_o}})}(z_o)} (G+|u|^{1+m}) \dxdt\\\nonumber
	&\le
	c\,\eta^{2m}\lambda^{2m} +
	c\,\Bigg[\frac{1}{\big|Q_{4\rho_{z_o}}^{(\theta_{z_o;\rho_{z_o}})}(z_o)\big|}
	\iint_{Q_{4\rho_{z_o}}^{(\theta_{z_o;\rho_{z_o}})}(z_o)\cap \boldsymbol E(r_2,\eta\lambda)} 
	\big|D\power{u}{m}\big|^{2q_o} \dxdt \Bigg]^{\frac{1}{q_o}} \\
	&\quad+
	\frac{c}{\big|Q_{4\rho_{z_o}}^{(\theta_{z_o;\rho_{z_o}})}(z_o)\big|}
	\iint_{Q_{4\rho_{z_o}}^{(\theta_{z_o;\rho_{z_o}})}(z_o)\cap \boldsymbol  F(r_2,\eta\lambda)} 
	(G+|u|^{1+m} )\dxdt ,
\end{align}
for a constant  $c=c(n,m,\nu ,L)\ge1$. 
After choosing $\eta$ in such a way that  $c\, \eta^{2m}=\frac{1}{2}$,
we can re-absorb the resulting term $\frac12\lambda^{2m}$ into the left-hand side. Furthermore, for the second last term we apply H\"older's inequality and \eqref{<lambda-1} to estimate 
\begin{align*}
	\Bigg[\frac{1}{\big|Q_{4\rho_{z_o}}^{(\theta_{z_o;\rho_{z_o}})}(z_o)\big|} &
	\iint_{Q_{4\rho_{z_o}}^{(\theta_{z_o;\rho_{z_o}})}(z_o)\cap \boldsymbol E(r_2,\eta\lambda)} 
	\big|D\power{u}{m}\big|^{2q_o} \dxdt \Bigg]^{\frac{1}{q_o}-1} \\
	&\le
	\bigg[\biint_{Q_{4\rho_{z_o}}^{(\theta_{z_o;\rho_{z_o}})}(z_o)} 
	\big|D\power{u}{m}\big|^{2} \dxdt \bigg]^{1-q_o}
	\le
	c\lambda^{2m(1-q_o)}.
\end{align*}
Inserting this into \eqref{est-lambda2m} and observing from
\eqref{<lambda-1} that 
\begin{align*}
	\lambda^{2m}
	&
	\ge
	\frac{1}{c_o^{\frac{nd(1-m)}{2(1+m)}\big(n+2+\frac2m\big)}}
	\biint_{Q_{ c_o\rho_{z_o}}^{(\theta_{z_o;\rho_{z_o}})}(z_o)} 
	\big|D\power{u}{m}\big|^2 \dxdt
\end{align*}
holds true, we deduce that
\begin{align}\label{level-est}
	\iint_{Q_{ c_o\rho_{z_o}}^{(\theta_{z_o;\rho_{z_o}})}(z_o)} 
	\big|D\power{u}{m}\big|^2 \dxdt 
	&\le
    c
    \iint_{Q_{4\rho_{z_o}}^{(\theta_{z_o;\rho_{z_o}})}(z_o)\cap \boldsymbol E(r_2,\eta\lambda)} 
	\lambda^{2m(1-q_o)}\big|D\power{u}{m}\big|^{2q_o} \dxdt \nonumber\\
	&\quad +
	c
	\iint_{Q_{4\rho_{z_o}}^{(\theta_{z_o;\rho_{z_o}})}(z_o)\cap \boldsymbol F(r_2,\eta\lambda)} 
	(G+|u|^{1+m}) \dxdt
\end{align}
with $c=c(n,m,\nu ,L)>0$, where $c_o$ is given by Lemma~\ref{lem:vitali}.

\subsection{Covering argument}
We now observe that the upper-level set $\boldsymbol E(r_1,\lambda)$
with $\lambda>N\lambda_o$ can be covered by the family
$\mathcal
F\equiv\big\{Q_{4\rho_{z_o}}^{(\theta_{z_o;\rho_{z_o}})}(z_o):
z_o\in\boldsymbol E(r_1,\lambda) \big\}$ of parabolic cylinders such that
the estimate \eqref{level-est} holds on each cylinder. In view of the Vitali type Covering Lemma \ref{lem:vitali} we gain a countable subcollection
$$
	\Big\{Q_{4\rho_{z_i}}^{(\theta_{z_i;\rho_{z_i}})}(z_i)\Big\}_{i\in\N}
	\subset \mathcal F
$$
of pairwise disjoint parabolic cylinders with  
$$
	\boldsymbol E(r_1,\lambda)
	\subset 
	\bigcup_{i=1}^\infty Q_{c_o\rho_{z_i}}^{(\theta_{z_i;\rho_{z_i}})}(z_i)
	\subset
	Q_{r_2}.
$$
This implies that
\begin{align}
	\iint_{\boldsymbol E(r_1,\lambda)} 
	\big|D\power{u}{m}\big|^2 \dxdt
	&\le
	\sum_{i=1}^\infty
	\iint_{Q_{c_o\rho_{z_i}}^{(\theta_{z_i;\rho_{z_i}})}(z_i)} 
	\big|D\power{u}{m}\big|^2 \dxdt. \nonumber
\end{align}
Applying \eqref{level-est} and observing that the cylinders $Q_{4\rho_{z_i}}^{(\theta_{z_i;\rho_{z_i}})}(z_i)$ are pairwise disjoint we obtain
\begin{align*}
	\iint_{\boldsymbol E(r_1,\lambda)} 
	\big|D\power{u}{m}\big|^2 \dxdt
	&\le
	c\!\iint_{\boldsymbol E(r_2,\eta\lambda)} \!
	\lambda^{2m(1-q_o)}\big|D\power{u}{m}\big|^{2q_o} \dxdt \\\nonumber
	&\qquad+
	c\!\iint_{\boldsymbol F(r_2,\eta\lambda)} \!
	(G+|u|^{1+m}) \dxdt.
\end{align*}
To compensate for the different level sets appearing on both sides of the previous inequality, 
we use the fact that $|D\power um|^2\le\lambda^{2m}$ holds a.e. on
$\boldsymbol E(r_1,\eta\lambda)\setminus \boldsymbol E(r_1,\lambda)$,
which yields the bound
\begin{align*}
	\iint_{\boldsymbol E(r_1,\eta\lambda)\setminus \boldsymbol E(r_1,\lambda)} 
	\big|D\power{u}{m}\big|^2 \dxdt
	&\le
	\iint_{\boldsymbol E(r_2,\eta\lambda)} 
	\lambda^{2m(1-q_o)}\big|D\power{u}{m}\big|^{2q_o} \dxdt.
\end{align*}
We now add the last two inequalities to get
\begin{align*}
	\iint_{\boldsymbol E(r_1,\eta\lambda)} 
	\big|D\power{u}{m}\big|^2 \dxdt 
	&\le
	c\iint_{\boldsymbol E(r_2,\eta\lambda)} 
	\lambda^{2m(1-q_o)}\big|D\power{u}{m}\big|^{2q_o} \dxdt\\
	&\qquad +
	c\iint_{\boldsymbol F(r_2,\eta\lambda)} (G+|u|^{1+m}) \dxdt.
\end{align*}
Replacing  $\eta\lambda$ by $\lambda$ and recalling that $\eta<1$ depends only on $n,m,\nu$, and $L$, we eventually obtain the reverse H\"older type inequality
\begin{align}\label{pre-1}
	\iint_{\boldsymbol E(r_1,\lambda)} 
	\big|D\power{u}{m}\big|^2 \dxdt
	&\le
	c\iint_{\boldsymbol E(r_2,\lambda)} 
	\lambda^{2m(1-q_o)}\big|D\power{u}{m}\big|^{2q_o} \dxdt  \nonumber\\
	&\qquad+
	c \iint_{\boldsymbol F(r_2,\lambda)} (G+|u|^{1+m}) \dxdt
\end{align}
for any $\lambda\ge \lambda_1:= \eta N\lambda_o$.

\subsection{The desired gradient estimate}
For any $\ell\ge \lambda_1$ we first define 
the {\it truncation} of $|D\power{u}{m}|$ by
$$
	\big|D\power{u}{m}\big|_\ell
	:=
	\min\big\{\big|D\power{u}{m}\big|, \ell^m\big\},
$$
and for any $\rho\in(0,2R]$ the corresponding upper level set
$$
	\boldsymbol E_\ell(\rho,\lambda)
	:=
	\big\{z\in Q_\rho: \big|D\power{u}{m}\big|_\ell>\lambda^{m}\big\}.
$$
For all $\ell\ge\lambda_1$ we then calculate 
\begin{align}\label{est-1}
\iint_{Q_{r_1}}|D\power um|_\ell^{2-2q_o+\tau}|D\power um|^{2q_o}\dxdt
&\le
\lambda_1^{\tau m}\iint_{Q_{r_1}\setminus \boldsymbol E_\ell(r_1,\lambda_1)} |D\power um|_\ell^{2-2q_o}|D\power um|^{2q_o}\dxdt\nonumber\\
&\qquad+
\iint_{\boldsymbol E_\ell(r_1,\lambda_1)}|D\power um|_\ell^{2-2q_o+\tau}|D\power um|^{2q_o}\dxdt,
\end{align}
where $\tau\in(0,1)$ will be chosen later depending only on $n,m,\nu$, and $L$.
Since Fubini's theorem implies 
\begin{align}
&\iint_{\boldsymbol E_\ell(r_1,\lambda_1)}|D\power um|_\ell^{2-2q_o+\tau}|D\power um|^{2q_o}\dxdt\nonumber\\
&=
\lambda_1^{\tau m}\iint_{\boldsymbol E_\ell(r_1,\lambda_1)}|D\power um|_\ell^{2-2q_o}|D\power um|^{2q_o}\dxdt\nonumber\\
&\qquad+
\int_{\lambda_1}^\ell \tau m \lambda^{\tau m-1}\bigg[\iint_{\boldsymbol E_\ell(r_1,\lambda)}|D\power um|_\ell^{2-2q_o}|D\power um|^{2q_o}\dxdt\bigg]\d\lambda,\nonumber
\end{align}
we infer from \eqref{est-1} that
\begin{align}\label{est-1-1}
&\iint_{Q_{r_1}}|D\power um|_\ell^{2-2q_o+\tau}|D\power um|^{2q_o}\dxdt\nonumber\\
&\quad\le
\lambda_1^{\tau m}\iint_{Q_{r_1}}|D\power um|^{2}\dxdt
+
\int_{\lambda_1}^\ell\tau m\lambda^{\tau m-1}\bigg[\iint_{\boldsymbol E(r_1,\lambda)}|D\power um|^{2}\dxdt\bigg]\d\lambda.
\end{align}
Here, we used the facts $|D\power um|_\ell\le|D\power um|$ a.e. and
$\boldsymbol E_\ell(r_1,\lambda)=\boldsymbol E(r_1,\lambda)$ for
$\lambda<\ell$.
For the estimate of the last integral from the previous estimate we
employ \eqref{pre-1}. Furthermore, we again use $\boldsymbol
E_\ell(r_1,\lambda)=\boldsymbol E(r_1,\lambda)$ for $\lambda<\ell$
and Fubini's theorem. In this way, we obtain
\begin{align*}
&\int_{\lambda_1}^\ell\tau m\lambda^{\tau m-1}\bigg[\iint_{\boldsymbol E(r_1,\lambda)}|D\power um|^{2}\dxdt\bigg]\d\lambda\\
&\le
c\tau\iint_{\boldsymbol E_\ell(r_2,\lambda_1)}|D\power um|^{2q_o}
\bigg[ \int_{\lambda_1}^{|D\power um|_\ell^\frac1m} \lambda^{m(2-2q_o+\tau)-1}\d\lambda\bigg]\dxdt\\
&\qquad+
c\tau\iint_{\boldsymbol F(r_2,\lambda_1)}(G+|u|^{1+m}) \bigg[ \int_{\lambda_1}^{(G+|u|^{1+m})^\frac1{2m}} \lambda^{\tau m -1}\d\lambda\bigg]\dxdt\\
&\le
\frac{c\tau}{1-q_o}\iint_{Q_{r_2}}|D\power um|_\ell^{2-2q_o+\tau}|D\power um|^{2q_o}\dxdt\\
&\qquad+
c\iint_{Q_{r_2}}(G+|u|^{1+m})^{1+\frac\tau2}\dxdt.
\end{align*}
Inserting this into \eqref{est-1-1}, we deduce 
\begin{align}
&\iint_{Q_{r_1}}|D\power um|_\ell^{2-2q_o+\tau}|D\power um|^{2q_o}\dxdt\nonumber\\
&\qquad\le
\frac{c_o\tau}{1-q_o}\iint_{Q_{r_2}}|D\power um|_\ell^{2-2q_o+\tau}|D\power um|^{2q_o}\dxdt\nonumber\\
&\qquad\qquad+
\lambda_1^{\tau m}\iint_{Q_{r_2}}|D\power um|^2\dxdt
+
c\iint_{Q_{r_2}}(G+|u|^{1+m})^{1+\frac\tau2}\dxdt
\end{align}
for some $c_o=c_o(n,m,\nu,L)\ge1$.
The second last term can be estimated by means of
\begin{equation*}
  \lambda_1=(\eta N\lambda_o)\le
  c\bigg(\frac{4c_oR}{r_2-r_1}\bigg)^{\frac{d(n+2)(1+m)}{4m^2}}\lambda_o.
\end{equation*}
We now choose $\tau>0$ small enough to ensure
\begin{align*}
0<\tau\le\min\Big\{\tau_o,\gamma, \frac{4m}{1+m}+\frac4n-2\Big\}
\quad\mbox{where $\tau_o:=\frac{1-q_o}{2c_o}<1.$}
\end{align*}
For these values of $\tau$, the preceding estimate implies 
\begin{align}\label{before-iteration}\nonumber
&\iint_{Q_{r_1}}|D\power um|_\ell^{2-2q_o+\tau}|D\power um|^{2q_o}\dxdt\\
&\qquad\le
\frac12\iint_{Q_{r_2}}|D\power um|_\ell^{2-2q_o+\tau}|D\power um|^{2q_o}\dxdt\nonumber\\
&\qquad\qquad+
c\Big(\frac{R}{r_2-r_1}\Big)^{\frac{d(n+2)(1+m)}{4m}}\lambda_o^{\tau m}\iint_{Q_{2R}}|D\power um|^2\dxdt\nonumber\\
&\qquad\qquad+
c\iint_{Q_{2R}}(G+|u|^{1+m})^{1+\frac\tau2}\dxdt
\end{align}
for all $r_1$ and $r_2$ with $R\le r_1<r_2\le 2R$.
Here, we point out that the right-hand side is finite by the choice of
$\tau$, assumption~\eqref{def-G} and the property
$u\in L^{2m+\frac{2(1+m)}n}$ that follows from $u\in K_\psi$, cf. \eqref{integrability-Kpsi}.
Next, we employ Lemma~\ref{lem:tech} to eliminate the first term on
the right-hand side of \eqref{before-iteration}. Then we pass to the
limit $\ell\rightarrow \infty$ and apply Fatou's Lemma to obtain
\begin{align*}
\iint_{Q_{R}}|D\power um|^{2+\tau}\dxdt
\le
c\lambda_o^{\tau m}\iint_{Q_{2R}}|D\power um|^2\dxdt
+
c\iint_{Q_{2R}}(G+|u|^{1+m})^{1+\frac\tau2}\dxdt.
\end{align*}
In order to estimate $\lambda_o$, we recall \eqref{def:lambda_o},
adopt the energy bound~\eqref{energy:a=0} from Lemma~\ref{lem:energy} with $\theta=1$ and use Young's inequality to deduce
\begin{align*}
\lambda_o
\le
c\Bigg[ \biint_{Q_{8R}} \bigg(\frac{|u|^{1+m}}{R^\frac{1+m}m}+G+1\bigg)\dxdt\Bigg]^\frac{d}{2m}.
\end{align*}
We then combine the two preceding estimates, which leads to
\begin{align*}
&\biint_{Q_{R}}|D\power um|^{2+\tau}\dxdt\\
&\qquad\le
c\Bigg[ \biint_{Q_{8R}} \bigg(\frac{|u|^{1+m}}{R^\frac{1+m}m}+G+1\bigg)\dxdt\Bigg]^\frac{d\tau }{2} \biint_{Q_{2R}}|D\power um|^2\dxdt\\
&\qquad\qquad+
c\,\biint_{Q_{2R}}(G+|u|^{1+m})^{1+\frac\tau2}\dxdt.
\end{align*}
To obtain the desired estimate \eqref{desired-est-1}, we cover
$Q_R(z)$ by finitely many cylinders of radius $\frac{R}8$ and then apply the
previous estimate on each of the smaller cylinders. After summing up, we
deduce the asserted estimate~\eqref{desired-est-1}. This completes the
proof of Theorem~\ref{thm:main}.
\end{proof}

\end{document}